\newtheorem{corollary}{Corollary}[section]
\newtheorem{lemma}[corollary]{Lemma}
\newtheorem{proposition}[corollary]{Proposition}
\newtheorem{theorem}[corollary]{Theorem}
\theoremstyle{definition}
\newtheorem{remark}[corollary]{Remark}
\numberwithin{equation}{section}
\DeclareMathOperator*{\esssup}{esssup}
\DeclareMathOperator*{\essinf}{essinf}
\def\diam {\mathop {\rm diam}\nolimits}
\def\Lip {\mathop {\rm Lip}\nolimits}
\def\loc {\mathop {\rm loc}\nolimits}
\def\skw {\mathop {\rm skew}\nolimits}
\def\sym {\mathop {\rm sym}\nolimits}
\def\div {\mathop {\rm div}\nolimits}
\def\cof {\mathop {\rm cof}\nolimits}
\def\dist {\mathop {\rm dist}\nolimits}
\def\det {\mathop {\rm det}\nolimits}
\def\tr {\mathop {\rm tr}\nolimits}
\def\de {\mathrm{d}}
\def\R {\mathbb R}
\def\N {\mathbb N}
\def\e {\mathrm{e}}
\title[Geometric rigidity on Sobolev spaces with variable exponent]{Geometric rigidity on Sobolev spaces with variable exponent and applications}
\author[S. Almi]{Stefano Almi}
\address[Stefano Almi]{Department of Mathematics and Applications ``R.~Caccioppoli'', University of Naples Federico II, Via Cintia, Monte S. Angelo, 80126 Napoli, Italy.}
\email{stefano.almi@unina.it}
\author[M. Caponi]{Maicol Caponi}
\address[Maicol Caponi]{Department of Mathematics and Applications ``R.~Caccioppoli'', University of Naples Federico II, Via Cintia, Monte S. Angelo, 80126 Napoli, Italy.}
\email{maicol.caponi@unina.it}
\author[M. Friedrich]{Manuel Friedrich} 
\address[Manuel Friedrich]{Department of Mathematics, Friedrich-Alexander Universit\"at Erlangen-N\"urnberg. Cauerstr.~11,
D-91058 Erlangen, Germany, \& Mathematics M\"{u}nster, 
University of M\"{u}nster, Einsteinstr.~62, D-48149 M\"{u}nster, Germany}
\email{manuel.friedrich@fau.de}
\author[F. Solombrino]{Francesco Solombrino}
\address[Francesco Solombrino]{Department of Mathematics and Applications ``R.~Caccioppoli'', University of Naples Federico II, Via Cintia, Monte S. Angelo, 80126 Napoli, Italy.}
\email{francesco.solombrino@unina.it}
\date{}
\begin{document}

\subjclass[2020]{74B20, %Nonlinear elasticity
49J45, % Methods involving semicontinuity and convergence; relaxation
46E30. %Spaces of measurable functions (Lp-spaces, Orlicz spaces, Köthe function spaces, Lorentz spaces, rearrangement invariant spaces, ideal spaces, etc.
}

\keywords{Rigidity estimates, Korn inequality, variable exponent, mixed growth, nonlinear and linear elasticity, $\Gamma$-convergence}

\maketitle

%---------------
% Introduction
%---------------

\begin{abstract}
We present extensions of rigidity estimates and of Korn's inequality to the setting of (mixed) variable exponents growth. The proof techniques, based on a classical covering argument, rely on the log-H\"older continuity of the exponent to get uniform regularity estimates on each cell of the cover, and on an extension result {\em \`a la} {\sc Nitsche} in Sobolev spaces with variable exponents. As an application, by means of $\Gamma$-convergence we perform a passage from nonlinear to linearized elasticity under variable subquadratic energy growth far from the energy well. 
\end{abstract}

%-----------------------------------------------------
% Lebesgue and Sobolev spaces with variable exponent
%-----------------------------------------------------

\section{Introduction}

{\sc Liouville}'s rigidity result states that smooth mappings are necessarily affine if their gradient is a rotation everywhere. After several qualitative generalizations over the last  decades~\cite{John:1961, kohn-rig, neff.muench, Reshetnyak:1961}, a breakthrough advancement was obtained by {\sc
Friesecke, James, and M\"uller}~\cite{FJM} with a quantitative geometric rigidity estimate in nonlinear elasticity theory. In its basic form, it states that for a sufficiently smooth bounded domain $\Omega \subset \R^n$ and mappings $y \in H^1(\Omega;\R^n)$ there exists a corresponding rotation $R \in SO(n)$ such that
\begin{align}\label{eq: rigidity}
\int_\Omega |\nabla y - R |^2 \, {\rm d}x \le C \int_\Omega\dist^2\big(\nabla y,SO(n)\big) \, {\rm d} x
\end{align}
for a constant $C>0$ only depending on $\Omega$. Subsequently, the result was generalized to general exponents $p \in (1,\infty)$, see~\cite{conti.schweizer}, and to settings of mixed growth~\cite{CDM} stating that for a decomposition
$$\dist\big(\nabla y,SO(n)\big) = f + g \quad \quad \text{a.e.\ for some $f \in L^p(\Omega)$ and $g \in L^q(\Omega)$} $$
for $1 < p < q < \infty$ there exists a corresponding rotation $R \in SO(n)$ and $F \in L^p(\Omega;\R^{n \times n})$, $G \in L^q(\Omega;\R^{n \times n})$ such that 
\begin{align}\label{eq:rigidity2}
\text{$\nabla y - R = F + G$ \ a.e., } \quad \quad \Vert F \Vert_{L^p(\Omega)} \le C\Vert f \Vert_{L^p(\Omega)}, \ \ \Vert G \Vert_{L^q(\Omega)} \le C\Vert g \Vert_{L^q(\Omega)}. 
\end{align}
(For $g =G = 0$ and $p=2$,~\eqref{eq:rigidity2} reduces to~\eqref{eq: rigidity}.) We point out that the rigidity estimate~\eqref{eq: rigidity} is a nonlinear version of Korn's inequality, which allows to estimate from below the $L^{p}$-norm of the symmetrized gradient (i.e., the distance from infinitesimal rotations) with the $L^{p}$-distance of the whole gradient from a single skew-symmetric matrix. Also Korn's inequality has been generalized to the mixed growth setting and is indeed a crucial ingredient to prove~\eqref{eq:rigidity2}, see~\cite{CDM}.

From an applicative perspective, estimate~\eqref{eq: rigidity} has proved to play a pivotal role in the analysis of variational models in nonlinear elasticity, for it delivers compactness for sequences of deformations and corresponding displacements with uniformly bounded elastic energy. In fact, it is the cornerstone for rigorous derivations of lower dimensional theories for plates, shells, and rods in various scaling regimes~\cite{FJM, hierarchy, Lewicka, Mora4, Mora, Mora2}, and for providing relations between geometrically nonlinear and linear models in elasticity~\cite{DalMasoNegriPercivale:02}. In~\cite{AgDMDS,Mora-Riva}, the version with mixed exponents~\eqref{eq:rigidity2} is used to prove strong convergence of recovery sequences.

The estimate~\eqref{eq: rigidity} was generalized in various directions. Without being exhaustive, we mention results for incompatible fields~\cite{conti.garroni, lauteri.luckhaus, Muller-Scardia-Zeppieri:14}, settings involving multiple energy wells~\cite{Chaudhuri, Chermisi-Conti, conti.schweizer, davoli.friedrich, De Lellis, Jerrard-Lorent, Lorent}, and estimates in the realm of free-discontinuity problems 
~\cite{Chambolle-Giacomini-Ponsiglione:2007, KFZ:2021, Friedrich-Schmidt:15} where deformations can exhibit discontinuities. Based on these results, among others, the studies on linearization of nonlinearly elastic energies have been successfully extended in several ways, including incompressible materials~\cite{Jesenko-Schmidt:20, edo}, the passage from atomistic-to-continuum models~\cite{Braides-Solci-Vitali:07, KFZ:2022, Schmidt:2009}, multiwell energies~\cite{alicandro.dalmaso.lazzaroni.palombaro, davoli2, Schmidt:08}, plasticity~\cite{Ulisse}, thermoviscoelasticity~\cite{Rufat, MFMK}, fracture~\cite{Friedrich:15-2, higherordergriffith}, or elastic thin films~\cite{KFZ:2021, KrePio19}.

 The goal of this article is to extend the estimates~\eqref{eq: rigidity}--\eqref{eq:rigidity2} to the setting of variable exponents and to generalize results on the rigorous connection between nonlinear and linearized models in elasticity theory to this framework. Therefore, our analysis has both a mathematical and a modeling interest. 

From a theoretical standpoint, Korn's inequality has already been extended to the variable exponent setting~\cite{DHHR} and it is thus a natural question if a version with mixed variable growth holds and if it can be used to prove a rigidity estimate in the Sobolev space $W^{1,p(x)}$. In particular, the latter may still be interpreted as a quantitative version of {\sc Liouville}'s and {\sc Reshetnyak}'s rigidity results, which are then extended to the variable exponent framework. 

From a modeling point of view, variable exponents are customary in continuum mechanics for describing the behaviors of fluids, which change their mechanical response according to an external electric field, see \cite{Diening} for further details. This is a point of view which may be also reasonable for elastic materials. Furthemore, variable growth conditions can also account for heterogenity.  In this context, we remark that a large class of compressible rubber-like materials are described in terms of energy densities growing quadratically close to the energy wells and less than quadratically far away from the wells. The qualitative description of such materials, however, usually relies on linearized approximations, whose range of validity has to be understood. The passage from a nonlinear to a linearized model for a fixed exponent~$1 < p \leq 2$ was considered in~\cite{AgDMDS}, while a variable exponent $1 < p(x) \leq 2$ can further capture the behavior of composite materials characterized by a strong anisotropy~\cite{zikov3, zikov}. In the framework of $\Gamma$-convergence~\cite{DalMaso93AnIntroduction}, when passing from a nonlinear elastic energy to its linear counterpart, one aims at proving the convergence of minimizers of the nonlinear energies to the minimizers of the linearized limit model in a suitable Sobolev space. By comparison with the arguments of~\cite{AgDMDS} (see also~\cite{Schmidt:08}) for the fixed exponent setting, geometric rigidity estimates are expected to be vital to obtain convergence in~$W^{1,p(x)}$. 

In order to better explain the role played by the variable exponent $p(\cdot)$, we notice that, for a fixed exponent~$p$, inequality~\eqref{eq: rigidity} can be equivalently written as an inequality between norms
\begin{equation}
\label{eq: rigidity-3}
\| \nabla y - R \|_{p } \le C \big\| \dist \big(\nabla y,SO(n)\big) \big\|_{p}\,.
\end{equation}
In the Sobolev space $W^{1,p(x)}(\Omega)$, instead, it is important to decide whether to express the rigidity estimate in terms of the norm $\| \cdot\|_{p(x)}$ or of the modular~$\int_{\Omega}|\cdot|^{p(x)}$. Indeed, while inducing the same topologies, the two quantities cannot be mutually controlled, as the modular lacks of homogeneity. In particular, some basic functional inequalities simply do not hold for the modular while being true for the norm~$\| \cdot\|_{p(x)}$. In this respect, we refer for instance to~\cite[Example 8.2.7]{DHHR} for an explicit counterexample to Poincar\'e's inequality. In the same spirit, we notice that many fundamental tools necessary in the analysis of Sobolev spaces with variable exponent, such as estimates on the maximal operator, regularity of elliptic partial differential equations, and Korn's inequalities, are expressed by means of the norm~\cite{DHHR}. Thus, it is natural to express rigidity in terms of norms~\eqref{eq: rigidity-3} rather than for the modular~\eqref{eq: rigidity}. Moreover, due to the one-homogeneity of the norm, a rigidity estimate in $W^{1,p(x)}$ expressed in the form~\eqref{eq: rigidity-3} is convenient for the study of convergence of minima and minimizers in the linearization process, where deformations are multiplied by a singular prefactor~$\frac{1}{\varepsilon}$.

The strategy of the proof of our rigidity estimates follows closely the classical ones of~\cite{FJM} and of~\cite{CDM}. It is based on a covering argument, where the required estimate is first recovered on small cubes and then extended to the whole domain. The adaptation of this proof strategy to the variable exponent setting presents several technical issues. First, a uniform control (with respect to the size of the cube) of the constants appearing in the local rigidity estimate on small cubes (Proposition~\ref{prop:Qpxrig}) is crucial to derive the global result. Secondly, some technical tools have to be specified to our more general setting. This is for instance the case of an extension result in~$W^{1,p(x)}$ for functions with mixed growth in the symmetric gradient (see Theorem~\ref{thm:extension}), where {\sc Nitsche}'s techniques~\cite{nitsche} are used to find a suitable extension of the exponent as well. We also remark that the classical Lusin approximation argument has to be formulated and used in a slightly different way than in~\cite{FJM}. In our case, continuity of the maximal operator is explicitly invoked to estimate the $L^{p(\cdot)}$-norm of the exceptional set, which is indeed a sublevel set of the maximal function. In all these aspects, a fundamental role is played by a suitable continuity assumption on the exponent.

The above mentioned key assumption on~$p(\cdot)$ is the so-called {\em log-H\"older continuity} (see, e.g.,~\cite{DHHR}), which provides~$p(\cdot)$ with an explicit modulus of continuity and is particularly fit for blow-up and localization methods. Such condition is by now customary in the variable exponent framework. Besides the reference book~\cite{DHHR}, we mention~\cite{zikov2}, where the log-H\"older continuity of the exponent prevents from the Lavrentiev's phenomenon, the works~\cite{acerbi2, acerbi3, MR3209686, MR2391646} for the regularity of minimizers of functionals with~$p(\cdot)$-growth and of solutions to elliptic partial differential equations with general growth, and~\cite{Alm-Reg-Sol_23, DCLVlscvariable, Leo-Sci-Sol-Ver_23, Sci-Sol-Str_22} for integral representation, lower-semicontinuity, $\Gamma$-convergence, and regularity issues for free discontinuity functionals. 

In our setting, the log-H\"older continuity is essential to keep track of and suitably control the constants appearing in our arguments. When proving the rigidity estimate in Section~\ref{sec:2}, we first provide a localized version of the desired inequality and then apply a local-to-global argument (cf.~Proposition~\ref{prop:locglob}) which indeed builds upon the log-H\"older condition. In Section~\ref{sec:3}, with similar ideas, we prove a Korn's inequality with mixed variable growth, which then leads to the generalization of~\eqref{eq:rigidity2} to the variable exponent case. We refer to the proof of Theorem~\ref{thm:pxqxkorn} for full details. A further technical ingredient needed in both our rigidity estimates is a Lusin approximation (see Lemma~\ref{lem:lusin}), which must be reformulated in a slightly different form compared to~\cite{FJM} due to the non-equivalence of norm and modular in~$W^{1,p(x)}$. In particular, we make use of the Lusin approximation in combination with continuity estimates for the maximal operator in Lebesgue spaces with variable exponents, which again hold under the log-H\"older continuity of~$p(\cdot)$. We refer to Remark~\ref{rem:on condition (ii)} and to the proofs of Proposition~\ref{prop:Q0pxrig} and of Theorem~\ref{thm:pxqxrig} for a complete discussion. 

The main results of our paper are the rigidity estimate for variable exponents (Theorem~\ref{thm:pxrig}), a Korn inequality, and a rigidity estimate with mixed variable growth (Theorems~\ref{thm:pxqxkorn} and~\ref{thm:pxqxrig}, respectively). In Section~\ref{sec:4}, as an application, we generalize the results of~\cite{AgDMDS, DalMasoNegriPercivale:02} on the passage from nonlinear to linearized elasticity with variable subquadratic growth far from the energy well. In this regard, we notice that the $\Gamma$-convergence stated in Theorem~\ref{thm:gamma_con} does not need the regularity of the exponent~$p(\cdot)$, as compactness and the construction of a recovery sequence rely on a weaker rigidity for the modular (see Theorem~\ref{thm:gpxrig}) and on the arguments of~\cite{AgDMDS}. This is important from an applicative point of view as it allows for the modeling of composite materials. The rigidity results of Theorems~\ref{thm:pxrig} and~\ref{thm:pxqxrig} are instead crucial to prove strong convergence of minimizers in~$W^{1,p(x)}$. 

We close the introduction with some final remarks. Firstly, in order to emphasize the role played by the variable exponent, in our presentation we have gone for a higher level of detail than other works on related themes, even if some of our arguments are quite standard by now. Secondly, we believe that our results can be the starting point for more sophisticated rigidity estimates. Indeed, in~\cite{CDM}, the mixed growth rigidity for fixed exponents can be generalized to estimates in Lorentz spaces. This in turn is a key tool for proving a version for incompatible fields~\cite{Muller-Scardia-Zeppieri:14} leading to numerous applications in strain-gradient plasticity. Extensions to the setting of variable exponents will be subject of future research. 

%-----------------------------------------------------
% Lebesgue and Sobolev spaces with variable exponent
%-----------------------------------------------------

\section{Notations and preliminary results}

\subsection{Notation}

The space of $d\times n$ matrices with real entries is denoted by $\R^{d\times n}$. Given two matrices $A_1,A_2\in \R^{d\times n}$, their scalar product is denoted by $A_1\colon A_2$ and the induced norm of $A\in\R^{d\times n}$ by $|A|$. In the case $d=n$, the subspace of symmetric matrices is denoted by $\R^{n\times n}_{\sym}$, and the subspace of skew-symmetric matrices by $\R^{n\times n}_{\skw}$. Given $A\in\R^{n\times n}$, we denote by $A_{\sym}:=\frac{1}{2}(A+A^T)\in\R^{n\times n}_{\sym}$ its symmetric part, and by $A_{\skw}:=A-A_{\sym}=\frac{1}{2}(A-A^T)\in\R^{n\times n}_{\skw}$ its skew-symmetric part. We use $SO(n)$ to denote the special orthogonal group in $\R^n$, consisting of all matrices $A\in\R^{n\times n}$ satisfying $A^{-1}=A^T$ and $\det A=1$. We denote by $d(A, SO(n))$ the Euclidean distance of $A\in\R^{n\times n}$ from $SO(n)$. The identity matrix is denoted by $I \in \R^{n \times n}$. 

For a measurable set $E \subseteq \R^n$ we use $|E|$ to denote the $n$-dimensional Lebesgue measure of $E$. By $E^c := \R^n\setminus E$ we indicate its complement and by $\diam(E)$ its diameter. By $\chi_E \colon E \to \{ 0 ,1 \}$ we denote the corresponding characteristic function. An open and connected set $\Omega\subseteq\R^n$ is called domain. 

Given an open subset $\Omega$ of $\R^n$, the set of all distributions on $\Omega$, namely the continuous dual space of $C_c^\infty (\Omega;\R^d)$, endowed with the strong dual topology, is denoted by $\mathcal{D}'(\Omega;\R^d)$. We adopt standard notation for Lebesgue spaces on measurable subsets $E\subseteq\mathbb R^n$ and Sobolev spaces on open subsets $\Omega\subseteq\mathbb R^n$. Given $u\in L^1(\Omega;\R^d)$ we denote by $\langle u\rangle_\Omega\in\R^d$ its mean value on $\Omega$, i.e., $\langle u\rangle_\Omega:=\frac{1}{|\Omega|}\int_\Omega u(x)\,\de x$. According to the context, we use $\|\cdot\|_{L^p(E)}$ to denote the norm in $L^p(E;\R^d)$ for every $1\le p\le\infty$ and $d\in\N$. A similar convention is also used to denote the norms in Sobolev spaces. The boundary values of a Sobolev function are always intended in the sense of traces.

The partial derivatives with respect to the variable $x_i$ are denoted by $\partial_i$. Given an open subset $\Omega\subseteq\mathbb R^n$ and a function $u\colon\Omega\to\R^d$, we denote its Jacobian matrix by $\nabla u$, whose components are $(\nabla u)_{ij}:=\partial_j u_i$ for $i=1,\dots,d$ and $j=1,\dots,n$. We set $\nabla^2 u:=\nabla(\nabla u)$ and we use $\Delta u$ to denote the Laplacian of $u$, which is defined as $\Delta u:=\sum_{i=1}^d\partial^2_{ii} u $. For a function $u\colon \Omega \to \R^n$ we use $eu:= (\nabla u)_{\sym}$ to denote the symmetric part of the gradient. Given a tensor field $F\colon \Omega\to\R^{d\times n}$, by $\div F$ we mean its divergence with respect to lines, namely $(\div F)_i:=\sum_{j=1}^n\partial_jF_{ij}$ for $i=1,\dots, d $. 

We use the convention that constants may change from line to line. We will frequently emphasize the explicit dependence of the constants on the parameters for the sake of clarity. 

\subsection{Lebesgue and Sobolev spaces with variable exponent and their properties}

In the following, we introduce the notions of Lebesgue and Sobolev spaces with variable exponent, and we state the main properties that will be used throughout the paper. For more information regarding these spaces, we refer to the book~\cite{DHHR} and the references therein. 

Let $E\subseteq\R^n$ be a measurable set and let $p\colon E\to \R$ be a measurable function. For all measurable subsets $F\subseteq E$ we define
$$p^-_F:= \essinf_{x\in F} p(x)\quad\text{and}\quad p^+_F:=\esssup_{x\in F}p(x),$$
and in the case $F=E$ we simply write $p^-$ and $p^+$. We set
$$\mathcal P_b(E):=\{p\colon E\to[1,\infty)\,:\,\text{$p$ is measurable with $p^+<\infty$}\}.$$ 
Given a function $p\in\mathcal P_b(E)$, 
the Lebesgue space with variable exponent $p$ is defined as
$$L^{p(\cdot)}(E;\R^d):=\left\{f\colon E\to\R^d\text{ measurable}\,:\,\int_E|f(x)|^{p(x)}\,\de x<\infty\right\}.$$
This is a Banach space endowed with the norm
$$\|f\|_{L^{p(\cdot)}(E)}:=\inf\left\{\lambda>0\,:\,\int_{E}\left|\frac{f(x)}{\lambda}\right|^{p(x)}\,\de x\le 1\right\}.$$
By definition, the norm $\|\cdot\|_{L^{p(\cdot)}(E)}$ satisfies the following properties:
\begin{align*}
&\|f+g\|_{L^{p(\cdot)}(E)}\le \|f\|_{L^{p(\cdot)}(E)}+\|g\|_{L^{p(\cdot)}(E)}\quad\text{for all $f,g\in L^{p(\cdot)}(E;\R^d)$},\\
&\|f\|_{L^{p(\cdot)}(F)}=\|f\chi_F\|_{L^{p(\cdot)}(E)}\quad\text{for all measurable sets $F\subseteq E$ and $f\in L^{p(\cdot)}(E;\R^d)$},\\
&\||f|^s\|_{L^{p(\cdot)}(E)}=\|f\|_{L^{sp(\cdot)}(E)}^s\quad\text{for all $s\in [0,\infty)$ with $sp\in \mathcal P_b(E)$ and $f\in L^{sp(\cdot)}(E;\R^d)$},\\
&\|f\|_{L^{p(\cdot)}(E)}\le \|g\|_{L^{p(\cdot)}(E)}\quad\text{for all $f,g\in L^{p(\cdot)}(E;\R^d)$ with $|f|\le |g|$ a.e.\ in $E$}.
\end{align*}
The quantity $\int_E|f(x)|^{p(x)}\,\mathrm{d}x$ is often referred to as the \emph{modular} of $f$ and denoted by $\varrho_{p(\cdot)}(f)$. We recall the following results for $L^{p(\cdot)}(E;\R^d)$, see for example~\cite{DHHR}.

\begin{proposition}[{\cite[Lemma~3.2.5 and Lemma~3.4.2]{DHHR}}]\label{prop:modular}
Let $p\in\mathcal P_b(E)$ and let $f\in L^{p(\cdot)}(E;\R^d)$. 
\begin{itemize}
\item[(i)] If $\|f\|_{L^{p(\cdot)}(E)}\le 1$, then
$$\|f\|_{L^{p(\cdot)}(E)}^{p^+}\le \int_{E}|f(x)|^{p(x)}\,\de x\le \|f\|_{L^{p(\cdot)}(E)}^{p-}.$$
\item [(ii)] If $\|f\|_{L^{p(\cdot)}(E)}\ge 1$, then
$$ \|f\|_{L^{p(\cdot)}(E)}^{p^-} \le \int_{E}|f(x)|^{p(x)}\,\de x\le \|f\|_{L^{p(\cdot)}(E)}^{p^+}.$$
\end{itemize}
In particular,
$$\|f\|_{L^{p(\cdot)}(E)}\le 1\quad\Longleftrightarrow\quad\int_{E}\left|f(x)\right|^{p(x)}\,\de x\le 1,\quad \|f\|_{L^{p(\cdot)}(E)}=1\quad\Longleftrightarrow\quad\int_{E}|f(x)|^{p(x)}\,\de x= 1.$$
\end{proposition}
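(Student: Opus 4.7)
The plan is to derive both (i) and (ii) directly from the defining infimum property of the Luxemburg norm, by exploiting the elementary pointwise comparisons between $\lambda^{p(x)}$ and $\lambda^{p^\pm}$ according to the position of $\lambda$ with respect to $1$, and then letting $\lambda$ tend to $a := \|f\|_{L^{p(\cdot)}(E)}$. The case $f \equiv 0$ (in which both sides of every inequality vanish) is trivial, so I may assume $a>0$.

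The two key inputs from the definition of $a$ as an infimum are: $\int_E|f(x)/\lambda|^{p(x)}\,\de x \le 1$ for every $\lambda > a$, and $\int_E|f(x)/\lambda|^{p(x)}\,\de x > 1$ for every $\lambda\in(0,a)$ (otherwise such a $\lambda$ would lie in the set defining the infimum and undercut $a$). I will couple these with the algebraic identity $\varrho_{p(\cdot)}(f) = \int_E\lambda^{p(x)}\, |f(x)/\lambda|^{p(x)}\,\de x$ together with the fact that, when $\lambda\le 1$, one has $\lambda^{p^+}\le \lambda^{p(x)}\le\lambda^{p^-}$, whereas when $\lambda\ge 1$ one has $\lambda^{p^-}\le \lambda^{p(x)}\le\lambda^{p^+}$.

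For (i), under the assumption $a\le 1$, I take $\lambda\in(a,1]$ and use $\lambda^{p(x)}\le\lambda^{p^-}$ together with the first input to get $\varrho_{p(\cdot)}(f)\le\lambda^{p^-}$; letting $\lambda\downarrow a$ yields the upper estimate $\varrho_{p(\cdot)}(f)\le a^{p^-}$. For the lower estimate I take $\lambda\in(0,a)$ and apply $\lambda^{p(x)}\ge\lambda^{p^+}$ combined with the second input, $1<\int_E|f|^{p(x)}/\lambda^{p(x)}\,\de x\le\varrho_{p(\cdot)}(f)/\lambda^{p^+}$, i.e.\ $\lambda^{p^+}<\varrho_{p(\cdot)}(f)$, which gives $a^{p^+}\le\varrho_{p(\cdot)}(f)$ in the limit $\lambda\uparrow a$. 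Part (ii) will be obtained by the symmetric scheme with $p^-$ and $p^+$ swapped, using the comparisons available for $\lambda\ge 1$: $\lambda>a$ and $\lambda^{p(x)}\le\lambda^{p^+}$ produce $\varrho_{p(\cdot)}(f)\le a^{p^+}$, while $\lambda\in[1,a)$ and $\lambda^{p(x)}\ge\lambda^{p^-}$ produce $a^{p^-}\le\varrho_{p(\cdot)}(f)$. The boundary case $a=1$ is harmless, since both inequality chains then collapse to $\varrho_{p(\cdot)}(f)=1$. Finally, the ``in particular'' equivalences follow without extra work: $\varrho_{p(\cdot)}(f)\le 1$ forces $a\le 1$, because $\lambda=1$ becomes admissible in the defining infimum; the converse implication is contained in (i); and the characterisation of $a=1$ is read off the chains by ruling out $a<1$ (which would give $\varrho_{p(\cdot)}(f)\le a^{p^-}<1$) and $a>1$ (which would give $\varrho_{p(\cdot)}(f)\ge a^{p^-}>1$).

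I do not anticipate a substantial obstacle. The only subtlety is keeping track of whether $\lambda^{p(x)}$ is bounded above or below by $\lambda^{p^\pm}$ on the two sides of $\lambda=1$, which dictates whether the appropriate bound yields an upper or a lower estimate on the modular. The passage to the limit $\lambda\to a^\pm$ is elementary: every intermediate inequality compares the $\lambda$-independent quantity $\varrho_{p(\cdot)}(f)$ with a continuous power of $\lambda$, so no convergence theorem on the modular itself is invoked, and in particular no use of the assumption $p^+<\infty$ beyond what is already encoded in the finiteness of $p^-$ and $p^+$.
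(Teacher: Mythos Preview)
Your proof is correct. The paper itself does not prove this proposition but merely cites \cite[Lemma~3.2.5 and Lemma~3.4.2]{DHHR}, so there is no in-paper argument to compare against; your derivation via the defining infimum of the Luxemburg norm and the pointwise comparisons $\lambda^{p^\pm}\lessgtr\lambda^{p(x)}$ on either side of $\lambda=1$ is exactly the standard one found in that reference.
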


\begin{remark}\label{rem:volume}
As an immediate consequence of Proposition~\ref{prop:modular} applied for $f \equiv 1$ on $E$, we get
$$ |E|^{\frac{1}{p^-}} \le \|1\|_{L^{p(\cdot)}(E)} \le |E|^{\frac{1}{p^+}} \quad \text{if $|E| \le 1$}, \quad \quad \text{and} \quad \quad |E|^{\frac{1}{p^+}} \le \|1\|_{L^{p(\cdot)}(E)} \le |E|^{\frac{1}{p^-}} \quad \text{if $|E|\ge 1$}. $$
\end{remark}

For every $\lambda>0$ and $x_0\in \R^n$ we define $\psi_{x_0,\lambda}\colon \R^n\to \R^n$ as 
\begin{equation}\label{eq:psilambda}
\psi_{x_0,\lambda}(x):=x_0+\lambda x\quad\text{for $x\in\R^n$}. 
\end{equation}
Clearly, $\psi_{x_0,\lambda}$ is invertible and it inverse is given by 
$$\psi_{x_0,\lambda}^{-1}(y)=\frac{y-x_0}{\lambda}\quad\text{for $y\in\R^n$}.$$
By Proposition~\ref{prop:modular} and the area formula we can easily derive the following result.

\begin{proposition}\label{prop:change}
Let $p\in\mathcal P_b(E)$ and let $f\in L^{p(\cdot)}(E;\R^d)$. For $\lambda>0$ and $x_0\in \R^n$ define 
$$g:=f\circ \psi_{x_0,\lambda}\quad\text{and}\quad q:=p\circ\psi_{x_0,\lambda}\quad\text{in $\psi^{-1}_{x_0,\lambda}(E)$}.$$ Then, $q\in\mathcal P_b(\psi_{x_0,\lambda}^{-1}(E))$, $g\in L^{q(\cdot)}(\psi_{x_0,\lambda}^{-1}(E))$, and 
\begin{equation*}
\min\{\lambda^{-\frac{n}{p^-}},\lambda^{-\frac{n}{p^+}}\} \|f\|_{L^{p(\cdot)}(E)}\le\|g\|_{L^{q(\cdot)}(\psi_{x_0,\lambda}^{-1}(E))}\le \max\{\lambda^{-\frac{n}{p^-}},\lambda^{-\frac{n}{p^+}}\} \|f\|_{L^{p(\cdot)}(E)}.
\end{equation*}
\end{proposition}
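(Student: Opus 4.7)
The strategy is to push the affine change of variables from the modular level up to the norm level, exploiting the equivalence $\|h\|_{p(\cdot)}=1\iff \varrho_{p(\cdot)}(h)=1$ from Proposition~\ref{prop:modular} as the bridge.

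First, since $\psi_{x_0,\lambda}$ is a bijection of $\R^n$ onto itself, $q=p\circ\psi_{x_0,\lambda}$ is measurable on $\psi_{x_0,\lambda}^{-1}(E)$ with the same essential range as $p$, so $q\in\mathcal{P}_b(\psi_{x_0,\lambda}^{-1}(E))$ with $q^{\pm}=p^{\pm}$. The key computation is then the area formula for the affine map $x=\psi_{x_0,\lambda}(y)$, whose Jacobian is $\lambda^n$: for every $\mu>0$ it delivers the identity
$$\varrho_{q(\cdot)}(g/\mu)=\int_{\psi_{x_0,\lambda}^{-1}(E)}|g(y)/\mu|^{q(y)}\,\de y=\lambda^{-n}\int_{E}|f(x)/\mu|^{p(x)}\,\de x=\lambda^{-n}\varrho_{p(\cdot)}(f/\mu).$$

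Set $A:=\|f\|_{L^{p(\cdot)}(E)}$ and $B:=\|g\|_{L^{q(\cdot)}(\psi_{x_0,\lambda}^{-1}(E))}$. If $B=0$, then $g=0$ a.e., so $f=0$ a.e. and $A=0$, and the claim is trivial. Otherwise, the one-homogeneity of the norm gives $\|g/B\|_{q(\cdot)}=1$, and the ``$\|\cdot\|=1\iff\varrho=1$'' equivalence from the last part of Proposition~\ref{prop:modular} yields $\varrho_{q(\cdot)}(g/B)=1$. Evaluating the key identity at $\mu=B$ therefore produces $\varrho_{p(\cdot)}(f/B)=\lambda^n$.

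To convert this modular equation into a norm inequality, I would apply Proposition~\ref{prop:modular}(i)--(ii) to $f/B$. Depending on whether $\|f/B\|_{p(\cdot)}\le 1$ or $\ge 1$, the proposition sandwiches $\lambda^n=\varrho_{p(\cdot)}(f/B)$ between $(A/B)^{p^-}$ and $(A/B)^{p^+}$, so in either case one obtains
$$\min\{(A/B)^{p^-},(A/B)^{p^+}\}\le\lambda^n\le\max\{(A/B)^{p^-},(A/B)^{p^+}\}.$$
Inverting this double inequality by the monotonicity of $t\mapsto t^{\alpha}$ yields $\min\{\lambda^{n/p^-},\lambda^{n/p^+}\}\le A/B\le\max\{\lambda^{n/p^-},\lambda^{n/p^+}\}$, which rearranges to the announced bounds on $B$ in terms of $A$. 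No genuine obstacle is expected; the only subtlety is the standard inconvenience of the variable exponent setting, namely the lack of one-homogeneity of the modular, which is precisely what forces the case split according to whether $\|f/B\|_{p(\cdot)}$ exceeds $1$ and produces the $\min/\max$ structure appearing in the conclusion.
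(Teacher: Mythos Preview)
Your proof is correct and follows precisely the route the paper indicates: it combines the area formula (yielding the modular identity $\varrho_{q(\cdot)}(g/\mu)=\lambda^{-n}\varrho_{p(\cdot)}(f/\mu)$) with Proposition~\ref{prop:modular} to pass from modulars to norms. The paper gives no further detail beyond the sentence ``By Proposition~\ref{prop:modular} and the area formula we can easily derive the following result'', so your write-up is in fact a faithful expansion of the intended argument.
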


\begin{proposition}[{\cite[Lemma~3.2.20]{DHHR}}]\label{prop:spq}
Let $p,q,s\in\mathcal P_b(E)$ be satisfying
$$\frac{1}{s(x)}=\frac{1}{p(x)}+\frac{1}{q(x)}\quad\text{for a.e.\ $x\in E$}.$$
Let $f\in L^{p(\cdot)}(E;\R^d)$ and $g\in L^{q(\cdot)}(E;\R^d)$. Then $f\cdot g\in L^{s(\cdot)}(E)$ and
$$\|f\cdot g\|_{L^{s(\cdot)}(E)}\le 2\|f\|_{L^{p(\cdot)}(E)}\|g\|_{L^{q(\cdot)}(E)}.$$
\end{proposition}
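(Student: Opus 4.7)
The plan is to establish the variable-exponent generalization of H\"older's inequality by combining a pointwise Young inequality with Proposition~\ref{prop:modular}, exploiting the fact that under $\tfrac{1}{s(x)}=\tfrac{1}{p(x)}+\tfrac{1}{q(x)}$ the ``conjugate ratios'' $p(x)/s(x)$ and $q(x)/s(x)$ are themselves conjugate exponents at every point.

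First I would reduce to the normalized case $\|f\|_{L^{p(\cdot)}(E)}=\|g\|_{L^{q(\cdot)}(E)}=1$. Since the variable-exponent norm is positively $1$-homogeneous (immediate from its definition as an infimum of dilations), and since the case where either norm vanishes is trivial by monotonicity, the proof reduces to showing $\|fg\|_{L^{s(\cdot)}(E)}\le 2$ under this normalization. By Proposition~\ref{prop:modular}, the normalization is equivalent to $\varrho_{p(\cdot)}(f)\le 1$ and $\varrho_{q(\cdot)}(g)\le 1$, which will be the only structural information needed.

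Next, for a.e.\ $x\in E$ I would introduce the auxiliary exponents $r(x):=p(x)/s(x)$ and $r'(x):=q(x)/s(x)$. The hypothesis $1/s=1/p+1/q$ translates to $1/r(x)+1/r'(x)=1$ with $r,r'\ge 1$ (since $s\le\min\{p,q\}$ a.e.), so classical Young's inequality applies pointwise and yields
\[
|f(x)g(x)|^{s(x)}=\bigl(|f(x)|^{s(x)}\bigr)\bigl(|g(x)|^{s(x)}\bigr)\le\frac{|f(x)|^{p(x)}}{r(x)}+\frac{|g(x)|^{q(x)}}{r'(x)}\le |f(x)|^{p(x)}+|g(x)|^{q(x)}.
\]
Integrating over $E$ and using the normalization produces $\varrho_{s(\cdot)}(fg)\le 2$.

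The final step is to convert this modular bound into a norm bound, and this is where the mild obstacle caused by the lack of homogeneity of $\varrho_{p(\cdot)}$ appears. Because $s(x)\ge 1$, I would observe that $2^{-s(x)}\le 1/2$ a.e., so
\[
\varrho_{s(\cdot)}\!\bigl(\tfrac{fg}{2}\bigr)=\int_E 2^{-s(x)}|f(x)g(x)|^{s(x)}\,\de x\le\frac{1}{2}\varrho_{s(\cdot)}(fg)\le 1.
\]
Invoking the characterization in Proposition~\ref{prop:modular} (the equivalence $\varrho_{p(\cdot)}\le 1\Leftrightarrow\|\cdot\|\le 1$) gives $\|fg/2\|_{L^{s(\cdot)}(E)}\le 1$, hence $\|fg\|_{L^{s(\cdot)}(E)}\le 2$. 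Undoing the normalization by $1$-homogeneity delivers the desired estimate with constant~$2$. The only delicate point is precisely this passage from modular to norm, which forces the factor $2$ (one cannot hope for constant $1$ as in the fixed-exponent case because the modular is not $s(x)$-homogeneous); all remaining computations are routine.
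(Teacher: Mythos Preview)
Your argument is correct. Note, however, that the paper does not supply its own proof of this proposition: it is quoted verbatim from \cite[Lemma~3.2.20]{DHHR} as a preliminary result, so there is no in-paper proof to compare against. Your approach---normalize by $1$-homogeneity, apply pointwise Young with the conjugate ratios $p(x)/s(x)$ and $q(x)/s(x)$, then pass from $\varrho_{s(\cdot)}(fg)\le 2$ to $\|fg\|_{L^{s(\cdot)}}\le 2$ via $2^{-s(x)}\le\tfrac12$ and Proposition~\ref{prop:modular}---is exactly the standard one given in~\cite{DHHR}.
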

 In particular, for $s \equiv 1$, the exponent $p' := q $ satisfies $1=\frac{1}{p(x)}+\frac{1}{p'(x)}$ for a.e.\ $x\in E$, and is called the \emph{dual variable exponent} of $p$.

\begin{proposition}[{\cite[Corollary~3.3.4]{DHHR}}]
Assume that $|E|<\infty$. Let $p,q\in\mathcal P_b(E)$ be satisfying 
$$p(x)\le q(x)\quad\text{for a.e.\ $x\in E$}.$$
Then 
$$L^{q(\cdot)}(E;\R^d)\subseteq L^{p(\cdot)}(E;\R^d)$$
with continuous embeddings.
\end{proposition}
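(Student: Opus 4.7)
The plan is to reduce the embedding to an elementary pointwise comparison between $|t|^{p(x)}$ and $|t|^{q(x)}$, and then to use Proposition~\ref{prop:modular} to convert the resulting modular estimate into a norm inequality. Since $|E|$ is finite, no H\"older-type decomposition is required, and the approach avoids the delicate issue of defining an auxiliary exponent $s(\cdot)$ via $1/s(x)=1/p(x)-1/q(x)$, which would fail to belong to $\mathcal{P}_b(E)$ on the set $\{p=q\}$, so that a direct application of Proposition~\ref{prop:spq} is not available.

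First, I would establish the pointwise bound
\[
|t|^{p(x)}\le 1+|t|^{q(x)} \qquad \text{for every } t\in\R^d \text{ and a.e.\ } x\in E,
\]
obtained by distinguishing the cases $|t|\le 1$ (where $|t|^{p(x)}\le 1$) and $|t|>1$ (where $|t|^{p(x)}\le |t|^{q(x)}$ since $p(x)\le q(x)$). This is the only place where the hypothesis $p\le q$ enters.

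Second, for nonzero $f\in L^{q(\cdot)}(E;\R^d)$ I would normalize by setting $g:=f/\|f\|_{L^{q(\cdot)}(E)}$, so that $\|g\|_{L^{q(\cdot)}(E)}=1$ and hence $\varrho_{q(\cdot)}(g)\le 1$ by Proposition~\ref{prop:modular}. Integrating the pointwise inequality then yields
\[
\varrho_{p(\cdot)}(g) \le |E|+\varrho_{q(\cdot)}(g)\le |E|+1.
\]
Applying Proposition~\ref{prop:modular} in the opposite direction, either $\|g\|_{L^{p(\cdot)}(E)}<1$, or $\|g\|_{L^{p(\cdot)}(E)}^{p^-}\le \varrho_{p(\cdot)}(g)\le |E|+1$. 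In either case
\[
\|g\|_{L^{p(\cdot)}(E)} \le \max\bigl\{1,(|E|+1)^{1/p^-}\bigr\} =: C,
\]
and the $1$-homogeneity of the variable-exponent norm gives $\|f\|_{L^{p(\cdot)}(E)} \le C\,\|f\|_{L^{q(\cdot)}(E)}$, which is exactly the continuous embedding.

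The only conceptual subtlety, characteristic of variable-exponent spaces, is the systematic passage between the (non-homogeneous) modular $\varrho_{p(\cdot)}$ and the norm $\|\cdot\|_{L^{p(\cdot)}(E)}$; Proposition~\ref{prop:modular} is precisely the bridge needed, and it forces the embedding constant to depend on $|E|$ and on $p^-$, rather than being the universal constant $|E|^{1/p-1/q}$ of the classical fixed-exponent case.
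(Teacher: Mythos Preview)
Your argument is correct. The paper does not supply its own proof of this proposition; it is simply quoted from~\cite[Corollary~3.3.4]{DHHR}, so there is no in-paper proof to compare against. Your direct route via the pointwise bound $|t|^{p(x)}\le 1+|t|^{q(x)}$ together with Proposition~\ref{prop:modular} is a clean self-contained argument and yields the continuous embedding with constant $\max\{1,(|E|+1)^{1/p^-}\}$.

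One small remark on your side comment: the obstruction to applying Proposition~\ref{prop:spq} is an artefact of the restricted framework $\mathcal P_b(E)$ adopted in this paper (exponents bounded away from $\infty$). In the full setting of~\cite{DHHR}, where variable exponents are allowed to take the value $\infty$, the auxiliary exponent defined by $1/s(x)=1/p(x)-1/q(x)$ (with $s(x)=\infty$ on $\{p=q\}$) is admissible, and the embedding is there obtained precisely via the generalized H\"older inequality $\|f\|_{L^{p(\cdot)}}\le 2\|1\|_{L^{s(\cdot)}}\|f\|_{L^{q(\cdot)}}$. Your approach has the advantage of staying entirely within the paper's conventions and of giving a fully explicit constant.
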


\begin{proposition}[{\cite[Lemma~3.4.4 and Theorem~3.4.7]{DHHR}}]
Let $p\in\mathcal P_b(E)$ with $p^->1$. The Banach space $L^{p(\cdot)}(E;\R^d)$ is separable and reflexive.
\end{proposition}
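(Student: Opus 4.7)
The plan is to establish separability and reflexivity separately, in both cases relying on the hypothesis $1 < p^- \le p^+ < \infty$ to convert between properties of the modular $\varrho_{p(\cdot)}$ and properties of the norm $\|\cdot\|_{L^{p(\cdot)}(E)}$ via Proposition~\ref{prop:modular}.

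For separability, I would construct an explicit countable dense subset. Given $f \in L^{p(\cdot)}(E;\R^d)$, I would first truncate to $f_N := f\chi_{\{|f|\le N\}\cap\{|x|\le N\}}$, then approximate $f_N$ by simple functions, then replace the values and the sets in the simple function by rational vectors and finite unions of dyadic cubes with rational vertices. At each step the approximation is dominated by an $L^{p(\cdot)}$ function and converges pointwise a.e., so dominated convergence yields modular convergence $\varrho_{p(\cdot)}(f_k - f) \to 0$. Because $p^+ < \infty$, Proposition~\ref{prop:modular}(i) applied to the differences $f_k - f$ (rescaled if necessary to lie in the unit ball of $L^{p(\cdot)}$) upgrades this to norm convergence $\|f_k - f\|_{L^{p(\cdot)}(E)} \to 0$.

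For reflexivity, I would route through uniform convexity and invoke the Milman–Pettis theorem. The pointwise Clarkson inequalities, valid for every exponent $p(x) \in [p^-, p^+] \subseteq (1,\infty)$, yield a modular version
\begin{equation*}
\varrho_{p(\cdot)}\!\Bigl(\tfrac{f+g}{2}\Bigr) + \varrho_{p(\cdot)}\!\Bigl(\tfrac{f-g}{2}\Bigr) \le \tfrac{1}{2}\varrho_{p(\cdot)}(f) + \tfrac{1}{2}\varrho_{p(\cdot)}(g),
\end{equation*}
with a quantitative defect depending only on $p^-$ and $p^+$. Using Proposition~\ref{prop:modular} to pass between modular and norm, one shows that for every $\varepsilon > 0$ there exists $\delta > 0$ (depending on $p^\pm$) such that $\|f\|_{L^{p(\cdot)}(E)} = \|g\|_{L^{p(\cdot)}(E)} = 1$ and $\|f-g\|_{L^{p(\cdot)}(E)} \ge \varepsilon$ imply $\|\tfrac{f+g}{2}\|_{L^{p(\cdot)}(E)} \le 1 - \delta$. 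Uniform convexity of the norm then gives reflexivity.

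The main obstacle I expect is precisely this passage from a modular Clarkson-type inequality to a sharp norm-uniform-convexity statement: the non-homogeneity of $\varrho_{p(\cdot)}$ means that Proposition~\ref{prop:modular} provides only two-sided control by the $p^-$-th and $p^+$-th powers of the norm, so the conversion needs a careful case distinction between $\|\cdot\|_{L^{p(\cdot)}} \le 1$ and $\|\cdot\|_{L^{p(\cdot)}} \ge 1$ and a bookkeeping of the dependencies $\delta = \delta(\varepsilon, p^-, p^+)$. An alternative route to reflexivity, which sidesteps this delicacy, is to first identify the dual via the pairing $(f,g)\mapsto \int f\cdot g\,\mathrm{d}x$: by Proposition~\ref{prop:spq} this induces a continuous injection $L^{p'(\cdot)}(E;\R^d) \hookrightarrow L^{p(\cdot)}(E;\R^d)^*$, and surjectivity follows from a Radon–Nikodym argument applied to $\Phi \mapsto \nu(A) := \Phi(\chi_A e_i)$. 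Since $p^- > 1$ ensures $(p')^+ < \infty$, the same argument identifies $L^{p(\cdot)}(E;\R^d)$ with $L^{p'(\cdot)}(E;\R^d)^*$, yielding reflexivity by iteration of the canonical embedding.
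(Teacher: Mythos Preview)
The paper does not supply a proof of this proposition; it is a preliminary fact quoted from~\cite{DHHR} and never argued independently. So there is nothing to compare against, and the question is only whether your sketch stands on its own.

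Your separability argument is fine. For reflexivity, however, the displayed modular inequality
\[
\varrho_{p(\cdot)}\!\Bigl(\tfrac{f+g}{2}\Bigr) + \varrho_{p(\cdot)}\!\Bigl(\tfrac{f-g}{2}\Bigr) \le \tfrac{1}{2}\varrho_{p(\cdot)}(f) + \tfrac{1}{2}\varrho_{p(\cdot)}(g)
\]
is simply the convexity of $t\mapsto t^{p(x)}$ and carries \emph{no} quantitative defect: equality occurs whenever $f=g$, so nothing prevents $\varrho_{p(\cdot)}(\tfrac{f-g}{2})$ from being small while $\|f-g\|$ is of order one, and the inequality as written does not yield uniform convexity. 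The genuine Clarkson argument in the variable-exponent setting (this is how~\cite{DHHR} proceeds) splits $E$ into $\{p(x)\ge 2\}$ and $\{1<p(x)<2\}$, applies the two classical pointwise Clarkson inequalities separately, and only then converts to norms via Proposition~\ref{prop:modular}; the defect comes from the pointwise inequalities, not from convexity of the modular. You flag this passage as the main obstacle, which is right, but the inequality you wrote down is not the starting point that would overcome it.

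Your alternative route via the duality $L^{p'(\cdot)}(E;\R^d)\cong L^{p(\cdot)}(E;\R^d)^*$ is correct and self-contained: Proposition~\ref{prop:spq} gives continuity of the pairing, the Radon--Nikod\'ym argument gives surjectivity, and since $p^->1$ forces $(p')^+<\infty$ the same identification applies to the bidual. This is the cleaner of the two approaches and would complete the proof.
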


From now on, let $\Omega\subseteq\R^n$ be an open set and let $p\in\mathcal P_b(\Omega)$. The Sobolev space with variable exponent $p$ is defined as
$$W^{1,p(\cdot)}(\Omega;\R^d):=\left\{f\in L^{p(\cdot)}(\Omega;\R^d)\,:\, \nabla f\in L^{p(\cdot)}(\Omega;\R^{d\times n})\right\}.$$

We recall the following results for $W^{1,p(\cdot)}(\Omega;\R^d)$.

\begin{proposition}[{\cite[Corollary~3.3.4]{DHHR}}]
Let $|\Omega|<\infty$ and let $p,q\in\mathcal P_b(\Omega)$ be satisfying 
$$p(x)\le q(x)\quad\text{for a.e.\ $x\in\Omega$}.$$
Then 
$$W^{1,q(\cdot)}(\Omega;\R^d)\subseteq W^{1,p(\cdot)}(\Omega;\R^d)$$
with continuous embeddings.
\end{proposition}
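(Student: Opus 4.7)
The plan is to reduce the statement immediately to its Lebesgue-space counterpart, namely the embedding $L^{q(\cdot)}(\Omega;\R^d) \subseteq L^{p(\cdot)}(\Omega;\R^d)$ stated just above (based on \cite[Corollary 3.3.4]{DHHR}). Indeed, by definition a function $f$ belongs to $W^{1,q(\cdot)}(\Omega;\R^d)$ precisely when $f \in L^{q(\cdot)}(\Omega;\R^d)$ and $\nabla f \in L^{q(\cdot)}(\Omega;\R^{d\times n})$. Applying the Lebesgue embedding componentwise to both $f$ and $\nabla f$ yields $f \in L^{p(\cdot)}(\Omega;\R^d)$ and $\nabla f \in L^{p(\cdot)}(\Omega;\R^{d\times n})$, hence $f \in W^{1,p(\cdot)}(\Omega;\R^d)$. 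This gives the set-theoretic inclusion.

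For the continuity, I would equip the Sobolev space with its natural norm
\begin{equation*}
\|f\|_{W^{1,p(\cdot)}(\Omega)} := \|f\|_{L^{p(\cdot)}(\Omega)} + \|\nabla f\|_{L^{p(\cdot)}(\Omega)},
\end{equation*}
and similarly for $q(\cdot)$. Letting $C = C(p,q,|\Omega|)$ denote the embedding constant produced by \cite[Corollary 3.3.4]{DHHR} for the Lebesgue spaces, one obtains
\begin{equation*}
\|f\|_{L^{p(\cdot)}(\Omega)} \le C\,\|f\|_{L^{q(\cdot)}(\Omega)}, \qquad \|\nabla f\|_{L^{p(\cdot)}(\Omega)} \le C\,\|\nabla f\|_{L^{q(\cdot)}(\Omega)},
\end{equation*}
and summing these yields $\|f\|_{W^{1,p(\cdot)}(\Omega)} \le C\,\|f\|_{W^{1,q(\cdot)}(\Omega)}$, which is the desired continuous embedding.

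There is essentially no obstacle here: the finiteness condition $|\Omega|<\infty$ and the pointwise a.e.\ inequality $p(x)\le q(x)$ are exactly the hypotheses used to invoke the Lebesgue embedding, and the gradient in the distributional sense does not change between the two spaces, so no consistency check on $\nabla f$ is needed. The only minor point worth stating explicitly in the proof is that the embedding constant depends on $p$, $q$, and $|\Omega|$ only through the constant furnished by the Lebesgue-space result, so no new dependence is introduced at the Sobolev level.
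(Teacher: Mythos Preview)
Your proposal is correct and is exactly the natural argument: the Sobolev embedding follows immediately from the Lebesgue embedding applied to both $f$ and $\nabla f$. Note that the paper does not actually give a proof of this proposition---it is stated as a direct citation of \cite[Corollary~3.3.4]{DHHR}---so there is nothing further to compare.
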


\begin{proposition}[{\cite[Theorem~8.1.6]{DHHR}}]
Let $p\in\mathcal P_b(\Omega)$ with $p^->1$. Then the Banach space $W^{1,p(\cdot)}(\Omega;\R^d)$ is separable and reflexive.
\end{proposition}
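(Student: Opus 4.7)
The plan is to derive both properties of $W^{1,p(\cdot)}(\Omega;\R^d)$ from those of $L^{p(\cdot)}$ (which were stated in the preceding proposition) by realizing the Sobolev space as a closed subspace of a product of two Lebesgue spaces with variable exponent. Concretely, I would consider the linear map
\[
T\colon W^{1,p(\cdot)}(\Omega;\R^d)\to L^{p(\cdot)}(\Omega;\R^d)\times L^{p(\cdot)}(\Omega;\R^{d\times n}),\qquad Tf:=(f,\nabla f),
\]
and equip the product space with the norm $\|(f,F)\|:=\|f\|_{L^{p(\cdot)}(\Omega)}+\|F\|_{L^{p(\cdot)}(\Omega)}$. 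By the very definition of the Sobolev norm, $T$ is then an isometric embedding onto its image.

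Since $p^->1$, the previously stated proposition ensures that both factors $L^{p(\cdot)}(\Omega;\R^d)$ and $L^{p(\cdot)}(\Omega;\R^{d\times n})$ are separable and reflexive. A finite Cartesian product of separable reflexive Banach spaces is again separable and reflexive: the product of two countable dense subsets is countable and dense, while the canonical isomorphism $(X\times Y)^{**}\cong X^{**}\times Y^{**}$ together with the reflexivity of each factor yields reflexivity of the product. Since any subspace of a separable metric space is separable and any \emph{closed} subspace of a reflexive Banach space is reflexive, it suffices to prove that $T(W^{1,p(\cdot)}(\Omega;\R^d))$ is closed in the product.

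The main (and essentially only nontrivial) step is this closedness. Take a sequence with $(f_k,\nabla f_k)\to(f,F)$ in the product norm, i.e., $f_k\to f$ and $\nabla f_k\to F$ in $L^{p(\cdot)}(\Omega)$; the goal is to identify $F$ with $\nabla f$ in the distributional sense. The hardest part is to justify that convergence in $L^{p(\cdot)}$ is strong enough to pass to the limit against test functions, which reduces to establishing the continuous embedding $L^{p(\cdot)}(\Omega)\hookrightarrow L^1_{\loc}(\Omega)\hookrightarrow\mathcal{D}'(\Omega;\R^d)$. This is a one-line application of H\"older's inequality (Proposition~\ref{prop:spq}) with the dual variable exponent $p'(\cdot)$, which yields for any compact $K\subset\Omega$
\[
\|g\|_{L^1(K)}\le 2\|g\|_{L^{p(\cdot)}(K)}\|\chi_K\|_{L^{p'(\cdot)}(\Omega)}\le C(K)\|g\|_{L^{p(\cdot)}(\Omega)},
\]
where the bound on $\|\chi_K\|_{L^{p'(\cdot)}(\Omega)}$ comes from Remark~\ref{rem:volume}. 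Plugging this into the identity $\int_\Omega f_k\,\partial_j\varphi\,\de x=-\int_\Omega(\partial_jf_k)\varphi\,\de x$ for every $\varphi\in C_c^\infty(\Omega)$ and letting $k\to\infty$ yields $F=\nabla f$, whence $(f,F)=Tf$ belongs to the image of $T$. This proves closedness and completes the argument.
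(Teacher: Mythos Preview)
The paper does not supply its own proof of this proposition; it is quoted verbatim from \cite[Theorem~8.1.6]{DHHR} as a preliminary fact, so there is nothing to compare against. Your argument is correct and is precisely the standard route (and essentially the one in \cite{DHHR}): realize $W^{1,p(\cdot)}$ as a closed subspace of $L^{p(\cdot)}(\Omega;\R^d)\times L^{p(\cdot)}(\Omega;\R^{d\times n})$ via $f\mapsto(f,\nabla f)$, inherit separability and reflexivity from the factors, and verify closedness by passing to the limit in the distributional definition of the weak gradient using the embedding $L^{p(\cdot)}\hookrightarrow L^1_{\loc}$.
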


We now introduce the $\log$-Hölder condition for a variable exponent $p$, which is needed to gain additional properties for the Sobolev space with variable exponent $p$. 

A function $\alpha\colon \Omega\to\R$ is \emph{locally $\log$-Hölder continuous} on $\Omega$ if there exists a constant $c_1(\alpha)\ge 0$ such that
\begin{equation}\label{eq:logcon}
|\alpha(x)-\alpha(y)|\le \frac{c_1(\alpha)}{\log(\e + 1/|x-y|)}\quad\text{for every $x,y\in \Omega$ with $x\neq y$}.
\end{equation}
A function $\alpha$ satisfies the \emph{$\log$-Hölder decay condition} if there exist $\alpha_\infty\in\R$ and a constant $c_2(\alpha)\ge 0$ such that
\begin{equation}\label{eq:logcon2}
|\alpha(x)-\alpha_\infty|\le \frac{c_2(\alpha)}{\log(\e + |x|)}\quad\text{for every $x,y\in \Omega$}.
\end{equation}
A function $\alpha$ is \emph{globally $\log$-Hölder continuous} on $\Omega$ if it is locally $\log$-Hölder continuous on $\Omega$ and satisfies the $\log$-Hölder decay condition. The constants $c_1(\alpha)$ and $c_2(\alpha)$ are called the local $\log$-Hölder constant and the $\log$-Hölder decay constant, respectively. The maximum $\max\{c_1(\alpha),c_2(\alpha)\}$ is just called the \emph{$\log$-Hölder constant} of $\alpha$ and is denoted by $c_{\log}(\alpha)$.

\begin{remark}\label{rem:H_logglob}
When $\Omega$ is a bounded open set, we can simplify the notion of globally $\log$-Hölder continuity. In this case, a function $\alpha\colon\Omega\to\R$ is globally $\log$-Hölder continuous if and only if it is locally $\log$-Hölder continuous and $c_{\log}(\alpha)$ depends only on $c_1(\alpha)$, $\alpha^-$, $\alpha^+$, and the size of $\Omega$. Equivalently, a function $\alpha\colon\Omega\to\R$ is globally $\log$-Hölder continuous if and only if there exists a constant $c_3(\alpha)>0$ such that
$$|\alpha(x)-\alpha(y)|\le \frac{c_3(\alpha)}{-\log|x-y|}\quad\text{for all $x,y\in\Omega$ with $0<|x-y|\le \frac{1}{2}$},$$
which is often used in the literature for the definition of the (locally) $\log$-Hölder condition.
\end{remark}

The geometrical meaning of the $\log$-Hölder condition~\eqref{eq:logcon} is given by the following proposition.

\begin{proposition}[{\cite[Lemma~4.1.6]{DHHR}}]\label{prop:smallQ}
Let $\alpha\colon\R^n\to\R$ be a continuous and bounded function. The following are equivalent
\begin{itemize}
\item[(i)] $\alpha$ is locally $\log$-H\"older continuous;
\item[(ii)] there exists a constant $C>0$ such that for all cubes $Q$ we have $|Q|^{\alpha_Q^--\alpha_Q^+}\le C$.
\end{itemize}
The constant $C$ depends on $n$ and $c_1(\alpha)$.
\end{proposition}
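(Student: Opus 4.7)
The plan is to handle the two implications separately, with the main exchange being between the logarithmic modulus of continuity of $\alpha$ and a power-law bound on $|Q|^{\alpha_Q^- - \alpha_Q^+}$.

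For the implication (i)$\Rightarrow$(ii), I would start with a cube $Q\subset\R^n$ of side length $\ell$. When $\ell\geq 1$, we have $|Q|\geq 1$ and $\alpha_Q^--\alpha_Q^+\leq 0$, so $|Q|^{\alpha_Q^--\alpha_Q^+}\leq 1$ for free. For $\ell<1$, continuity of $\alpha$ on the compact set $\overline Q$ lets me pick $x_\pm\in\overline Q$ realizing the extrema, with $|x_+-x_-|\leq \diam(Q)=\sqrt n\,\ell$. The log-H\"older condition~\eqref{eq:logcon} then yields
\[
\alpha_Q^+-\alpha_Q^-\leq\frac{c_1(\alpha)}{\log(\e+1/(\sqrt n\,\ell))},
\]
using that $t\mapsto 1/\log(\e+1/t)$ is increasing. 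Taking logarithms, the inequality $|Q|^{\alpha_Q^--\alpha_Q^+}\leq C$ becomes equivalent to bounding
\[
n(\alpha_Q^+-\alpha_Q^-)|\log\ell|\leq \frac{n\,c_1(\alpha)\,|\log\ell|}{\log(\e+1/(\sqrt n\,\ell))}
\]
uniformly for $\ell\in(0,1)$. This is elementary: the ratio on the right is continuous in $\ell$, tends to $0$ as $\ell\to 1^-$ and to a finite limit as $\ell\to 0^+$, hence is bounded by a constant depending only on $n$.

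For the converse (ii)$\Rightarrow$(i), fix $x\neq y$ in $\R^n$ and let $Q$ be the smallest closed cube containing both, so $\ell=\|x-y\|_\infty\leq |x-y|$ and $|Q|=\ell^n$. Since both points lie in $Q$,
\[
|\alpha(x)-\alpha(y)|\leq \alpha_Q^+-\alpha_Q^-.
\]
When $\ell<1$, inverting the inequality in (ii) gives
\[
\alpha_Q^+-\alpha_Q^-\leq \frac{\log C}{n|\log\ell|}\leq \frac{\log C}{n|\log|x-y||},
\]
where I used $\ell\leq|x-y|$. The elementary estimate $\log(\e+1/r)\leq\log(1+\e)+|\log r|$ valid for $r\in(0,1]$ then allows me to bound the right-hand side by $\tilde c(n,C)/\log(\e+1/|x-y|)$, at least for $|x-y|$ small enough that $|\log|x-y||$ dominates $\log(1+\e)$. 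For the remaining range of $|x-y|$, the denominator $\log(\e+1/|x-y|)$ stays bounded below by a positive constant, while the left-hand side is controlled by $2\|\alpha\|_\infty$, so~\eqref{eq:logcon} follows by enlarging the final constant.

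I expect the main technical obstacle to be the bookkeeping in the converse direction: one needs to patch together an asymptotic estimate for small $|x-y|$, where the cube argument is sharp, with a trivial bound for moderate or large $|x-y|$, where boundedness of $\alpha$ suffices, in order to produce a single log-H\"older constant uniform in $|x-y|\in(0,\infty)$. A mild convenience in the forward direction is that continuity of $\alpha$ guarantees the extrema on $\overline Q$ are attained, so one can apply~\eqref{eq:logcon} directly to honest points rather than to measure-theoretic versions. Apart from these points, both implications reduce to elementary manipulations of logarithms.
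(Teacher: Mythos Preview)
Your argument is correct. Note, however, that the paper does not supply its own proof of this proposition: it is quoted verbatim from \cite[Lemma~4.1.6]{DHHR} and used as a black box, so there is no in-paper proof to compare against. What you have written is essentially the standard proof one finds in that reference, namely reducing both directions to the equivalence between the modulus of continuity bound $\alpha_Q^+-\alpha_Q^-\lesssim 1/|\log\ell|$ for small side length $\ell$ and the power bound $\ell^{-n(\alpha_Q^+-\alpha_Q^-)}\le C$, with the large-$|x-y|$ regime handled trivially by boundedness of $\alpha$. The only cosmetic point worth tightening is the case distinction in the converse: rather than splitting according to whether $|\log|x-y||$ dominates $\log(1+\e)$, it is cleaner to fix a single threshold (say $|x-y|\le 1/2$) below which the cube argument gives the log-H\"older bound directly, and above which boundedness of $\alpha$ together with $\log(\e+1/|x-y|)\ge 1$ suffices; this avoids any ambiguity about what ``small enough'' means.
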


In what follows, let $\Omega\subseteq\R^n$ be an open set and define
$$\mathcal P^{\log}_b(\Omega):=\left\{p\in\mathcal P_b(\Omega)\,:\,\text{$p$ is globally $\log$-Hölder continuous}\right\}.$$
 As $p^+<\infty$, due to~\cite[Remark 4.1.5]{DHHR}, we notice that the space $\mathcal P^{\log}_b(\Omega)$ coincides with the one defined in~\cite[Definition 4.1.4]{DHHR}. Moreover, it is easy to check that $p\in\mathcal P^{\log}_b(\Omega)$ and $p^->1$ if and only if $p'\in\mathcal P^{\log}_b(\Omega)$ and $(p')^->1$, where $p'$ is defined below Proposition~\ref{prop:spq}. 

Sometimes it is convenient to deal with exponents $p$ which are defined in the entire space $\R^n$. This can be done thanks to the following proposition. 

\begin{proposition}[{\cite[Proposition~4.1.7]{DHHR}}]\label{prop:417}
Let $p\in\mathcal P^{\log}_b(\Omega)$. There exists $q\in\mathcal P^{\log}_b(\R^n)$ such that
$$q|_\Omega=p,\quad q^-=p^-,\quad q^+=p^+,\quad c_{\log}(q)=c_{\log}(p).$$
\end{proposition}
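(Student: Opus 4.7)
The natural approach is a McShane-type extension tailored to the log-H\"older modulus of $p$, followed by clipping to enforce the sharp range and the decay condition at infinity.

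\emph{Step 1 (Subadditivity of the modulus).} Set $\omega(t):= c_{\log}(p)/\log(\e + 1/t)$ for $t>0$ and $\omega(0):=0$. A direct computation shows that $t\mapsto t\log(\e+1/t)$ is non-decreasing on $(0,\infty)$, hence $\omega$ is non-decreasing with $\omega(t)/t$ non-increasing. Any function on $[0,\infty)$ with $\omega(0)=0$, non-decreasing, and with $\omega(t)/t$ non-increasing is automatically subadditive: $\omega(s+t)\le \omega(s)+\omega(t)$ for all $s,t\ge 0$.

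\emph{Step 2 (McShane extension).} Define
\begin{equation*}
\tilde q(x):=\inf_{y\in\Omega}\{p(y)+\omega(|x-y|)\},\qquad x\in\R^n.
\end{equation*}
For $x\in\Omega$, taking $y=x$ gives $\tilde q(x)\le p(x)$, while for every $y\in\Omega$ the local log-H\"older bound on $p$ yields $p(y)+\omega(|x-y|)\ge p(x)$; hence $\tilde q|_\Omega=p$. For arbitrary $x_1,x_2\in\R^n$ and a near-minimizer $y_1$ for $\tilde q(x_1)$, subadditivity of $\omega$ gives
\begin{equation*}
\tilde q(x_2)\le p(y_1)+\omega(|x_2-y_1|)\le p(y_1)+\omega(|x_1-y_1|)+\omega(|x_1-x_2|)\le \tilde q(x_1)+\omega(|x_1-x_2|)+\varepsilon,
\end{equation*}
and symmetrically. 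Thus $\tilde q$ inherits the local log-H\"older bound on $\R^n$ with constant $c_{\log}(p)$.

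\emph{Step 3 (Clipping for range and decay).} Since $p_\infty\in [p^-,p^+]$, introduce the envelopes
\begin{equation*}
L(x):=\max\{p^-,\,p_\infty-c_{\log}(p)/\log(\e+|x|)\},\qquad U(x):=\min\{p^+,\,p_\infty+c_{\log}(p)/\log(\e+|x|)\},
\end{equation*}
and set $q(x):=\max\{L(x),\min\{U(x),\tilde q(x)\}\}$. By the decay and boundedness of $p$, one has $L\le p\le U$ on $\Omega$, so $q|_\Omega=p$; by construction $p^-\le q\le p^+$ and $|q(x)-p_\infty|\le c_{\log}(p)/\log(\e+|x|)$ on $\R^n$, yielding $q^\pm=p^\pm$ and the decay constant $c_2(q)\le c_{\log}(p)$. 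Since clipping is $1$-Lipschitz and $L,U$ themselves satisfy the log-H\"older bound with constant $\le c_{\log}(p)$ (checked in the two regimes $|x-y|\le 1$ and $|x-y|\ge 1$ using their boundedness and Lipschitz behavior), $q$ retains the local log-H\"older bound with constant $c_{\log}(p)$.

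\emph{Main obstacle.} The core technical step is the subadditivity of $\omega$ in Step 1, which is what lets the McShane extension preserve the local constant exactly. The subsequent difficulty is purely bookkeeping but delicate: one must verify that the envelopes $L,U$ in Step 3 admit the log-H\"older bound with the same constant $c_{\log}(p)$, and that the combined $\min/\max$ clipping does not inflate it. This requires splitting cases according to whether $|x-y|$ is small or large and whether $|x|$ is small or large, so that in each regime either the local modulus of $\tilde q$ or the oscillation bound of $L,U$ (which is at most $2c_{\log}(p)$) controls the relevant difference without exceeding $\omega(|x-y|)$.
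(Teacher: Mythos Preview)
The paper does not supply its own proof of this proposition: it is quoted verbatim from \cite[Proposition~4.1.7]{DHHR} and used as a black box. So there is nothing in the paper to compare your argument against; the relevant benchmark is the proof in \cite{DHHR}, and your plan (McShane extension with the log-H\"older modulus, followed by clipping to enforce the range and the decay at infinity) is precisely that argument.

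Your outline is correct. On the point you flag as the main obstacle, the verification can be made clean rather than a case-by-case check: writing $\phi(t)=1/\log(\e+t)$, one has $\phi$ convex and decreasing, hence for $0\le s\le t$
\[
\phi(s)-\phi(t)=\int_s^t(-\phi')\le\int_0^{t-s}(-\phi')=1-\phi(t-s),
\]
so it suffices to show $\phi(u)+\phi(1/u)\ge 1$ for all $u>0$, which follows from the symmetry $u\leftrightarrow 1/u$, the value $2/\log(\e+1)>1$ at $u=1$, and the limit $1$ at $u\to\infty$. This gives that $x\mapsto c_{\log}(p)/\log(\e+|x|)$ has local log-H\"older constant $c_{\log}(p)$, hence so do $L$ and $U$ (max/min with constants), and therefore so does $q$ after the $\min/\max$ clipping. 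Combined with the automatic reverse inequality $c_{\log}(q)\ge c_{\log}(p)$ coming from $q|_\Omega=p$, this yields $c_{\log}(q)=c_{\log}(p)$ without inflating the constant.
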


We now recall some functional analytic estimates for $\log$-Hölder variable exponents, that will be used in the paper. The first two results are a continuity estimate for the maximal operator. Given a function $f\in L^1_{\loc}(\R^n;\R^d)$, we define the \emph{maximal function} $M(f)\colon\R^n\to\R$ of $f$ as
\begin{equation}\label{eq:maxfun}
M(f)(x):=\sup_{\rho>0}\frac{1}{|B_{\rho}(x)|}\int_{B_{\rho}(x)}|f(y)|\,\de y\quad\text{for all $x\in\R^n$}.
\end{equation}

\begin{proposition}[{\cite[Theorem~4.3.8]{DHHR}}]\label{prop:maxfun}
Let $p\in\mathcal P^{\log}_b(\R^n)$ with $p^->1$. There exists a constant $C=C(n,p)>0$ such that for all $f\in L^{p(\cdot)}(\R^n;\R^d)$
$$\|M(f)\|_{L^{p(\cdot)}(\R^n)}\le C\|f\|_{L^{p(\cdot)}(\R^n)}.$$
\end{proposition}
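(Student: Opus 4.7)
The plan is to adapt the classical proof of the Hardy--Littlewood maximal theorem, exploiting the log-H\"older regularity of $p(\cdot)$ to treat $p$ as ``essentially constant'' on each ball up to quantitatively controlled errors. First I would reduce to a modular estimate: by Proposition~\ref{prop:modular}, the claimed norm inequality is (up to constants depending on $p^\pm$) equivalent to showing that there exists $C=C(n,p)>0$ such that $\varrho_{p(\cdot)}(Mf)\le C$ whenever $\varrho_{p(\cdot)}(f)\le 1$. Using the homogeneity of $M$ together with rescaling, I may further assume $\|f\|_{L^{p(\cdot)}(\R^n)}=1$, hence $\int_{\R^n}|f|^{p(y)}\,\de y\le 1$.

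The heart of the argument is a pointwise ``key estimate'' in the spirit of Diening: fix an auxiliary constant $q_0\in(1,p^-)$ and establish
\[
(Mf(x))^{p(x)}\le C\,\bigl(M(|f|^{p(\cdot)/q_0})(x)\bigr)^{q_0}+C\,h(x) \qquad\text{for a.e.\ }x\in\R^n,
\]
with a fixed $h\in L^1(\R^n)$ whose $L^1$-norm depends only on $n$ and on the log-H\"older constants of $p$. To prove this, I fix $x$ and a ball $B=B_r(x)$ and split into regimes. For small radii $r\le 1$, Proposition~\ref{prop:smallQ} gives $|B|^{p^+_B-p^-_B}\le C$, which allows one to exchange $p(x)$ with the pointwise values $p(y)$ inside the average at the cost of a uniformly bounded multiplicative constant. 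For large radii $r>1$, the log-H\"older decay condition~\eqref{eq:logcon2} lets one replace $p(x)$ by $p_\infty$, the discrepancy being absorbed into $h(x)\sim(\e+|x|)^{-n\alpha}$ with a suitable $\alpha>1$. A preliminary dichotomy $f=f\chi_{\{|f|\le 1\}}+f\chi_{\{|f|>1\}}$ ensures that monotone comparisons between powers with different exponents work in the correct direction throughout.

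Having the pointwise bound, I take the supremum over balls and integrate. The term $h$ is integrable by construction, while for the main contribution I apply the classical $L^{q_0}\to L^{q_0}$ boundedness of $M$ (valid because $q_0>1$) to the function $|f|^{p(\cdot)/q_0}$, which belongs to $L^{q_0}(\R^n)$ precisely because $\int_{\R^n}|f|^{p(y)}\,\de y\le 1$. Combining yields $\varrho_{p(\cdot)}(Mf)\le C$ and hence the theorem via the reduction in the first paragraph. I expect the main obstacle to be the pointwise key estimate itself: calibrating the error term so that it is uniformly integrable in $x$ requires delicately combining the local log-H\"older modulus (for small balls) with the decay at infinity (for large balls), and the introduction of the auxiliary exponent $q_0<p^-$ is what ultimately produces a genuinely $L^1$-integrable majorant on the right-hand side. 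It is precisely at this last step that the hypothesis $p^->1$ enters in an essential way.
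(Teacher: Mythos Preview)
The paper does not give its own proof of this statement: Proposition~\ref{prop:maxfun} is simply quoted from \cite[Theorem~4.3.8]{DHHR} and used as a black box, so there is nothing to compare against. Your outline is in fact a faithful sketch of the standard argument in the cited reference (Diening's key pointwise estimate combined with the classical constant-exponent maximal theorem at an auxiliary exponent $q_0\in(1,p^-)$), and the roles you assign to Proposition~\ref{prop:smallQ}, the decay condition~\eqref{eq:logcon2}, and the hypothesis $p^->1$ are all correct.
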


Let now $\Omega\subset\R^n$ be a bounded domain with Lipschitz boundary. Given a function $f\in L^1(\Omega;\R^d)$, we define the \emph{local maximal function} $M_\Omega(f)\colon \Omega\to\R$ of $f$ as
$$M_\Omega(f)(x):=\sup_{\rho>0}\frac{1}{|\Omega\cap B_{\rho}(x)|}\int_{\Omega\cap B_{\rho}(x)}|f(y)|\,\de y\quad\text{for all $x\in \Omega$}.$$
We extend $f$ to the entire $\R^n$ by setting $f = 0$ outside $\Omega$ and we consider the maximal function $M(f)\colon\R^n\to\R$ defined in~\eqref{eq:maxfun}. We have
\begin{align*}
&M(f)(x)=\sup_{\rho\in (0,\diam(\Omega)]}\frac{1}{|B_{\rho}(x)|}\int_{\Omega\cap B_{\rho}(x)}|f(y)|\,\de y,\\
&M_\Omega(f)(x)=\sup_{\rho\in (0,\diam(\Omega)]}\frac{1}{|\Omega\cap B_{\rho}(x)|}\int_{\Omega\cap B_{\rho}(x)}|f(y)|\,\de y,
\end{align*}
as $\Omega\subset B_\rho(x)$ for all $x\in \Omega$ and $\rho>\diam(\Omega)$. Since $\Omega$ is a bounded domain with Lipschitz boundary, there exists a constant $\sigma=\sigma(\Omega,n)>0$ such that
\begin{equation}\label{eq:areaball}
|B_\rho(x)|\le \sigma|\Omega\cap B_{\rho}(x)|\quad\text{for all $x\in\Omega$ and $\rho\in (0,\diam(\Omega)]$},
\end{equation}
see for example~\cite[Chapter 5.1]{GM}. Hence, in view of~\eqref{eq:areaball}, 
\begin{equation}\label{eq:Omaxfun}
M_\Omega(f)(x)\le \sigma M(f)(x)\quad\text{for all $x\in \Omega$}.
\end{equation}
As a consequence of Propositions~\ref{prop:417}--\ref{prop:maxfun} and~\eqref{eq:Omaxfun}, we have the following. 

\begin{proposition}\label{prop:maxfun2}
Let $\Omega\subset\R^n$ be a bounded domain with Lipschitz boundary and let $p\in\mathcal P^{\log}_b(\Omega)$ with $p^->1$. There exists a constant $C=C(\Omega, n,p)>0$ such that for all $f\in L^{p(\cdot)}(\Omega;\R^d)$
$$\|M_\Omega(f)\|_{L^{p(\cdot)}(\Omega)}\le C\|f\|_{L^{p(\cdot)}(\Omega)}.$$
\end{proposition}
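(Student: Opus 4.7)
The proof will be essentially a direct composition of the ingredients already assembled in the paragraphs immediately preceding the statement. The plan is to reduce the local maximal operator on $\Omega$ to the standard Hardy--Littlewood maximal operator on $\R^n$, and to reduce the exponent $p$ on $\Omega$ to a globally $\log$-H\"older exponent on $\R^n$, so that Proposition~\ref{prop:maxfun} applies.

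Concretely, I would first invoke Proposition~\ref{prop:417} to obtain an extension $q\in\mathcal P^{\log}_b(\R^n)$ of $p$ with $q^-=p^->1$, $q^+=p^+<\infty$, and $c_{\log}(q)=c_{\log}(p)$. Given $f\in L^{p(\cdot)}(\Omega;\R^d)$, I extend it by zero outside $\Omega$ to a function $\tilde f\colon\R^n\to\R^d$. Since $q|_\Omega=p$, the modular of $\tilde f$ with respect to $q$ coincides with that of $f$ with respect to $p$, and thus (by the restriction property of the variable exponent norm listed right after its definition)
\[
\|\tilde f\|_{L^{q(\cdot)}(\R^n)}=\|f\|_{L^{p(\cdot)}(\Omega)},
\]
so in particular $\tilde f\in L^{q(\cdot)}(\R^n;\R^d)$.

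Next I would use the pointwise inequality~\eqref{eq:Omaxfun}, namely $M_\Omega(f)(x)\le\sigma\, M(\tilde f)(x)$ for all $x\in\Omega$, with $\sigma=\sigma(\Omega,n)$ coming from the density estimate~\eqref{eq:areaball} (this is where the Lipschitz regularity of $\partial\Omega$ enters). Combining monotonicity and the restriction property of $\|\cdot\|_{L^{p(\cdot)}(\cdot)}$ with Proposition~\ref{prop:maxfun} applied to $\tilde f$ with exponent $q$, I obtain
\[
\|M_\Omega(f)\|_{L^{p(\cdot)}(\Omega)}\le\sigma\|M(\tilde f)\|_{L^{q(\cdot)}(\Omega)}\le\sigma\|M(\tilde f)\|_{L^{q(\cdot)}(\R^n)}\le C\sigma\|\tilde f\|_{L^{q(\cdot)}(\R^n)}=C\sigma\|f\|_{L^{p(\cdot)}(\Omega)},
\]
which is the desired estimate.

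I do not expect any genuine obstacle here: each of the three ingredients (extension of the exponent, pointwise domination $M_\Omega\le\sigma M$, and boundedness of $M$ on $L^{q(\cdot)}(\R^n)$) is available as stated. The only detail deserving care is the dependence of the final constant. Proposition~\ref{prop:417} preserves $p^-$, $p^+$ and $c_{\log}(p)$, hence the constant furnished by Proposition~\ref{prop:maxfun} for $q$ depends only on $n$ and on the data of $p$; the factor $\sigma$ depends on $\Omega$ and $n$; the total constant therefore depends only on $\Omega$, $n$, and $p$, as required.
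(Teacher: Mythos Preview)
Your proposal is correct and follows exactly the approach indicated in the paper, which states the result ``as a consequence of Propositions~\ref{prop:417}--\ref{prop:maxfun} and~\eqref{eq:Omaxfun}''. Your write-up is simply a fleshed-out version of this one-line justification, and your remark on the dependence of the constant is accurate.
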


The third result is a version of the second Korn inequality for variable exponent.

\begin{proposition}[{\cite[Theorem~14.3.23]{DHHR}}]\label{prop:pxkorn}
Let $\Omega\subset\R^n$ be a bounded domain with Lipschitz boundary. Let $p\in\mathcal P^{\log}_b(\Omega)$ with $p^->1$. There exists a constant $C=C(\Omega,n,p)>0$ such that for all $u\in W^{1,p(\cdot)}(\Omega;\R^n)$ there exists a skew-symmetric matrix $S\in \R^{n\times n}_{\skw}$ satisfying
\begin{equation*} 
\|\nabla u-S\|_{L^{p(\cdot)}(\Omega)}\le C\|eu\|_{L^{p(\cdot)}(\Omega)}.
\end{equation*}
In particular, we can take $S=(\langle\nabla u\rangle_\Omega)_{\skw}\in\R^{n\times n}_{\skw}$.
\end{proposition}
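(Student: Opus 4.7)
The bulk of the statement (existence of some $S \in \R^{n\times n}_{\skw}$ with $\|\nabla u - S\|_{L^{p(\cdot)}(\Omega)} \le C\|eu\|_{L^{p(\cdot)}(\Omega)}$) is quoted directly from \cite[Theorem~14.3.23]{DHHR}, so my proposal would simply invoke that result. The only nontrivial content is the ``in particular'' clause, which asserts that the skew-symmetric matrix can be chosen as $\tilde S := (\langle \nabla u\rangle_\Omega)_{\skw}$, and for this I would argue by perturbing the $S$ delivered by DHHR through a constant skew-symmetric matrix.

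First, fix $u \in W^{1,p(\cdot)}(\Omega;\R^n)$ and let $S \in \R^{n\times n}_{\skw}$ be the matrix provided by \cite[Theorem~14.3.23]{DHHR}, so that
\[
\|\nabla u - S\|_{L^{p(\cdot)}(\Omega)} \le C \|eu\|_{L^{p(\cdot)}(\Omega)}.
\]
Since $\tilde S - S \in \R^{n\times n}_{\skw}$ is a constant matrix, I would bound the discrepancy by the triangle inequality
\[
\|\nabla u - \tilde S\|_{L^{p(\cdot)}(\Omega)} \le \|\nabla u - S\|_{L^{p(\cdot)}(\Omega)} + |\tilde S - S| \,\|1\|_{L^{p(\cdot)}(\Omega)}.
\]
The factor $\|1\|_{L^{p(\cdot)}(\Omega)}$ is controlled solely in terms of $|\Omega|$, $p^-$, and $p^+$ by Remark~\ref{rem:volume}, so the task reduces to bounding $|\tilde S - S|$.

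For that, since $S$ is skew-symmetric, $S = (S)_{\skw}$, and linearity of the mean and the skew projection yield
\[
\tilde S - S = (\langle \nabla u\rangle_\Omega)_{\skw} - (S)_{\skw} = \bigl(\langle \nabla u - S\rangle_\Omega\bigr)_{\skw},
\]
hence
\[
|\tilde S - S| \le |\langle \nabla u - S\rangle_\Omega| \le \frac{1}{|\Omega|} \int_\Omega |\nabla u(x) - S|\,\de x.
\]
Applying the variable-exponent H\"older inequality (Proposition~\ref{prop:spq} with $s\equiv 1$ and dual exponent $p'$),
\[
\int_\Omega |\nabla u - S|\,\de x \le 2\,\|1\|_{L^{p'(\cdot)}(\Omega)}\,\|\nabla u - S\|_{L^{p(\cdot)}(\Omega)},
\]
and combining with the DHHR bound on $\|\nabla u - S\|_{L^{p(\cdot)}(\Omega)}$ gives $|\tilde S - S| \le C \|eu\|_{L^{p(\cdot)}(\Omega)}$ with $C$ depending on $\Omega$, $n$, and $p$.

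Plugging this back into the triangle inequality closes the argument. I do not foresee any real obstacle: every step is a routine consequence of the variable-exponent tools already collected in the preliminary section, and no further use of the log-H\"older condition beyond what is already encoded in the quoted DHHR theorem is needed.
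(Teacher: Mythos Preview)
Your proposal is correct. The paper does not supply its own proof of this proposition; it is stated purely as a preliminary result quoted from \cite[Theorem~14.3.23]{DHHR}, so there is nothing further to compare against, and your short argument for the ``in particular'' clause (triangle inequality plus H\"older to control $|\tilde S - S|$) is exactly the kind of reduction the paper itself uses later, e.g.\ at the end of the proof of Theorem~\ref{thm:pxqxkorn}.
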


The fourth preliminary result concerns elliptic estimates for the solutions to Poisson problems with variable exponents.

\begin{proposition}[{\cite[Theorem~14.1.2]{DHHR}}]\label{prop:pxpoisson}
Let $\Omega\subset\R^n$ be a bounded domain with $C^{1,1}$ boundary. Let $p\in\mathcal P^{\log}_b(\Omega)$ with $p^->1$. For all functions $f\in L^{p(\cdot)}(\Omega;\R^d)$ there exists a unique strong solution $u\in W^{2,p(\cdot)}(\Omega;\R^d)$ to the problem
$$\begin{cases}
-\Delta u=f &\quad\text{in $\Omega$},\\
u=0 &\quad\text{on $\partial\Omega$}.
\end{cases}$$
Moreover, there exists a constant $C=C(\Omega,n,d,p)>0$ such that
\begin{equation}\label{eq:ell-est}
\|u\|_{W^{2,p(\cdot)}(\Omega)}\le C\|f\|_{L^{p(\cdot)}(\Omega)}.
\end{equation}
\end{proposition}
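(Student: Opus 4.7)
The plan is to obtain the result by combining classical Calderón--Zygmund theory on $C^{1,1}$ domains with the Rubio de Francia extrapolation machinery in the variable exponent setting, which is precisely where the log-Hölder hypothesis enters.

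First I would settle existence and uniqueness for the Poisson problem. Since $\Omega$ is bounded, $L^{p(\cdot)}(\Omega;\mathbb{R}^d) \hookrightarrow L^{p^-}(\Omega;\mathbb{R}^d)$ and in particular $f \in L^2$ whenever $p^- \ge 2$; otherwise one first approximates $f$ by $f_k \in C^\infty_c(\Omega;\mathbb{R}^d)$ and solves a sequence of classical problems. For each $k$, Lax--Milgram on $H^1_0(\Omega;\mathbb{R}^d)$ together with Agmon--Douglis--Nirenberg regularity on a $C^{1,1}$ domain yields a unique strong solution $u_k \in W^{2,q}(\Omega;\mathbb{R}^d) \cap W^{1,q}_0(\Omega;\mathbb{R}^d)$ for every fixed $q \in (1,\infty)$. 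Uniqueness for the limiting problem then follows from the maximum principle (or energy methods applied to the difference).

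The heart of the argument is the a priori estimate~\eqref{eq:ell-est}. Fix some $q_0 \in (1, p^-)$. The classical approach represents $\nabla^2 u$ in terms of $f$ via an operator $T$ which, on $\mathbb{R}^n$, reduces to compositions of Riesz transforms, and on a $C^{1,1}$ domain is obtained by combining interior Calderón--Zygmund estimates with boundary flattening and localization. The crucial point, for which the $C^{1,1}$ regularity is used, is that $T$ is a Calderón--Zygmund type operator bounded from $L^{q_0}(\Omega, w\,\de x)$ into itself for every weight $w$ in the Muckenhoupt class $A_{q_0}$, with constant depending only on $\Omega$, $n$, $d$, $q_0$ and the $A_{q_0}$-constant $[w]_{A_{q_0}}$. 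Together with the analogous lower-order bounds for $\nabla u$ and $u$ via Poincaré's inequality, this gives the full weighted $W^{2,q_0}$ estimate uniformly in $w \in A_{q_0}$.

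At this stage the variable exponent extrapolation theorem (see, e.g., the framework developed in \cite{DHHR}) applies: any pair of functions $(g,h)$ satisfying $\|g\|_{L^{q_0}(\Omega,w\de x)} \le C \|h\|_{L^{q_0}(\Omega,w\de x)}$ for all $w \in A_{q_0}$, with $C$ depending only on $[w]_{A_{q_0}}$, automatically satisfies $\|g\|_{L^{p(\cdot)}(\Omega)} \le C' \|h\|_{L^{p(\cdot)}(\Omega)}$ whenever $p \in \mathcal{P}^{\log}_b(\Omega)$ with $p^- > 1$. Applying this to $g = \nabla^2 u, \nabla u, u$ and $h = f$ yields~\eqref{eq:ell-est}. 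The main obstacle in this scheme is producing the \emph{uniform} weighted estimate in Step~2 on a bounded $C^{1,1}$ domain: the interior Riesz transform bounds are standard, but tracking the dependence only on $[w]_{A_{q_0}}$ through the boundary straightening and the reflection step requires care, which is precisely where the $C^{1,1}$ assumption on $\partial\Omega$ cannot be relaxed.
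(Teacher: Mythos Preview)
The paper does not give its own proof of this statement: Proposition~\ref{prop:pxpoisson} is simply quoted from \cite[Theorem~14.1.2]{DHHR} as a preliminary result, without any argument supplied. So there is no ``paper's proof'' against which to compare your proposal.

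That said, your outline is a reasonable sketch of how such a result is established, and is in the spirit of the approach in \cite{DHHR}: reduce to weighted $L^{q_0}$ estimates uniform in the $A_{q_0}$ constant, then extrapolate to $L^{p(\cdot)}$ via the Rubio de Francia machinery (which requires the log-H\"older condition to guarantee boundedness of the maximal operator on $L^{p(\cdot)}$). One point worth flagging: your existence paragraph is a bit loose. The approximation $f_k \to f$ in $L^{p(\cdot)}$ gives solutions $u_k$, but to pass to the limit and obtain $u \in W^{2,p(\cdot)}$ you need the a priori estimate \eqref{eq:ell-est} already in hand; so logically the estimate comes first, and existence in $W^{2,p(\cdot)}$ is deduced from it by density and completeness. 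Also, uniqueness via the maximum principle is fine componentwise, but the cleaner route in this setting is to observe that any $W^{2,p(\cdot)} \cap W^{1,p(\cdot)}_0$ solution of $-\Delta u = 0$ lies in $W^{2,p^-} \cap W^{1,p^-}_0$, where classical uniqueness applies.
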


 Finally, we recall the following localization techniques which are customary when dealing with variable exponents. A family $\mathcal Q$ of cubes $Q\subset\R^n$ is called locally $N$-finite, $N\in\N$, if it satisfies
$$\sum_{Q\in\mathcal Q}\chi_Q\le N\quad\text{a.e.\ in $\R^n$}.$$

\begin{proposition}[{\cite[Corollary~7.3.21]{DHHR}}]\label{prop:locglob}
 Let $p\in\mathcal P^{\log}_b(\R^n)$ and let $\mathcal Q$ be a family of locally $N$-finite cubes $Q\subset\R^n$. There exists a constant $C=C(n,p,N) \ge 1 $ such that for all $f\in L^{p(\cdot)}(\R^n)$
\begin{equation}\label{eq:locglob}
\frac{1}{C}\bigg\|\sum_{Q\in\mathcal Q}\chi_Q f\bigg\|_{L^{p(\cdot)}(\R^n)}\le\bigg\|\sum_{Q\in\mathcal Q}\chi_Q \frac{\|\chi_Q f\|_{L^{p(\cdot)}(\R^n)}}{\|\chi_Q\|_{L^{p(\cdot)}(\R^n)}}\bigg\|_{L^{p(\cdot)}(\R^n)}\le C\bigg\|\sum_{Q\in\mathcal Q}\chi_Q f\bigg\|_{L^{p(\cdot)}(\R^n)}. 
\end{equation}
\end{proposition}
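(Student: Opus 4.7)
The plan is to view the middle quantity $g := \sum_{Q \in \mathcal Q} \chi_Q \frac{\|\chi_Q f\|_{L^{p(\cdot)}(\R^n)}}{\|\chi_Q\|_{L^{p(\cdot)}(\R^n)}}$ as a piecewise constant surrogate for $f$, whose value on each cube $Q$ is a kind of $L^{p(\cdot)}$-average of $|f|$ on $Q$, and to compare it with $f$ pointwise via the maximal operator. Two ingredients drive the whole argument: the log-Hölder ``small cube'' estimate $|Q|^{p^-_Q - p^+_Q} \le C$ from Proposition~\ref{prop:smallQ}, which lets us freeze the exponent on each cube at the price of a harmless multiplicative constant; and the boundedness of the Hardy--Littlewood maximal operator on $L^{p(\cdot)}(\R^n)$ from Proposition~\ref{prop:maxfun}. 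The local $N$-finiteness of $\mathcal Q$ is used precisely to move from pointwise estimates on each $Q$ to a global one.

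For the upper inequality $\|g\|_{L^{p(\cdot)}(\R^n)} \le C \,\|\sum_{Q}\chi_Q f\|_{L^{p(\cdot)}(\R^n)}$, I would first show the pointwise control
\[
\frac{\|\chi_Q f\|_{L^{p(\cdot)}(\R^n)}}{\|\chi_Q\|_{L^{p(\cdot)}(\R^n)}} \le C\,M(f)(x) + C\,h_Q(x) \qquad \text{for every } x \in Q,
\]
for a small correction term $h_Q$ that is summable across the cubes (as in the standard log-Hölder key estimate of~\cite[Ch.~4]{DHHR}). The derivation uses Proposition~\ref{prop:modular} to split the $\|\cdot\|_{L^{p(\cdot)}}$-norm into modular inequalities, Remark~\ref{rem:volume} to compare $\|\chi_Q\|_{L^{p(\cdot)}}$ with $|Q|^{1/p^\pm_Q}$, and Proposition~\ref{prop:smallQ} to exchange $p^-_Q$ and $p^+_Q$ with $p(x)$. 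Summing over the locally $N$-finite family $\mathcal Q$ yields $g(x) \le C\,N\,M(f)(x) + C\,h(x)$, and then Proposition~\ref{prop:maxfun} (applied to $f$ and, if needed, to the corrector) gives the claimed bound.

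For the reverse inequality $\|\sum_Q \chi_Q f\|_{L^{p(\cdot)}(\R^n)} \le C\,\|g\|_{L^{p(\cdot)}(\R^n)}$, the natural strategy is duality. Writing $F := \sum_Q \chi_Q f$, its $L^{p(\cdot)}$-norm is realized by testing against $\varphi \in L^{p'(\cdot)}(\R^n)$ with $\|\varphi\|_{L^{p'(\cdot)}} \le 1$, via the generalized Hölder inequality (Proposition~\ref{prop:spq}). Splitting $\int F \varphi = \sum_Q \int_Q f \varphi$ and applying the generalized Hölder inequality cube by cube, each piece is bounded by $\|\chi_Q f\|_{L^{p(\cdot)}}\,\|\chi_Q \varphi\|_{L^{p'(\cdot)}}$; rewriting this as $(\|\chi_Q f\|_{L^{p(\cdot)}}/\|\chi_Q\|_{L^{p(\cdot)}}) \cdot (\|\chi_Q\|_{L^{p(\cdot)}}\,\|\chi_Q \varphi\|_{L^{p'(\cdot)}})$ and using that the second factor is comparable to $\|\chi_Q \varphi\|_{L^{p'(\cdot)}}/\|\chi_Q\|_{L^{p'(\cdot)}}\cdot|Q|$ up to log-Hölder corrections, one pairs $g$ against the analogous piecewise constant built from $\varphi$ and $p'(\cdot)$. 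Since $p' \in \mathcal P^{\log}_b(\R^n)$ with $(p')^-> 1$ as well, the upper inequality applied to the dual exponent closes the loop.

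The main obstacle is the delicate interplay between the variable exponent $p(x)$ and the cube-wise constants $p^\pm_Q$: all constants in the pointwise and summed estimates must depend only on $n$, $c_{\log}(p)$, $p^\pm$, and $N$, and not on the cubes themselves or their sizes. This is precisely what the log-Hölder condition, through Proposition~\ref{prop:smallQ}, is designed to ensure, but it requires care to keep the maximal-operator and duality constants uniform. Once this uniformity is in place, the two inequalities follow by routine manipulations with the modular/norm comparison of Proposition~\ref{prop:modular}.
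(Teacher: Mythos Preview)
The paper does not prove this proposition; it is quoted as \cite[Corollary~7.3.21]{DHHR} and used as a black box, so there is no ``paper's own proof'' to compare against.

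Your outline is in the right spirit, but there is a mismatch worth flagging. You build both inequalities on Proposition~\ref{prop:maxfun}, the boundedness of the maximal operator on $L^{p(\cdot)}$, which requires $p^->1$. The statement of Proposition~\ref{prop:locglob}, however, assumes only $p\in\mathcal P^{\log}_b(\R^n)$, with no lower bound on $p$ away from $1$; likewise the dual argument you propose needs $(p')^->1$, i.e.\ $p^+<\infty$, which is fine, but the direct half would fail for $p^-=1$ as written. The proof in \cite{DHHR} circumvents this by working with the so-called key estimate and modular inequalities directly, rather than invoking the full maximal theorem. In the present paper $p^->1$ is always in force wherever Proposition~\ref{prop:locglob} is applied, so the gap is harmless for the applications here, but your sketch does not recover the proposition in the generality stated.
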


\begin{proposition}[{\cite[Corollary~7.3.24]{DHHR}}]\label{prop:locglob2}
Let $p\in\mathcal P^{\log}_b(\R^n)$, let $\mathcal Q$ be a family of locally $N$-finite cubes $Q\subset\R^n$, and let $\{f_Q\}_{Q\in\mathcal Q}$ be a family of functions satisfying $f_Q\in L^{p(\cdot)}(Q)$ for all $Q\in\mathcal Q$. There exists a constant $C=C(n,p,N) \ge 1 $ such that for all $f\in L^{p(\cdot)}(\R^n)$
\begin{equation}\label{eq:locglob2}
\bigg\|\sum_{Q\in\mathcal Q}\chi_Q f_Q\bigg\|_{L^{p(\cdot)}(\R^n)}\le C\bigg\|\sum_{Q\in\mathcal Q}\chi_Q \frac{\|\chi_Q f_Q\|_{L^{p(\cdot)}(\R^n)}}{\|\chi_Q\|_{L^{p(\cdot)}(\R^n)}}\bigg\|_{L^{p(\cdot)}(\R^n)}. 
\end{equation}
\end{proposition}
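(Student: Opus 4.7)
The plan is to derive Proposition~\ref{prop:locglob2} from the already-available equivalence~\eqref{eq:locglob} by an $L^{p(\cdot)}$--$L^{p'(\cdot)}$ duality argument. Set
$$g:=\sum_{Q\in\mathcal Q}\chi_Q f_Q, \qquad h_Q:=\frac{\|\chi_Q f_Q\|_{L^{p(\cdot)}(\R^n)}}{\|\chi_Q\|_{L^{p(\cdot)}(\R^n)}}, \qquad \tilde g:=\sum_{Q\in\mathcal Q}\chi_Q h_Q,$$
so that the right-hand side of~\eqref{eq:locglob2} is $\|\tilde g\|_{L^{p(\cdot)}(\R^n)}$. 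Assuming first $p^->1$ (the case $p^-=1$ being recovered by truncating $p_\varepsilon:=\max\{p,1+\varepsilon\}$ and letting $\varepsilon\to 0$), the dual exponent $p'$ lies in $\mathcal P^{\log}_b(\R^n)$ and the usual norm-conjugate identity reduces the claim to a uniform estimate of $\int g\phi\,\de x$ by a constant multiple of $\|\tilde g\|_{L^{p(\cdot)}(\R^n)}$ over test functions $\phi$ with $\|\phi\|_{L^{p'(\cdot)}(\R^n)}\le 1$.

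The estimate is then carried out in two blocks. First, the $N$-finite overlap together with Fubini gives $\int g\phi=\sum_{Q}\int_Q f_Q\phi$, and H\"older's inequality (Proposition~\ref{prop:spq}) yields on each cube $\int_Q f_Q\phi\le 2\|\chi_Q f_Q\|_{L^{p(\cdot)}}\|\chi_Q\phi\|_{L^{p'(\cdot)}}$. Inserting the identity $\|\chi_Q f_Q\|_{L^{p(\cdot)}}=h_Q\|\chi_Q\|_{L^{p(\cdot)}}$ and the log-H\"older-based bound $\|\chi_Q\|_{L^{p(\cdot)}(\R^n)}\|\chi_Q\|_{L^{p'(\cdot)}(\R^n)}\le c(n,c_{\log}(p))|Q|$, and writing $h_Q|Q|=\int_Q h_Q\,\de x$ because $h_Q$ is constant on $Q$, summation in $Q$ gives
\begin{equation*}
\int_{\R^n}g\phi\,\de x\,\le\,C\int_{\R^n}\sum_{Q\in\mathcal Q}\chi_Q(x)\,h_Q\,\frac{\|\chi_Q\phi\|_{L^{p'(\cdot)}(\R^n)}}{\|\chi_Q\|_{L^{p'(\cdot)}(\R^n)}}\,\de x.
\end{equation*}
Second, the elementary pointwise inequality $\sum_{Q\ni x}a_Q b_Q\le\bigl(\sum_{Q\ni x}a_Q\bigr)\bigl(\sum_{Q\ni x}b_Q\bigr)$ for nonnegative reals (valid because each point belongs to at most $N$ cubes) dominates the integrand by $\tilde g(x)\,\psi(x)$, where $\psi:=\sum_{Q}\chi_Q\|\chi_Q\phi\|_{L^{p'(\cdot)}}/\|\chi_Q\|_{L^{p'(\cdot)}}$. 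A further H\"older step controls $\int\tilde g\,\psi$ by $2\|\tilde g\|_{L^{p(\cdot)}}\|\psi\|_{L^{p'(\cdot)}}$, while the upper inequality in~\eqref{eq:locglob}, applied to the exponent $p'$ and to the function $\phi$, yields $\|\psi\|_{L^{p'(\cdot)}(\R^n)}\le C\bigl\|\sum_Q\chi_Q\phi\bigr\|_{L^{p'(\cdot)}(\R^n)}\le CN\|\phi\|_{L^{p'(\cdot)}(\R^n)}$. Taking the supremum over $\phi$ and collecting constants produces~\eqref{eq:locglob2}.

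The main delicate point is the control of all constants solely in terms of $n$, $p$ and $N$. The application of~\eqref{eq:locglob} to the dual exponent requires that $c_{\log}(p')$ is governed by $c_{\log}(p)$, $p^-$, $p^+$, which follows from Remark~\ref{rem:H_logglob} combined with the explicit form $p'=p/(p-1)$; and the norm-product bound $\|\chi_Q\|_{L^{p(\cdot)}}\|\chi_Q\|_{L^{p'(\cdot)}}\le c|Q|$ is the place where log-H\"older continuity genuinely enters, as it is obtained from Proposition~\ref{prop:modular} together with the small-cube estimate provided by Proposition~\ref{prop:smallQ}. The reduction from $p^-=1$ to $p^->1$ is purely bookkeeping.
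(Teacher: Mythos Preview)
The paper does not give its own proof of this proposition: it is quoted verbatim from \cite[Corollary~7.3.24]{DHHR} as a preliminary tool, so there is nothing to compare against at the level of the present manuscript.

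Your duality argument is correct and self-contained for the case $p^->1$, which is the only case used later in the paper (Theorem~\ref{thm:pxrig} assumes $p^->1$). The chain of steps---norm-conjugate formula, H\"older on each cube, the log-H\"older bound $\|\chi_Q\|_{L^{p(\cdot)}}\|\chi_Q\|_{L^{p'(\cdot)}}\le c|Q|$, the trivial pointwise inequality $\sum a_Qb_Q\le(\sum a_Q)(\sum b_Q)$, and the application of the upper bound in~\eqref{eq:locglob} to $p'$---is sound, and the constants are tracked correctly through $n$, $p^-$, $p^+$, $c_{\log}(p)$, and $N$. The approach in \cite{DHHR} is instead based on modular ``key estimates'' developed in their Chapter~7 rather than on duality; your route is shorter once Proposition~\ref{prop:locglob} is taken for granted, at the price of needing $p^->1$ to make $p'\in\mathcal P^{\log}_b(\R^n)$.

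The only point that is genuinely sketchy is the reduction of the case $p^-=1$ via $p_\varepsilon=\max\{p,1+\varepsilon\}$: on $\R^n$ there is no embedding $L^{p_\varepsilon(\cdot)}\hookrightarrow L^{p(\cdot)}$, so passing to the limit in~\eqref{eq:locglob2} as $\varepsilon\to 0$ requires an additional argument (e.g.\ a monotone-convergence step at the level of modulars). Since the paper never uses the proposition with $p^-=1$, you could simply add the hypothesis $p^->1$ and drop that sentence.
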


\begin{remark}[Dependence on $p$]\label{rem:p}
 In Propositions~\ref{prop:maxfun}--\ref{prop:locglob2}, the constant $C$ depends on $p$ only via $p^-$, $p^+$, and $c_{\rm log}(p)$, see~\cite{DHHR}.
\end{remark}

%---------------------------------------------------
% Rigidity on Sobolev spaces with variable exponent
%---------------------------------------------------

\section{Geometric rigidity on Sobolev spaces with variable exponent}\label{sec:2}

The goal of this section is to extend the rigidity result of~\cite[Theorem 3.1]{FJM} to the case of a variable exponent $p$. More precisely, we prove the following theorem.

\begin{theorem}[Geometric rigidity for variable exponents]\label{thm:pxrig}
Let $\Omega\subset\R^n$ be a bounded domain with Lipschitz boundary. Let $p\in\mathcal P^{\log}_b(\Omega)$ with $p^->1$. There exists a constant $C(\Omega,n,p)>0$ such that for all $u\in W^{1,p(\cdot)}(\Omega;\R^n)$ we can find a constant rotation $R\in SO(n)$ satisfying
\begin{equation}\label{eq:pxrig}
\|\nabla u-R\|_{L^{p(\cdot)}(\Omega)}\le C\|d(\nabla u,SO(n))\|_{L^{p(\cdot)}(\Omega)}.
\end{equation}
 In particular, we can take $R\in SO(n)$ such that 
\begin{equation}\label{eq:R-mean}
|R-\langle\nabla u\rangle_\Omega|=d(\langle\nabla u\rangle_\Omega,SO(n)).
\end{equation}
\end{theorem}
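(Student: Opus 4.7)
The plan is to extend the Friesecke–James–M\"uller strategy from \cite{FJM, conti.schweizer}, using log-H\"older continuity of $p$ to keep constants uniform across small scales.

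First, I would prove the rigidity estimate on a unit cube $Q_0=(0,1)^n$, with constant depending only on $n$, $p^\pm$, and $c_{\log}(p)$. The key technique is a Lusin-type truncation: for a level $\lambda>0$, set $E_\lambda:=\{x\in Q_0:M_{Q_0}(\nabla u)(x)>\lambda\}$ and construct a Lipschitz extension $\tilde u$ of $u|_{Q_0\setminus E_\lambda}$ with $\|\nabla\tilde u\|_\infty\le C\lambda$. Applying the classical rigidity of \cite{conti.schweizer} at exponent $p^-$ to $\tilde u$ yields a rotation $R$; the error on $E_\lambda$ is controlled in $L^{p(\cdot)}(Q_0)$ by $\|\chi_{E_\lambda}(M_{Q_0}(\nabla u)+M_{Q_0}(d(\nabla u,SO(n))))\|_{L^{p(\cdot)}}$, which is bounded via the maximal-operator estimate Proposition~\ref{prop:maxfun2}. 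As emphasized in the introduction, the Lusin step must be phrased in norm (not modular) form, since the two are not equivalent in $L^{p(\cdot)}$.

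Transferring to arbitrary cubes $Q\subset\Omega$ of side $\ell$ via the rescaling $\psi_{x_0,\ell}$ of Proposition~\ref{prop:change}, Proposition~\ref{prop:smallQ} ensures that $|Q|^{p^-_Q-p^+_Q}$ is uniformly bounded, so the local rigidity constant stays under control. Then cover $\Omega$ by a Whitney-type locally $N$-finite family $\{Q_j\}$ of such cubes, producing rotations $R_j$ on each. Proposition~\ref{prop:locglob2} assembles the local estimates into
$$\bigg\|\sum_j\chi_{Q_j}(\nabla u-R_j)\bigg\|_{L^{p(\cdot)}(\Omega)}\le C\,\|d(\nabla u,SO(n))\|_{L^{p(\cdot)}(\Omega)}.$$
To collapse the piecewise-constant rotation field to a single $R\in SO(n)$, I would use that on overlapping cubes $Q_i,Q_j$ the rigidity on $Q_i\cup Q_j$ controls $|R_i-R_j|$ by the $L^{p(\cdot)}(Q_i\cup Q_j)$ norm of $d(\nabla u,SO(n))$; chaining these estimates along adjacencies in the cover and reassembling through Propositions~\ref{prop:locglob}--\ref{prop:locglob2} yields \eqref{eq:pxrig} for a suitable $R\in SO(n)$. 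For \eqref{eq:R-mean}, given any valid $R_0$, the projection $R$ of $\langle\nabla u\rangle_\Omega$ onto $SO(n)$ satisfies $|R-R_0|\le 2|R_0-\langle\nabla u\rangle_\Omega|$, which is dominated by the RHS and absorbed into the constant.

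The main obstacle is the chaining step. In the constant-exponent case, the homogeneity of $L^p$ simplifies the aggregation of differences $|R_i-R_j|$. Here $\|\chi_Q\|_{L^{p(\cdot)}}$ is not a simple power of $|Q|$ (cf.\ Remark~\ref{rem:volume}), so the bookkeeping must be routed through the log-H\"older modulus and Proposition~\ref{prop:locglob2} rather than through naive scaling. The same subtlety underlies the need for the norm-form Lusin argument and the explicit use of the continuity of the maximal operator (Proposition~\ref{prop:maxfun2}) in the local step.
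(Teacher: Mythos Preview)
Your local step has a genuine gap. Applying the classical rigidity at the fixed exponent $p^-$ to the truncated map $\tilde u$ yields only
\[
\|\nabla\tilde u-R\|_{L^{p^-}(Q_0)}\le C\,\|d(\nabla\tilde u,SO(n))\|_{L^{p^-}(Q_0)},
\]
and on a bounded set the embedding runs $L^{p(\cdot)}\hookrightarrow L^{p^-}$, not the other way. Nothing in your outline explains how to promote this $L^{p^-}$ bound to an $L^{p(\cdot)}$ bound on $\nabla u-R$ on the good set $Q_0\setminus E_\lambda$; the maximal-operator argument you describe only handles the bad set $E_\lambda$. The paper closes this gap by a harmonic--approximation step: for a Lipschitz $v$ one writes $-\Delta v=\div(\cof\nabla v-\nabla v)$, solves the associated Poisson problem and invokes the $L^{p(\cdot)}$ Calder\'on--Zygmund estimate (Proposition~\ref{prop:pxpoisson}) to peel off a correction $\psi$ with $\|\nabla\psi\|_{L^{p(\cdot)}(Q_0)}\le C\|d(\nabla v,SO(n))\|_{L^{p(\cdot)}(Q_0)}$, and then the remainder $\varphi=v-\psi$ is \emph{harmonic}. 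One applies the $p^-$ rigidity to $\varphi$, and the mean--value property upgrades $\|\nabla\varphi-R\|_{L^{p^-}(Q_0)}$ to $\|\nabla\varphi-R\|_{L^\infty(Q_0')}$ and hence to $\|\nabla\varphi-R\|_{L^{p(\cdot)}(Q_0')}$ on a \emph{smaller concentric} cube $Q_0'$. This inner/outer cube structure is essential and is absent from your sketch; without it there is no mechanism to reach the $L^{p(\cdot)}$ norm.

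Your global step also differs from the paper's. Rather than chaining rotation differences along overlaps, the paper builds the smooth field $\hat R:=\sum_i R_i\phi_i$ from a partition of unity and applies a weighted Poincar\'e inequality in $L^{p(\cdot)}$ (Proposition~\ref{prop:poincare}) to obtain a constant matrix $A$ with $\|\hat R-A\|_{L^{p(\cdot)}(\Omega)}\le C\|d(\cdot,\partial\Omega)\nabla\hat R\|_{L^{p(\cdot)}(\Omega)}$; then $R$ is the projection of $A$ onto $SO(n)$. The quantity $d(\cdot,\partial\Omega)\nabla\hat R$ is controlled via $\sum_i\nabla\phi_i=0$ and the Whitney scaling, and Propositions~\ref{prop:locglob}--\ref{prop:locglob2} handle the reassembly. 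This sidesteps entirely the chain--of--overlaps bookkeeping you flag as the main obstacle; your chaining approach might be made to work, but you would need to explain how the local pairwise bounds $|R_i-R_j|\,\|\chi_{Q_i\cap Q_j}\|_{L^{p(\cdot)}}$ sum to a single global $L^{p(\cdot)}$ estimate without losing control of the variable--exponent structure.
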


The proof of Theorem~\ref{thm:pxrig} follows the same structure of~\cite{FJM} and it proceeds according to the following steps:
\begin{itemize}
\item In Proposition~\ref{prop:Q0pxrig} we first prove the result locally in cubes $Q$. This relies on the Lusin approximation of Lemma~\ref{lem:lusin} and on the rigidity result in $W^{1,p^-}(Q;\R^n)$. 
\item Then in Proposition~\ref{prop:Qpxrig} we show that the rigidity constant can be taken independently of the size of the cube. This makes use of the $\log$-Hölder condition. 
\item Afterwards, we combine a weighted Poincaré inequality (Proposition~\ref{prop:poincare}) with Proposition~\ref{prop:locglob} to pass from cubes $Q$ to every bounded domain $\Omega$ with Lipschitz boundary.
\end{itemize}

The first step relies on a classical truncation argument, which is the Lusin approximation. We state in a form that is suitable for our purposes.

\begin{lemma}[Lusin approximation]\label{lem:lusin}
Let $\Omega\subset\R^n$ be a bounded domain with Lipschitz boundary. There exists a constant $C=C(\Omega,n,d)>0$ such that for all $u\in W^{1,1}(\Omega;\R^d)$ and for all $\lambda>0$ there exists a Lipschitz function $v\colon \Omega\to \R^d$ satisfying
\begin{itemize}
\item [(i)] $\|\nabla v\|_{L^\infty(\Omega)} \le C\lambda$,
\item [(ii)] $\{x\in \Omega\,:\,u(x)\neq v(x)\}\subseteq \{x\in \Omega\,:\,M_\Omega(\nabla u)(x)>\lambda\}$ (up to sets of measure zero).
\item [(iii)] $\displaystyle|\{x\in \Omega\,:\,u(x)\neq v(x)\}|\le C\int_{\{x\in\Omega\,:\,|\nabla u(x)|>\lambda\}}\frac{|\nabla u(z)|}{\lambda}\,\de z.$
\end{itemize}
\end{lemma}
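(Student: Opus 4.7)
The plan is to follow the classical Lusin approximation scheme (cf.\ \cite[Proposition~A.1]{FJM} and the discussion in \cite{DHHR}): derive a pointwise bound of $u$ by the maximal function of $\nabla u$, use it to exhibit a Lipschitz restriction of $u$ on the sublevel set of $M_\Omega(\nabla u)$, and then extend this restriction to all of $\Omega$. The core ingredient is the pointwise estimate
\begin{equation*}
|u(x) - u(y)| \le C |x-y| \bigl( M_\Omega(\nabla u)(x) + M_\Omega(\nabla u)(y) \bigr) \quad \text{for a.e.\ $x,y \in \Omega$},
\end{equation*}
with $C = C(\Omega, n, d)$. On a bounded Lipschitz domain this is standard and can be obtained by first extending $u$ to some $\tilde u \in W^{1,1}(\R^n;\R^d)$ with compact support via a Sobolev extension operator, applying the classical pointwise inequality on $\R^n$, and transferring the bound back to $\Omega$ by comparing $M(\nabla \tilde u)$ with $M_\Omega(\nabla u)$ through \eqref{eq:areaball}.

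With this pointwise bound at hand, set $E_\lambda := \{x \in \Omega : M_\Omega(\nabla u)(x) \le \lambda\}$. The inequality above shows that the precise representative of $u$ is $2C\lambda$-Lipschitz on a set of full measure in $E_\lambda$. A componentwise McShane extension (or Kirszbraun's theorem) then produces a Lipschitz map $v\colon \Omega \to \R^d$ coinciding with $u$ on $E_\lambda$ and satisfying $\|\nabla v\|_{L^\infty(\Omega)} \le C \lambda$, which proves (i). Moreover, by construction $\{x \in \Omega : u(x)\ne v(x)\} \subseteq \Omega \setminus E_\lambda = \{M_\Omega(\nabla u) > \lambda\}$ (up to a set of measure zero), which is exactly (ii).

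For (iii), in view of (ii) it suffices to estimate $|\{M_\Omega(\nabla u) > \lambda\}|$. The plan is to split $\nabla u = \nabla u \, \chi_{\{|\nabla u| \le \lambda\}} + \nabla u \, \chi_{\{|\nabla u| > \lambda\}}$: the first summand has $L^\infty$-norm at most $\lambda$, so that its $M_\Omega$-image is bounded by $\sigma\lambda$ (with $\sigma$ as in \eqref{eq:areaball}), whereas for the second summand I invoke the classical weak-type $(1,1)$ estimate for the maximal operator. After a harmless relabeling of $\lambda$ absorbed into $C$, this gives
\begin{equation*}
|\{x \in \Omega : M_\Omega(\nabla u)(x) > \lambda\}| \le \frac{C}{\lambda} \int_{\{|\nabla u| > \lambda\}} |\nabla u(z)| \, \de z,
\end{equation*}
and combining with (ii) yields (iii).

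The main technical issue is to keep the constant in the pointwise estimate of Step~1 intrinsic to $\Omega$, that is, depending only on $\Omega, n, d$ and expressed through the local operator $M_\Omega$ rather than through the maximal function of an ambient extension. This requires that the chaining argument between $x$ and $y$ stays (comparably) inside $\Omega$, and is precisely where the Lipschitz regularity of $\partial \Omega$ enters, via a uniform interior cone condition and the geometric comparison \eqref{eq:areaball}. Once this point is settled, the remaining pieces (McShane extension and the weak-type $(1,1)$ inequality for the maximal function) are entirely classical.
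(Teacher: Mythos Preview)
Your outline is correct and matches the paper's approach, which likewise points to \cite[Proposition~A.1]{FJM} carried out directly on the Lipschitz domain via \eqref{eq:areaball} together with a uniform Poincar\'e inequality on the sets $\Omega\cap B_\rho(x)$. One caution: the shortcut through a Sobolev extension $\tilde u$ does not yield $M(\nabla\tilde u)\le C\,M_\Omega(\nabla u)$ from \eqref{eq:areaball} alone (that estimate goes the other way, and $\nabla\tilde u\neq(\nabla u)\chi_\Omega$ in general), so the intrinsic chaining argument you describe in your last paragraph is the route to take---and is precisely what the paper invokes.
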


\begin{proof}
The proof is analogous to the one of~\cite[Proposition~A.1]{FJM}, see also~\cite[Sections 6.6.2 and 6.6.3]{EG}. It is enought to repeat the argument used for cubes in Step 1 of~\cite[Proposition~A.1]{FJM} directly for Lipschitz domains by using~\eqref{eq:areaball} and the fact that we can find $r_0=r_0(\Omega)>0$ such that the Poincaré inequality holds in $\Omega\cap B_\rho(x)$ with a constant independent of $x\in \Omega$ and $\rho\in(0,r_0]$, see for example~\cite[Lemma~8.2.13]{DHHR}. 
\end{proof}

\begin{remark}\label{rem:on condition (ii)}
We point out that the Lusin approximation used in~\cite{FJM} states only (i) and (iii). In our paper instead, we need to explicitly point out the inclusion (ii), as condition (iii) is not enough to deduce the rigidity result of Theorem~\ref{thm:pxrig}, and later of Theorem~\ref{thm:pxqxrig}. This is basically due to the fact that inequality~\eqref{eq:pxrig} is stated in terms of the $L^{p(\cdot)}$-norm, while condition (iii) is given in terms of integrals. As underlined in~\cite{DHHR} for other functional inequalities, in a variable exponent setting one cannot in general deduce sharp inequalities between norms from those for the integrals, and vice versa. Inclusion (ii), instead, combined with continuity of the maximal operator, is enough to recover the crucial estimate~\eqref{eq:nablauv_est2} below. 
We keep however condition (iii) in the statement, as it is useful for the derivation of a rigidity estimate (Theorem~\ref{thm:gpxrig}) which is a main ingredient for the $\Gamma$-convergence result of Section~\ref{sec:4}, see also Remark~\ref{rem:norm-mod}. For general Lipschitz truncation results in Sobolev spaces with variable exponent, whose proof employs a similar condition to (ii), we refer the reader to~\cite{Diening:LipTruncation}. 
\end{remark}

 We start with the local rigidity estimate on a cube $Q$. We first prove the result for Sobolev functions with uniformly bounded gradients, and then we use conditions (i) and (ii) of the Lusin approximation to extend it to all Sobolev functions in $W^{1,p(\cdot)}(Q;\R^n)$. 

\begin{proposition}\label{prop:Q0pxrig}
 Let $r_0>0$ and let $Q_0:=(-r_0,r_0)^n$ and $Q_0':=(-\frac{r_0}{2},\frac{r_0}{2})^n$. Let $p\in\mathcal P^{\log}_b(Q_0)$ with $p^->1$. There exists a constant $C(r_0,n,p)>0$ such that for all $u\in W^{1,p(\cdot)}(Q_0;\R^n)$ we can find a constant rotation $R\in SO(n)$ satisfying
\begin{equation}\label{eq:Q0pxrig}
\|\nabla u-R\|_{L^{p(\cdot)}(Q_0')}\le C\|d(\nabla u,SO(n))\|_{L^{p(\cdot)}(Q_0)}.
\end{equation}
\end{proposition}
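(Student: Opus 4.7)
The plan, as suggested by the authors, is to proceed in two stages. First, I would establish~\eqref{eq:Q0pxrig} under the additional assumption that $u$ has uniformly bounded gradient, reducing the claim to the classical $L^{p^-}$ rigidity of~\cite{conti.schweizer} (available since $p^->1$). Then I would use the Lusin approximation in Lemma~\ref{lem:lusin} to reduce the general $W^{1,p(\cdot)}(Q_0;\R^n)$ case to the bounded-gradient one.

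For the bounded-gradient step, given $v \in W^{1,\infty}(Q_0;\R^n)$ with $\|\nabla v\|_{L^\infty(Q_0)}\le M$, the classical $L^{p^-}$ rigidity on $Q_0$ yields $R \in SO(n)$ with
\[
\|\nabla v - R\|_{L^{p^-}(Q_0)}\le C_2\,\|d(\nabla v, SO(n))\|_{L^{p^-}(Q_0)}.
\]
Since $|\nabla v - R|\le M + \sqrt{n}$ pointwise, the comparison
\[
|\nabla v - R|^{p(x)} \le \max\bigl(1, (M + \sqrt{n})^{p^+ - p^-}\bigr)\,|\nabla v - R|^{p^-}
\]
gives a modular bound for $|\nabla v - R|$ in terms of that of $d(\nabla v, SO(n))$. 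Combining this with the continuous embedding $L^{p(\cdot)}(Q_0) \hookrightarrow L^{p^-}(Q_0)$ (valid as $|Q_0|<\infty$) and the norm--modular passage of Proposition~\ref{prop:modular} yields the desired $L^{p(\cdot)}$ bound for $v$, with a constant depending on $M$, $r_0$, $n$, and $p$.

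For the Lusin reduction, I would apply Lemma~\ref{lem:lusin} to $u$ at a level $\lambda$ (to be chosen in terms of $\|d(\nabla u, SO(n))\|_{L^{p(\cdot)}(Q_0)}$), obtaining a Lipschitz $v$ with $\|\nabla v\|_{L^\infty(Q_0)}\le C_1\lambda$ and $\{u\neq v\}\subseteq\{M_{Q_0}(\nabla u)>\lambda\}$. The bounded-gradient result applied to $v$ yields $R\in SO(n)$. Since $\nabla u = \nabla v$ a.e.\ on $\{u=v\}$, one writes
\[
\nabla u - R = (\nabla v - R) + (\nabla u - \nabla v)\,\chi_{\{u\neq v\}}
\]
and estimates both summands in $L^{p(\cdot)}(Q_0')$. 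For the bad-set piece, the crucial bound
\[
\|\chi_{\{u\neq v\}}\|_{L^{p(\cdot)}(Q_0')} \le \tfrac{1}{\lambda}\,\|M_{Q_0}(\nabla u)\|_{L^{p(\cdot)}(Q_0)} \le \tfrac{C}{\lambda}\,\|\nabla u\|_{L^{p(\cdot)}(Q_0)}
\]
follows from the inclusion (ii) of Lemma~\ref{lem:lusin} together with the continuity of the maximal operator in $L^{p(\cdot)}$ (Proposition~\ref{prop:maxfun2}). The pointwise inequality $|\nabla u|\le d(\nabla u, SO(n)) + \sqrt{n}$ (from the triangle inequality using $|R'|=\sqrt{n}$ for any $R'\in SO(n)$) then replaces $\|\nabla u\|_{L^{p(\cdot)}}$ by $\|d(\nabla u, SO(n))\|_{L^{p(\cdot)}}$ up to a constant depending only on $r_0$, $n$, $p$; analogous estimates control $|\nabla u|\chi_{\{u\neq v\}}$ and $|\nabla v|\chi_{\{u\neq v\}}$.

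The main obstacle is the competition between the two effects of the Lusin level $\lambda$: larger $\lambda$ inflates the $L^\infty$ bound of $\nabla v$ and hence the constant from the bounded-gradient step, while smaller $\lambda$ enlarges the bad set $\{u\neq v\}$. A careful balancing, choosing $\lambda$ as a suitable power of $\|d(\nabla u, SO(n))\|_{L^{p(\cdot)}(Q_0)}$, is needed to recover the linear estimate~\eqref{eq:Q0pxrig}. A further delicate point, highlighted in Remark~\ref{rem:on condition (ii)}, is that in $L^{p(\cdot)}$ the modular lacks homogeneity, so the passage between modular and norm estimates is non-linear; the inclusion (ii) of Lemma~\ref{lem:lusin} (rather than merely (iii)) is precisely what enables norm-level estimates through the maximal operator, with the log-H\"older hypothesis on $p$ entering implicitly through Proposition~\ref{prop:maxfun2}.
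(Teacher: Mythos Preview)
Your bounded-gradient step has a genuine gap. The chain you propose yields only a \emph{modular} bound
\[
\int_{Q_0}|\nabla v-R|^{p(x)}\,\de x\le C(M)\,\|d(\nabla v,SO(n))\|_{L^{p^-}(Q_0)}^{p^-}\le C(M)\,\|d(\nabla v,SO(n))\|_{L^{p(\cdot)}(Q_0)}^{p^-},
\]
and converting the left side back to a norm via Proposition~\ref{prop:modular} gives, when the right side is small, at best $\|\nabla v-R\|_{L^{p(\cdot)}(Q_0)}\le C(M)\,\|d(\nabla v,SO(n))\|_{L^{p(\cdot)}(Q_0)}^{p^-/p^+}$. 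Since $p^-/p^+<1$ in general, this is not the linear estimate~\eqref{eq:Q0pxrig}. The pointwise bound $|\nabla v-R|\le M+\sqrt{n}$ lets you go from $L^{p^-}$ to $L^{p(\cdot)}$ only at the modular level, and the norm--modular passage is precisely where homogeneity is lost.

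The paper avoids this by never passing through the modular. Using $-\Delta v=\div(\cof\nabla v-\nabla v)$ and $|\cof A-A|\le C(M)\,d(A,SO(n))$ for $|A|\le M$, one solves a Poisson problem on a ball containing $Q_0$ via Proposition~\ref{prop:pxpoisson} to obtain $\psi\in W^{1,p(\cdot)}$ with $\|\nabla\psi\|_{L^{p(\cdot)}(Q_0)}\le C\|d(\nabla v,SO(n))\|_{L^{p(\cdot)}(Q_0)}$ directly at the norm level. The remainder $\varphi:=v-\psi$ is \emph{harmonic}, so after applying classical $L^{p^-}$ rigidity to $\varphi$, the mean value property gives the chain $\|\nabla\varphi-R\|_{L^{p(\cdot)}(Q_0')}\le C\|\nabla\varphi-R\|_{L^\infty(Q_0')}\le C\|\nabla\varphi-R\|_{L^1(Q_0)}\le C\|\nabla\varphi-R\|_{L^{p^-}(Q_0)}$. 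This interior $L^\infty$ bound is what makes the passage from $L^{p^-}$ back to $L^{p(\cdot)}$ linear, and it is exactly why the inner cube $Q_0'$ appears in the statement.

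For the Lusin step, no balancing of $\lambda$ against the data is needed: one simply takes the fixed value $\lambda=2\sqrt{n}$, so that $M=2C\sqrt{n}$ and the bounded-gradient constant $C(M,r_0,n,p)$ are fixed once and for all. On $\{|\nabla u|>2\sqrt{n}\}$ one has $|\nabla u|\le 2\,d(\nabla u,SO(n))$, and by (ii) of Lemma~\ref{lem:lusin} the bad set lies in $\{M_{Q_0}(d(\nabla u,SO(n)))>\sqrt{n}\}$, whence $\|\sqrt{n}\,\chi_{\{u\neq v\}}\|_{L^{p(\cdot)}}\le\|M_{Q_0}(d(\nabla u,SO(n)))\|_{L^{p(\cdot)}}$ and Proposition~\ref{prop:maxfun2} finishes the job. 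Your proposed data-dependent choice of $\lambda$ would make $C(M)$ blow up and does not in any case repair the non-linearity from the first step.
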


\begin{proof}
In view of Proposition~\ref{prop:417}, without loss of generality we may assume that $p\in \mathcal P^{\log}_b(\R^n)$ with $p^->1$. We start by proving that for every $M>0$ and functions $v\in W^{1,\infty}(Q_0;\R^n)$ with $\|\nabla v\|_{L^\infty(Q_0)}\le M$ there exists a constant $C=C(M,r_0,n,p)>0$ and a rotation $R\in SO(n)$ satisfying
\begin{equation}\label{eq:MQpxrig}
\|\nabla v-R\|_{L^{p(\cdot)}(Q_0')}\le C\|d(\nabla v,SO(n))\|_{L^{p(\cdot)}(Q_0)}.
\end{equation}

The function $A\mapsto |\cof A-A|^2$ is smooth and non negative on $\R^{n\times n}$ and it vanishes on $SO(n)$. Hence, there exists a constant $C=C(M,n)>0$ such that 
\begin{equation}\label{eq:cofA_est}
|\cof A-A|\le C d(A,SO(n))\quad\text{for all $A\in\R^{n\times n}$ with $|A|\le M$}.
\end{equation}
Since $\div \cof\nabla v=0$ in $\mathcal D'(Q_0;\R^n)$, the function $v$ satisfies
$$-\Delta v=\div(\cof\nabla v-\nabla v)\quad\text{in $\mathcal D'(Q_0;\R^n)$}.$$
We define $F\colon B_{2\sqrt{n}r_0}(0)\to \R^{n\times n}$ as
\begin{equation*}
F(x):=
\begin{cases}
\cof\nabla v(x)-\nabla v(x)&\text{if $x\in Q_0$},\\
0&\text{if $x\in B_{2\sqrt{n}r_0}(0)\setminus Q_0$}.
\end{cases}
\end{equation*}
 Since $F\in L^\infty(B_{2\sqrt{n}r_0}(0);\R^{n\times n})\subset L^{p(\cdot)}(B_{2\sqrt{n}r_0}(0);\R^{n\times n})$, by Proposition~\ref{prop:pxpoisson} there exists a unique strong solution $\Psi\in W^{2,p(\cdot)}(B_{2\sqrt{n}r_0}(0),\R^{n\times n})$ to 
$$\begin{cases}
-\Delta \Psi=F&\text{in $B_{2\sqrt{n}r_0}(0)$},\\
\Psi=0&\text{on $\partial B_{2\sqrt{n}r_0}(0)$}.
\end{cases}$$
In particular, the function $\psi:=\div\Psi$ lies in $ W^{1,p(\cdot)}(Q_0;\R^n)$ and by~\eqref{eq:ell-est} and~\eqref{eq:cofA_est} we have
\begin{equation}\label{eq:psi_est}
\begin{aligned}
\|\nabla \psi\|_{L^{p(\cdot)}(Q_0)}&\le \|\nabla^2\Psi\|_{L^{p(\cdot)}(B_{2\sqrt{n}r_0}(0))}\le C\|F\|_{L^{p(\cdot)}(B_{2\sqrt{n}r_0}(0))}\\
&=C\|\cof\nabla v-\nabla v\|_{L^{p(\cdot)}(Q_0)}\le C\|d(\nabla v,SO(n))\|_{L^{p(\cdot)}(Q_0)}
\end{aligned}
\end{equation}
for a constant $C= C(M,r_0,n,p)>0$. The function $\varphi:=v-\psi$ is harmonic in $Q_0$ since
$$-\Delta \varphi=-\Delta v+\Delta \psi=-\Delta v+\div\Delta \Psi=0\quad\text{in $\mathcal D'(Q_0;\R^n)$}.$$
Hence, it can be represented by a $C^\infty$ function in $Q_0$ by Weyl's lemma. 

 Since $\varphi\in W^{1,p(\cdot)}(Q_0;\R^n)\subseteq W^{1,p^-}(Q_0;\R^n)$, by the rigidity result in $W^{1,p^-}(Q_0;\R^n)$ (see~\cite[Section 2.4]{conti.schweizer}) and~\eqref{eq:psi_est} there exists a rotation $R\in SO(n)$ satisfying
\begin{equation}
\begin{aligned}
\|\nabla \varphi-R\|_{L^{p^-}(Q_0)}&\le C\|d(\nabla\varphi,SO(n))\|_{L^{p^-}(Q_0)}\le C\|d(\nabla\varphi,SO(n))\|_{L^{p(\cdot)}(Q_0)}\\
&\le C(\|d(\nabla v,SO(n))\|_{L^{p(\cdot)}(Q_0)}+\|\nabla \psi\|_{L^{p(\cdot)}(Q_0)})\\
&\le C\|d(\nabla v,SO(n))\|_{L^{p(\cdot)}(Q_0)}
\end{aligned}
\end{equation}
for a constant $C=C(M,r_0,n,p)>0$. Thanks to the fact that $\nabla\varphi-R$ is harmonic in $Q_0$, we can use the mean value property to derive
\begin{equation}\label{eq:mean-harm}
\|\nabla \varphi-R\|_{L^{p(\cdot)}(Q_0')}\le C\|\nabla \varphi-R\|_{L^\infty(Q_0')}\le C\|\nabla \varphi-R\|_{L^1(Q_0)}\le C\|\nabla \varphi-R\|_{L^{p(\cdot)}(Q_0)} 
\end{equation}
for a constant $C=C(M,r_0,n,p)>0$. By combining~\eqref{eq:psi_est}--\eqref{eq:mean-harm} we deduce~\eqref{eq:MQpxrig}.

Finally, let $u\in W^{1,p(\cdot)}(Q_0;\R^n)$ and let $v\in W^{1,\infty}(Q_0;\R^n)$ be the function given by Lemma~\ref{lem:lusin} with $\lambda=2\sqrt{n}$. Then, there exists a constant $C=C(Q_0,n)>0$ such that
$$\|\nabla v\|_{L^\infty(Q_0)}\le 2\sqrt{n}C,\quad \{x\in Q_0\,:\, u(x)\neq v(x)\}\overset{a.e.}{\subseteq}\{x\in Q_0\,:\, M_{Q_0}(\nabla u)(x)>2\sqrt{n}\}.$$
Since 
\begin{equation}\label{eq:est2}
|\nabla u|\le d(\nabla u,SO(n))+\sqrt{n},\quad M_{Q_0}(\nabla u)\le M_{Q_0}(d(\nabla u,SO(n)))+\sqrt{n},
\end{equation}
we derive 
\begin{align}\label{eq:set_est}
\{x\in Q_0\,:\,M_{Q_0}(\nabla u)(x)>2\sqrt{n}\}\subseteq\{x\in Q_0\,:\,M_{Q_0}(d(\nabla u, SO(n)))(x)>\sqrt{n}\},
\end{align}
which gives
\begin{align*}
&\|\nabla u-\nabla v\|_{L^{p(\cdot)}(Q_0)}\\
&=\|\nabla u-\nabla v\|_{L^{p(\cdot)}(\{x\in Q_0\,:\,u(x)\neq v(x)\})}\\
&\le \|\nabla u\|_{L^{p(\cdot)}(\{x\in Q_0\,:\,|\nabla u(x)|>2\sqrt{n}\})}+2\sqrt{n}(C+1)\|1\|_{L^{p(\cdot)}(\{x\in Q_0\,:\,u(x)\neq v(x)\})}\\
&\le \|\nabla u\|_{L^{p(\cdot)}(\{x\in Q_0\,:\,|\nabla u(x)|>2\sqrt{n}\})}+2(C+1)\|\sqrt{n}\|_{L^{p(\cdot)}(\{x\in Q_0\,:\,M_{Q_0}(d(\nabla u, SO(n)))(x)>\sqrt{n}\})}.
\end{align*}
Therefore, by Proposition~\ref{prop:maxfun2} and~\eqref{eq:est2} we deduce
\begin{equation}\label{eq:nablauv_est2}
\begin{aligned}
\|\nabla u-\nabla v\|_{L^{p(\cdot)}(Q_0)}
&\le 2\|d(\nabla u,SO(n))\|_{L^{p(\cdot)}(Q_0)}+2(C+1)\|M_{Q_0}(d(\nabla u,SO(n)))\|_{L^{p(\cdot)}(Q_0)}\\
&\le C\|d(\nabla u,SO(n))\|_{L^{p(\cdot)}(Q_0)}
\end{aligned}
\end{equation}
for a constant $C=C(r_0,n)>0$. By~\eqref{eq:MQpxrig} there exists a constant $C=C(r_0,n,p)>0$ and a rotation $R\in SO(n)$ such that 
\begin{equation*}
\|\nabla v-R\|_{L^{p(\cdot)}(Q_0')}\le C\|d(\nabla v,SO(n))\|_{L^{p(\cdot)}(Q_0)}.
\end{equation*}
Thus, we have
\begin{align*}
\|\nabla u-R\|_{L^{p(\cdot)}(Q_0')}&\le\|\nabla u-\nabla v\|_{L^{p(\cdot)}(Q_0')}+\|\nabla v-R\|_{L^{p(\cdot)}(Q_0')}\\
&\le \|\nabla u-\nabla v\|_{L^{p(\cdot)}(Q_0')}+C\|d(\nabla v,SO(n))\|_{L^{p(\cdot)}(Q_0)}\\
&\le C(\|\nabla u-\nabla v\|_{L^{p(\cdot)}(Q_0)}+C\|d(\nabla u,SO(n))\|_{L^{p(\cdot)}(Q_0)})\\
&\le C\|d(\nabla u,SO(n))\|_{L^{p(\cdot)}(Q_0)}
\end{align*}
for a constant $C=C(r_0,n,p)>0$. This gives~\eqref{eq:Q0pxrig} and concludes the proof. 
\end{proof}

 Next, we show that the rigidity estimates~\eqref{eq:Q0pxrig} holds in every cube $Q=a+(-r,r)^n$ with a rigidity constant $C$ independent of $a$ and $r$, provided that $r$ is uniformly bounded. This can be done by means of the $\log$-Hölder condition of $p$. 

\begin{proposition}\label{prop:Qpxrig}
Let $p\in\mathcal P^{\log}_b(\R^n)$ be such that $p^->1$. Let $r_0>0$ be fixed. There exists a constant $C=C(r_0,n,p)>0$ such that for all cubes $Q:=a+(-r,r)^n$ and $Q':=a+(-\frac{r}{2},\frac{r}{2})^n$ with $0<r\le r_0$, and for all functions $u\in W^{1,p(\cdot)}(Q;\R^n)$ we can find a constant rotation $R\in SO(n)$ satisfying
\begin{equation}\label{eq:Qpxrig}
\|\nabla u-R\|_{L^{p(\cdot)}(Q')}\le C\|d(\nabla u,SO(n))\|_{L^{p(\cdot)}(Q)}. 
\end{equation}
\end{proposition}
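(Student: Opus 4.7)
The plan is to reduce to Proposition~\ref{prop:Q0pxrig} on the fixed reference cube $Q_0 := (-r_0, r_0)^n$ via the affine rescaling $\psi_{a,\lambda}$ with $\lambda := r/r_0 \in (0,1]$, which maps $Q_0$ onto $Q$ and $Q_0' := (-r_0/2, r_0/2)^n$ onto $Q'$. Set $\tilde u(x) := \frac{1}{\lambda} u(\psi_{a,\lambda}(x))$ on $Q_0$, so that $\nabla \tilde u = \nabla u \circ \psi_{a,\lambda}$, and let $\tilde p := p \circ \psi_{a,\lambda}$. Applying Proposition~\ref{prop:Q0pxrig} to $\tilde u$ with exponent $\tilde p$ on $Q_0$ produces a rotation $R \in SO(n)$ with
\begin{equation*}
\|\nabla \tilde u - R\|_{L^{\tilde p(\cdot)}(Q_0')} \le C(r_0, n, \tilde p)\, \|d(\nabla \tilde u, SO(n))\|_{L^{\tilde p(\cdot)}(Q_0)},
\end{equation*}
and I will take this $R$ as the rotation for $u$ on $Q$.

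Two sources of $r$-dependence must be controlled uniformly. First, inspection of the proof of Proposition~\ref{prop:Q0pxrig} shows that the constant $C(r_0, n, \tilde p)$ depends on $\tilde p$ only through $\tilde p^-, \tilde p^+$ and $c_{\log}(\tilde p)$, since all tools invoked there have this dependence (Remark~\ref{rem:p}). The bounds $p^- \le \tilde p^- \le \tilde p^+ \le p^+$ are immediate, while the log-H\"older control follows from
\begin{equation*}
|\tilde p(x) - \tilde p(y)| = |p(a + \lambda x) - p(a + \lambda y)| \le \frac{c_1(p)}{\log(e + 1/(\lambda|x-y|))} \le \frac{c_1(p)}{\log(e + 1/|x-y|)}
\end{equation*}
for $x \ne y$ in $Q_0$, where the last inequality uses $\lambda \le 1$. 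Hence $c_1(\tilde p) \le c_1(p)$, and Remark~\ref{rem:H_logglob} on the fixed bounded set $Q_0$ bounds $c_{\log}(\tilde p)$ solely in terms of $c_1(p)$, $p^{\pm}$ and $r_0$.

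Second, the estimate must be transferred back to $Q$ by applying Proposition~\ref{prop:change} once to $f := \nabla u - R$ on $Q'$ and once to $h := d(\nabla u, SO(n))$ on $Q$. Since $\lambda \le 1$, the two scaling factors combine to give
\begin{equation*}
\|\nabla u - R\|_{L^{p(\cdot)}(Q')} \le C\, \lambda^{n(1/p_{Q'}^+ - 1/p_Q^-)}\, \|d(\nabla u, SO(n))\|_{L^{p(\cdot)}(Q)}.
\end{equation*}
The main obstacle is that the prefactor $\lambda^{n(1/p_{Q'}^+ - 1/p_Q^-)}$ threatens to blow up as $r \to 0^+$, since the exponent is nonpositive. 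This is precisely where the log-H\"older assumption on $p$ is decisive: Proposition~\ref{prop:smallQ} yields $(2r)^{-n(p_Q^+ - p_Q^-)} \le C_1$ uniformly in $a$ and $r$. Combining this with the elementary bounds $|1/p_{Q'}^+ - 1/p_Q^-| \le (p_Q^+ - p_Q^-)/(p^-)^2$, $1/(p^-)^2 \le 1$, and $r_0^{n(p_Q^+ - p_Q^-)} \le \max(1, r_0^{n(p^+ - p^-)})$ shows that the prefactor is bounded by a constant depending only on $r_0, n, p$, which yields~\eqref{eq:Qpxrig}.
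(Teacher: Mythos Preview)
Your proof is correct and follows essentially the same approach as the paper: rescale to the fixed cube $Q_0$ via $\psi_{a,r/r_0}$, apply Proposition~\ref{prop:Q0pxrig} to the rescaled function with exponent $\tilde p=p\circ\psi_{a,r/r_0}$, use Remark~\ref{rem:p} together with the contraction property of the log-H\"older modulus under $\lambda\le 1$ to make the constant independent of $r$, and then control the scaling prefactor coming from Proposition~\ref{prop:change} by Proposition~\ref{prop:smallQ}. The only cosmetic difference is that the paper writes the prefactor with $p_Q^+$ in place of your $p_{Q'}^+$; since $Q'\subset Q$ your bound is slightly sharper but both are handled identically by Proposition~\ref{prop:smallQ}.
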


\begin{proof}
We set $Q_0:=(-r_0,r_0)^n$ and $Q_0':=(-\frac{r_0}{2},\frac{r_0}{2})^n$. For all $0<r\le r_0$, we consider the functions 
$\psi_{a,\frac{r}{r_0}}(x)=a+\frac{r}{r_0}x\quad\text{for $x\in \R^n$}$ 
introduced in~\eqref{eq:psilambda}. Let $u\in W^{1,p(\cdot)}(Q;\R^n)$ and define $v:=\frac{r_0}{r}u\circ\psi_{a,\frac{r}{r_0}}$ and $q:=p\circ\psi_{a,\frac{r}{r_0}}\in\mathcal P_b(Q_0)$. We have that $v\in W^{1,q(\cdot)}(Q_0;\R^n)$ by Proposition~\ref{prop:change} and
$$\nabla v(x)=\nabla u(\psi_{a,\frac{r}{r_0}}(x))\quad\text{for a.e.\ $x\in Q_0$}.$$
Notice that $q$ satisfies
\begin{align}\label{eq:MMM1}
p^-\le q^-_{Q_0}\le q^+_{Q_0}\le p^+,
\end{align}
and for all $x_1,x_2, x \in Q_0$ we have by~\eqref{eq:logcon}--\eqref{eq:logcon2} and being $0<r\le r_0$ 
\begin{align}
&|q(x_1)-q(x_2)|\le \frac{c_{\log}(p)}{\log(\e+r_0/(r|x_1-x_2|))}\le \frac{c_{\log}(p)}{\log(\e+1/|x_1-x_2|))},\label{eq:MMM2}\\
& |q(x)| \le \frac{p^+\log(\e+ \sqrt{n} r_0 )}{\log(\e +|x|)}. \notag
\end{align}
Hence, $q\in\mathcal P_b^{\log}(Q_0)$ and the $\log$-Hölder constant of $q$ on $Q_0$ is uniformly bounded by a constant depending only on $n$, $r_0$, $p^+$, and $c_{\log}(p)$. By Proposition~\ref{prop:Q0pxrig} there exists a constant $C_1= C_1(r_0,n,q) >0$ such that we can find a constant rotation $R\in SO(n)$ (depending on $v$ and thus on $u$, $a$, and $r$) satisfying
$$\|\nabla v-R\|_{L^{q(\cdot)}(Q_0')}\le C_1\|d(\nabla v,SO(n))\|_{L^{q(\cdot)}(Q_0)}.$$
By Remark~\ref{rem:p} we note that $C_1(r_0,n,q) = C_1(r_0,n,q^-,q^+,c_{\log}(q))$ and then, in view of~\eqref{eq:MMM1}--\eqref{eq:MMM2}, we get $C_1(r_0,n,q) = C_1(r_0,n,p)$. Hence,
$$\|\nabla u\circ\psi_{a,\frac{r}{r_0}}-R\|_{L^{q(\cdot)}(Q_0')}\le C_1\|d(\nabla u\circ\psi_{a,\frac{r}{r_0}},SO(n))\|_{L^{q(\cdot)}(Q_0)},$$
and by Proposition~\ref{prop:change} for $f=\nabla u-R$ and $f=d(\nabla u,SO(n))$ we obtain 
\begin{align*}
\|\nabla u-R\|_{L^{p(\cdot)}(Q')}&\le C_1 \left(\frac{r}{r_0}\right)^{\frac{n}{p_Q^+}}\left(\frac{r}{r_0}\right)^{-\frac{n}{p_Q^-}} \|d(\nabla u,SO(n))\|_{L^{p(\cdot)}(Q)}. 
\end{align*}
In view of $p\in\mathcal P^{\log}_b(\R^n)$ and Proposition~\ref{prop:smallQ}, the right-hand side is controlled by 
\begin{align*}
r_0^{\frac{n(p_{Q}^+-p_{Q}^-)}{p^-_{Q}p^+_{Q}}}r^{\frac{n(p_{Q}^--p_{Q}^+)}{p^-_{Q}p^+_{Q}}}C_1 \|d(\nabla u,SO(n))\|_{L^{p(\cdot)}(Q)}\le (1+r_0^n)C_1C_2\|d(\nabla u,SO(n))\|_{L^{p(\cdot)}(Q)}
\end{align*}
for a constant $C_2=C_2(n,p)>0$. Therefore, inequality~\eqref{eq:Qpxrig} is satisfied for a constant $C=C(r_0,n,p)>0$. 
\end{proof}

We point out that the assumptions of Proposition~\ref{prop:Qpxrig} are easily satisfied if all cubes are contained in a bounded set. This allows us to extend the rigidity result of Proposition~\ref{prop:Q0pxrig} to the case of bounded domains with Lipschitz boundary. 

 In order to prove Theorem~\ref{thm:pxrig} we also need the following weighted Poincaré inequality, here specified for a variable exponent, whose proof is postponed to the Appendix. 

\begin{proposition}[Weighted Poincaré inequality]\label{prop:poincare}
Let $\Omega\subset\R^n$ be a bounded domain with Lipschitz boundary. Let $p\in\mathcal P^{\log}_b(\R^n)$ with $p^->1$. There exists a constant $C=C(\Omega,n,d,p)>0$ such that for every locally Lipschitz function $f\in L^{p(\cdot)}(\Omega;\R^d)$ we can find a constant vector $a\in \R^d$ satisfying
\begin{equation}\label{eq:poincare}
\|f-a\|_{L^{p(\cdot)}(\Omega)}\le C\|d(\cdot,\partial\Omega)\nabla f\|_{L^{p(\cdot)}(\Omega)}. 
\end{equation}
\end{proposition}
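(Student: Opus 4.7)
The plan is to prove Proposition~\ref{prop:poincare} via the classical Whitney-decomposition proof of the weighted Poincar\'e inequality, adapted to the variable exponent setting by means of the localization tool Proposition~\ref{prop:locglob2} and a careful tracking of scalings through the log-H\"older continuity of $p$.

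\textbf{Whitney decomposition and chain structure.} First I extend $p$ to $\R^n$ via Proposition~\ref{prop:417} and construct a Whitney decomposition $\{Q_j\}_{j\in\N}$ of $\Omega$: closed cubes with pairwise disjoint interiors and sidelengths $\ell_j$ satisfying $\ell_j \le \dist(Q_j, \partial\Omega) \le 4\sqrt{n}\ell_j$, whose enlargements $\hat Q_j := \tfrac{9}{8}Q_j \subset \Omega$ form a locally $N$-finite family with $N = N(n)$. Since $\Omega$ is a bounded Lipschitz (hence John) domain, I also fix a central cube $Q_0$ and, for each $j$, a chain of adjacent Whitney cubes $Q_{j,0} = Q_j, Q_{j,1}, \ldots, Q_{j,m_j} = Q_0$ with comparable sidelengths along the chain and bounded multiplicity, in the sense that each Whitney cube appears in at most $C = C(\Omega)$ such chains passing within distance comparable to its side.

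\textbf{Local Poincar\'e and assembly.} On each $Q_j$ I establish the scale-invariant bound
\begin{equation*}
\|f - \langle f \rangle_{Q_j}\|_{L^{p(\cdot)}(Q_j)} \le C\, \ell_j\, \|\nabla f\|_{L^{p(\cdot)}(Q_j)}, \qquad C = C(n,p),
\end{equation*}
independent of $j$. This is obtained by the rescaling argument used in Proposition~\ref{prop:Qpxrig}: pull $Q_j$ back to the unit cube via $\psi_{a_j,\ell_j}$ of~\eqref{eq:psilambda}, apply the standard variable-exponent Poincar\'e inequality on the unit cube (cf.~\cite[Chapter~8]{DHHR}) for the pulled-back exponent $q = p \circ \psi_{a_j,\ell_j}$ (whose log-H\"older constant is uniformly bounded, so that by Remark~\ref{rem:p} the Poincar\'e constant is independent of $j$), and scale back with Proposition~\ref{prop:change}; the accompanying factor $\ell_j^{n/p^+_{Q_j} - n/p^-_{Q_j}}$ stays bounded thanks to Proposition~\ref{prop:smallQ}. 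Since $\dist(x,\partial\Omega) \ge \ell_j$ on $Q_j$, the right-hand side is dominated by $C\|\dist(\cdot, \partial\Omega)\nabla f\|_{L^{p(\cdot)}(Q_j)}$. Setting $a := \langle f \rangle_{Q_0}$ and splitting
\begin{equation*}
\|f - a\|_{L^{p(\cdot)}(\Omega)} \le \Big\|\sum_j \chi_{Q_j}(f - \langle f \rangle_{Q_j})\Big\|_{L^{p(\cdot)}(\R^n)} + \Big\|\sum_j \chi_{Q_j}(\langle f \rangle_{Q_j} - a)\Big\|_{L^{p(\cdot)}(\R^n)},
\end{equation*}
I bound the first summand by $C\|\dist(\cdot, \partial\Omega)\nabla f\|_{L^{p(\cdot)}(\Omega)}$ through Proposition~\ref{prop:locglob2} applied to $f_{Q_j} := \chi_{Q_j}(f - \langle f \rangle_{Q_j})$.

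\textbf{Chain estimate; main obstacle.} The core difficulty lies in controlling the second summand. Using the chain $Q_{j,0},\ldots,Q_{j,m_j}$ and the classical integral Poincar\'e inequality on unions of adjacent Whitney cubes, I first derive
\begin{equation*}
|\langle f \rangle_{Q_j} - a| \le C \sum_{k=0}^{m_j - 1} \frac{1}{|Q_{j,k}^*|}\int_{Q_{j,k}^*} \dist(y, \partial\Omega)|\nabla f(y)|\, \de y,
\end{equation*}
where $Q_{j,k}^*$ is a mild enlargement of $Q_{j,k} \cup Q_{j,k+1}$, using $\dist(\cdot,\partial\Omega) \sim \ell_{j,k}$ on $Q_{j,k}^*$. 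The hardest step is to promote this chain sum to the pointwise bound
\begin{equation*}
\chi_{Q_j}(x)\, |\langle f \rangle_{Q_j} - a| \le C\, M_\Omega\big(\dist(\cdot, \partial\Omega) |\nabla f|\big)(x) \quad \text{for a.e.\ } x \in Q_j,
\end{equation*}
which follows from the classical Iwaniec--Nolder / Boman argument for John domains: along the chain in a John domain one may arrange the cubes so that their sidelengths form a geometric sequence anchored at $x$, and the resulting sum of local averages is absorbed into a single maximal-function value at $x$. Once this pointwise estimate is in hand, Proposition~\ref{prop:maxfun2} (whose proof itself exploits the log-H\"older assumption on $p$) combined with the finite overlap of the Whitney family yields the required $L^{p(\cdot)}$-bound on the second summand, and the triangle inequality then closes the proof.
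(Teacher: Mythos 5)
Your proposal diverges substantially from the paper's proof: the paper (following Drelichman--Dur\'an, \cite{DD}) uses a duality argument, writing $\|f-a\|_{L^{p(\cdot)}}$ via pairing against $g\in C_c^\infty(\Omega)$, applying the representation $\int_\Omega|f-a||g|\le C\int_\Omega\big(\int_{|x-y|\le Cd(x,\partial\Omega)}|g(y)||x-y|^{1-n}\,\de y\big)|\nabla f(x)|\,\de x$, and then bounding the truncated Riesz potential by $Cd(x,\partial\Omega)M(g)(x)$; H\"older and boundedness of the maximal operator on $L^{p'(\cdot)}$ close the argument. The maximal function acts on the \emph{dual} variable $g$, not on $d(\cdot,\partial\Omega)|\nabla f|$, which is precisely what makes the estimate work.

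The chain step in your proposal has a genuine gap: the claimed pointwise bound
\begin{equation*}
\chi_{Q_j}(x)\,|\langle f\rangle_{Q_j}-a|\le C\,M_\Omega\big(d(\cdot,\partial\Omega)|\nabla f|\big)(x)
\end{equation*}
is \emph{false}. Take $\Omega$ the unit ball and $f(y)=\log\frac{1}{1-|y|}$, which is locally Lipschitz and lies in $L^{p(\cdot)}(\Omega)$. Then $d(y,\partial\Omega)|\nabla f(y)|\equiv1$, so $M_\Omega\big(d(\cdot,\partial\Omega)|\nabla f|\big)\equiv1$, while $\langle f\rangle_{Q_j}\sim\log(1/\ell_j)\to\infty$ as $Q_j$ approaches $\partial\Omega$. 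The chain over cubes with geometrically growing sidelengths $\ell_k\sim2^k\ell_j$ yields a sum of $m_j\sim\log(1/\ell_j)$ averages, each individually bounded by the maximal function at $x$, but the sum does not collapse to a single such value; the right object is a (truncated) Riesz potential, not the Hardy--Littlewood maximal function. The Boman/Iwaniec--Nolder argument does give a pointwise bound by a Riesz potential, and the correct route from there back to the maximal function involves precisely the duality step the paper uses (or, alternatively, the delicate summation with H\"older along the chain \`a la Hurri-Syrj\"anen, which looks considerably harder to push to the variable-exponent setting because the exponents in the summation change from cube to cube). The Whitney assembly and the use of Proposition~\ref{prop:locglob2} are otherwise reasonable, but the core of the argument must be reorganized around a correct bound for the chain term.
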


We can finally prove Theorem~\ref{thm:pxrig}. The proof, which relies on a Whitney covering of $\Omega$, follows the strategy adopted by~\cite{FJM}, in the version proposed by~\cite{conti.garroni}. 

\begin{proof}[Proof of Theorem~\ref{thm:pxrig}]
 We consider a Whitney
covering of $\Omega$, i.e., a countable family of cubes $\{Q_i\}_i$, where $Q_i:=a_i+(-\frac{r_i}{2},\frac{r_i}{2})^n\subset\Omega$ with $a_i\in\R^n$ and $r_i>0$, which satisfies: there exists a constant $C=C(n)>0$ such that 
\begin{equation}\label{eq:whitney}
\sqrt{n} r_i\le d(Q_i,\partial \Omega)\le Cr_i\quad\text{for all $i\in\N$}, 
\end{equation}
and there exists $N=N(\Omega,n)>0$ such that
\begin{equation}\label{eq:locfin}
\chi_\Omega\le\sum_{i=1}^\infty\chi_{Q_i}\le\sum_{i=1}^\infty\chi_{2Q_i}\le N\chi_\Omega\quad\text{in $\R^n$}, 
\end{equation}
where we set $2Q_i:=a_i+(-r_i,r_i)^n\subset\Omega$. Notice that the family $\{Q_i\}_i$ satisfies the assumptions of Proposition~\ref{prop:Qpxrig}. Hence, there exists a constant $C=C(\Omega,n,p)>0$, independent of $i\in\N$, and constant rotations $R_i\in SO(n)$ such that 
\begin{equation}\label{eq:Qi-rig}
\|\nabla u-R_i\|_{L^{p(\cdot)}(Q_i)}\le C\|d(\nabla u,SO(n))\|_{L^{p(\cdot)}(2Q_i)}.
\end{equation}
Moreover, by the $\log$-Hölder continuity of $p$, using Remark~\ref{rem:volume} and Proposition~\ref{prop:smallQ}, we observe that there exists a constant $C(n,p)>0$ such that 
\begin{equation}\label{eq:2Qi-Qi}
\|\chi_{2Q_i}\|_{L^{p(\cdot)}(\R^n)}\le C\|\chi_{Q_i}\|_{L^{p(\cdot)}(\R^n)}.
\end{equation}

Let $\{\phi_i\}_i$ be a partition of unity subordinated to the the covering $\{Q_i\}_i$, that is for all $i\in\N$
\begin{equation}\label{eq:phii}
\phi_i\in C_c^\infty(Q_i),\quad 0\le \phi_i\le 1\text{ in }Q_i,\quad \|\nabla\phi_i\|_{L^{\infty}(Q_i)}\le \frac{C}{r_i},\quad \sum_{i=1}^\infty\phi_i=1\text{ on }\Omega,
\end{equation}
with constant $C=C(n)>0$ independent of $i\in\N$. We define the function $\hat R\colon\Omega\to \R^{n\times n}$ as 
$$\hat R(x):=\sum_{i=1}^\infty R_i\phi_i(x)\quad\text{for all $x\in\Omega$}.$$
Since $\hat R\in C^\infty(\Omega;\R^{n\times n})\cap L^\infty(\Omega;\R^{n\times n})$, by Proposition~\ref{prop:poincare} there exists $A\in \R^{n\times n}$ such that 
\begin{equation}\label{eq:hatR-A}
\|\hat R-A\|_{L^{p(\cdot)}(\Omega)}\le C\|d(\cdot,\partial\Omega)\nabla \hat R\|_{L^{p(\cdot)}(\Omega)}.
\end{equation}
Moreover, by Propositions~\ref{prop:locglob} and~\ref{prop:locglob2} we have
\begin{equation}\label{eq:estimatebr}
\begin{aligned}
\|\nabla u-\hat R\|_{L^{p(\cdot)}(\Omega)}\underset{\eqref{eq:phii}}{\le}& \left\|\sum_{i=1}^\infty \chi_{Q_i}|\nabla u-R_i|\right\|_{L^{p(\cdot)}(\R^n)}\\
\underset{\eqref{eq:locglob2}}{\le}&C\left\|\sum_i\chi_{Q_i}\frac{\|\nabla u-R_i\|_{L^{p(\cdot)}(Q_i)}}{\|\chi_{Q_i}\|_{L^{p(\cdot)}(\R^n)}}\right\|_{L^{p(\cdot)}(\R^n)}\\
\underset{\eqref{eq:Qi-rig}}{\le} &C\left\|\sum_i\chi_{Q_i}\frac{\|d(\nabla u,SO(n))\|_{L^{p(\cdot)}(2Q_i)}}{\|\chi_{Q_i}\|_{L^{p(\cdot)}(\R^n)}}\right\|_{L^{p(\cdot)}(\R^n)}\\
\underset{\eqref{eq:2Qi-Qi}}{\le} &C\left\|\sum_i\chi_{2Q_i}\frac{\|d(\nabla u,SO(n))\|_{L^{p(\cdot)}(2Q_i)}}{\|\chi_{2Q_i}\|_{L^{p(\cdot)}(\R^n)}}\right\|_{L^{p(\cdot)}(\R^n)}\\
\underset{\eqref{eq:locglob}}{\le} &C\left\|\sum_i\chi_{2Q_i}d(\nabla u,SO(n))\right\|_{L^{p(\cdot)}(\R^n)}\\
\underset{\eqref{eq:locfin}}{\le} &\|d(\nabla u,SO(n))\|_{L^{p(\cdot)}(\Omega)}
\end{aligned}
\end{equation}
for a constant $C=C(\Omega,n,p)>0$. Moreover, we have
\begin{equation}\label{eq:nablaR}
\nabla \hat R(x)=\sum_{i=1}^\infty R_i\nabla\phi_i(x)=\sum_{i=1}^\infty (R_i-\nabla u(x))\nabla\phi_i(x)\quad\text{for a.e.\ $x\in\Omega$},
\end{equation}
since $\sum_{i=1}^\infty \nabla\phi_i=0$ in $\Omega$, which gives
\begin{equation}\label{eq:dist-hatR}
\begin{aligned}
\|d(\cdot,\partial\Omega)\nabla \hat R\|_{L^{p(\cdot)}(\Omega)}\underset{\eqref{eq:nablaR}}{\le}& \left\|\sum_{i=1}^\infty \chi_{Q_i}d(\cdot,\partial\Omega)|\nabla \phi_i||\nabla u-R_i|\right\|_{L^{p(\cdot)}(\R^n)}\\
\underset{\eqref{eq:whitney},\,\eqref{eq:phii}}{\le}& C \left\|\sum_{i=1}^\infty \chi_{Q_i}|\nabla u-R_i|\right\|_{L^{p(\cdot)}(\R^n)}\\
\underset{\eqref{eq:estimatebr}}{\le} &C\|d(\nabla u,SO(n))\|_{L^{p(\cdot)}(\Omega)}
\end{aligned}
\end{equation}
for a constant $C=C(\Omega,n,p)>0$. Let $R\in SO(n)$ be such that $|R-A|=d(A,SO(n))$. Then, by~\eqref{eq:hatR-A},~\eqref{eq:estimatebr}, and~\eqref{eq:dist-hatR} we obtain
\begin{align*}
\|\nabla u-R\|_{L^{p(\cdot)}(\Omega)}&\le \|\nabla u-\hat R\|_{L^{p(\cdot)}(\Omega)}+\|\hat R-A\|_{L^{p(\cdot)}(\Omega)}+\|d(A,SO(n))\|_{L^{p(\cdot)}(\Omega)} \\
&\le 2\|\nabla u-\hat R\|_{L^{p(\cdot)}(\Omega)}+2\|\hat R-A\|_{L^{p(\cdot)}(\Omega)}+\|d(\nabla u,SO(n))\|_{L^{p(\cdot)}(\Omega)}\\
&\le C\|d(\nabla u,SO(n))\|_{L^{p(\cdot)}(\Omega)},
\end{align*}
 which gives~\eqref{eq:pxrig}.

Finally, if $R\in SO(n)$ satisfies~\eqref{eq:pxrig} and $\hat R\in SO(n)$ satisfies~\eqref{eq:R-mean}, then
\begin{align*}
\|\nabla u-\hat R\|_{L^{p(\cdot)}(\Omega)}&\le\|\nabla u-R\|_{L^{p(\cdot)}(\Omega)}+\|R-\langle\nabla u\rangle_\Omega\|_{L^{p(\cdot)}(\Omega)}+\|d(\langle\nabla u\rangle_\Omega,SO(n))\|_{L^{p(\cdot)}(\Omega)}\\
&\le 2\|\nabla u-R\|_{L^{p(\cdot)}(\Omega)}+2\|R-\langle\nabla u\rangle_\Omega\|_{L^{p(\cdot)}(\Omega)}+\|d(\nabla u,SO(n))\|_{L^{p(\cdot)}(\Omega)}\\
&\le C\|\nabla u-R\|_{L^{p(\cdot)}(\Omega)}+\|d(\nabla u,SO(n))\|_{L^{p(\cdot)}(\Omega)}\le C\|d(\nabla u,SO(n))\|_{L^{p(\cdot)}(\Omega)}
\end{align*}
for a constant $C=C(\Omega,n,p)>0$. Therefore, in~\eqref{eq:pxrig} we can take $R$ satisfying~\eqref{eq:R-mean}. In particular, the constant rotation $R\in SO(n)$ depends only on $u$ and $\Omega$.
\end{proof}

\section{Geometric rigidity with mixed growth conditions and variable exponents}\label{sec:3}

This section is devoted to the generalization of the geometric rigidity with mixed growth conditions of~\cite[Theorem 1.1]{CDM} to the case of variable exponents. As a preparation, we need to generalize Korn's inequality with mixed growth conditions (see~\cite[Theorem~2.1]{CDM}) to the case of variable exponents, see Theorem~\ref{thm:pxqxkorn} below. The main difference to the proof presented in~\cite{CDM} is that we need to use a suitable extension result in the variable exponents setting.

 Let us start by formulating this extension result. For the proof, we refer to the Appendix. It extends to the variable exponent setting a technique devised in~\cite[Theorem 5.1]{CDM}, in its turn inspired by~\cite[Lemma 4]{nitsche}. 
\begin{theorem}[Extension result]\label{thm:extension}
Let $\phi\in\Lip(\R^{n-1};\R)$ be a Lipschitz function with $\phi(0)=0$ and Lipschitz constant $L>0$. Let us define
$$\Omega:=\{x=(x',x_n)\in\R^{n-1}\times \R\,:\, x_n<\phi(x')\}.$$
Let $R>0$ and let $p,q\in\mathcal P^{\log}_b(B_R(0)\cap\Omega)$ be such that
$$p(x)\le q(x)\quad\text{for a.e.\ $x\in B_R(0)\cap\Omega$}.$$
Assume that $u\in W^{1,1}(B_R(0)\cap\Omega;\R^n)$, $f\in L^{p(\cdot)}(B_R(0)\cap\Omega;\R^{n\times n})$, and $g\in L^{q(\cdot)}(B_R(0)\cap\Omega;\R^{n\times n})$ satisfy
$$ eu(x)=f(x)+g(x)\quad\text{for a.e.\ $x\in B_R(0)\cap\Omega$}.$$
There exist a radius $r=r(R,L,n)$, with $0<r<R$, a constant $C=C(R,L,n,p,q)>0$ and functions $\tilde p,\tilde q\in \mathcal P^{\log}_b(B_r(0))$, $\tilde u\in W^{1,1}(B_r(0);\R^n)$, $\tilde f\in L^{\tilde p(\cdot)}(B_r(0);\R^{n\times n})$, and $\tilde g\in L^{\tilde q(\cdot)}(B_r(0);\R^{n\times n})$ satisfying
\begin{align*}
&\tilde p(x)=p(x),\quad\tilde q(x)=q(x)& &\text{for a.e $x\in B_r(0)\cap \Omega$},\\
&p^-\le \tilde p^-\le \tilde p^+\le p^+,\quad q^-\le \tilde q^-\le \tilde q^+\le q^+,\quad\tilde p(x)\le \tilde q(x)& &\text{for a.e.\ $x\in B_r(0)$},\\
&\tilde u(x)=u(x),\quad \tilde f(x)=f(x),\quad\tilde g(x)=g(x)& &\text{for a.e $x\in B_r(0)\cap \Omega$},\\
&e\tilde u(x)=\tilde f(x)+\tilde g(x)& &\text{for a.e.\ $x\in B_r(0)$},
\end{align*}
and such that
\begin{equation}\label{eq:fgest}
\|\tilde f\|_{L^{\tilde p(\cdot)}(B_r(0))}\le C\|f\|_{L^{p(\cdot)}(B_R(0)\cap\Omega)},\quad \|\tilde g\|_{L^{\tilde q(\cdot)}(B_r(0))}\le C\|g\|_{L^{ q(\cdot)}(B_R(0)\cap\Omega)}. 
\end{equation}
\end{theorem}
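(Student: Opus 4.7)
The plan is to adapt the Nitsche reflection technique from~\cite[Theorem~5.1]{CDM} (itself inspired by~\cite[Lemma~4]{nitsche}) to the variable-exponent setting. The first step is to flatten the boundary via the bi-Lipschitz change of coordinates $\Psi(x',x_n)=(x',x_n-\phi(x'))$, whose Lipschitz constants depend only on $L$. Under $\Psi$ the domain $\Omega$ becomes the half-space $\{y_n<0\}$ and $B_R(0)\cap\Omega$ contains a half-ball $B_{r_1}^-:=B_{r_1}(0)\cap\{y_n<0\}$ with $r_1=r_1(R,L,n)$. Pulling back $u,f,g$ and the exponents produces new data on this half-ball still satisfying an identity $e\hat u=\hat f+\hat g$: the Jacobian of $\Psi$ being bounded and bounded away from zero, the additional terms it generates can be absorbed into $\hat f$ without affecting the Lebesgue space structure. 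Log-H\"older continuity, the ordering $\hat p\le\hat q$, and the bounds on $\hat p^\pm,\hat q^\pm$ all transfer with constants depending only on $L,n,p,q$.

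Next I would apply Nitsche's reflection on the half-ball. For $x_n>0$ I set
\begin{equation*}
\tilde u_i(x',x_n)=a_1^{(i)}\hat u_i(x',-x_n)+a_2^{(i)}\hat u_i(x',-x_n/2),
\end{equation*}
with prescribed coefficients $a_j^{(i)}$ (different for tangential components $i<n$ and the normal component $i=n$), chosen so that $\tilde u$ matches $\hat u$ at $x_n=0$ and so that $e\tilde u(x',x_n)$ is an explicit linear combination of entries of $e\hat u$ at the two reflected points, with appropriate sign flips on the off-diagonal $\partial_n$-entries. Applying the same linear combination and reflections to $\hat f$ and $\hat g$ defines $\tilde f,\tilde g$ on $\{x_n>0\}$; by construction $e\tilde u=\tilde f+\tilde g$ on a full ball $B_r(0)$ with $r<r_1$ depending only on $R,L,n$.

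For the exponents I would extend by a single reflection, setting $\tilde p(x',x_n)=\hat p(x',-|x_n|)$ and $\tilde q(x',x_n)=\hat q(x',-|x_n|)$. The map $(x',x_n)\mapsto(x',-|x_n|)$ being Lipschitz, $\tilde p,\tilde q$ lie in $\mathcal P^{\log}_b(B_r(0))$ with the desired bounds on $\tilde p^\pm,\tilde q^\pm$, and the ordering $\tilde p\le\tilde q$ is immediate from $\hat p\le\hat q$. Proving~\eqref{eq:fgest} then reduces, via Proposition~\ref{prop:modular} and the explicit formula for $\tilde f$, to bounding the modular contributions
\begin{equation*}
\int_{B_r(0)\cap\{x_n>0\}}|\hat f(x',-\alpha x_n)|^{\tilde p(x',x_n)}\,\de x
\end{equation*}
for the two values $\alpha\in\{1,1/2\}$ appearing in the Nitsche formula, and analogously for $\tilde g$.

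The main obstacle lies precisely in these last modular comparisons when $\alpha\ne 1$. After the change of variable $y_n=-\alpha x_n$, the exponent $\tilde p(x',x_n)=\hat p(x',y_n/\alpha)$ does not coincide with the natural exponent $\hat p(x',y_n)$ of the source integral; by log-H\"older continuity of $\hat p$ and boundedness of the domain the mismatch is pointwise small, and Proposition~\ref{prop:smallQ} provides the quantitative control that allows one to bound $|\hat f(x',y_n)|^{\hat p(x',y_n/\alpha)}$ by a constant multiple of $|\hat f(x',y_n)|^{\hat p(x',y_n)}+1$ on the relevant small scales. Summing the contributions and unwinding the flattening map $\Psi$ yields $\|\tilde f\|_{L^{\tilde p(\cdot)}(B_r(0))}\le C\|f\|_{L^{p(\cdot)}(B_R(0)\cap\Omega)}$ with $C=C(R,L,n,p,q)$; the analogous argument for $\tilde g$ completes~\eqref{eq:fgest}.
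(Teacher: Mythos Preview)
Your flattening step contains a genuine gap. When you pull back the displacement by the bi-Lipschitz map $\Psi(x',x_n)=(x',x_n-\phi(x'))$, the symmetric gradient does \emph{not} transform into an expression involving only $eu$: a direct computation (already for a linear graph $\phi(x')=Lx_1$) shows that $(e\hat u)(y)$ contains the skew part of $\nabla u$ at $\Psi^{-1}(y)$ multiplied by first derivatives of $\phi$. Since the hypothesis gives only $u\in W^{1,1}$ and a decomposition of $eu$, these extra terms are merely in $L^1$ and cannot be ``absorbed into $\hat f$'' while keeping $\hat f\in L^{\hat p(\cdot)}$. This is precisely why the paper (following~\cite{CDM} and~\cite{nitsche}) does \emph{not} flatten: it works directly in the original coordinates using Stein's regularized distance $\delta$ and the Nitsche extension
\[
\tilde u(x)=\int_1^2\psi(\lambda)\big[u(x',x_n-\lambda\delta(x))-\lambda\,\nabla\delta(x)\,u_n(x',x_n-\lambda\delta(x))\big]\,\de\lambda,
\]
whose correction term $-\lambda\nabla\delta\,u_n$ is engineered so that $e\tilde u$ depends only on entries of $eu$ at the reflected points (plus a controlled $\nabla^2\delta$ contribution bounded via~\eqref{eq:der_delta}). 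The moment conditions on $\psi$ kill the remaining occurrences of $u_n$ and its derivatives.

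A second, smaller issue concerns your extension of the exponent by a single reflection $\tilde p(x',x_n)=\hat p(x',-|x_n|)$. Because the Nitsche formula samples at several depths, the modular of $\tilde f$ involves $|\hat f(x',y_n)|^{\hat p(x',y_n/\alpha)}$ with $\alpha\neq 1$, and your appeal to Proposition~\ref{prop:smallQ} does not directly yield the pointwise bound you state (for large values of $|\hat f|$ the inequality $t^{p_1}\le C(t^{p_0}+1)$ fails when $p_1>p_0$). The paper sidesteps this by defining $\tilde p(x):=\min_{\lambda\in[1,2]}p(x',x_n-\lambda\delta(x))$; then $\tilde p(x)$ is automatically below the natural exponent at every reflected point, so the elementary inequality $t^{\tilde p}\le 1+t^{p(\text{reflected})}$ applies and the modular estimate~\eqref{eq:estimate2} follows by Tonelli. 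The fact that a pointwise minimum of a one-parameter family of uniformly log-H\"older functions is again log-H\"older (argued via a countable dense set) is what makes this choice admissible.
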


The proof of Theorem~\ref{thm:extension} is postponed to the Appendix. We can now state the Korn inequality with mixed growth conditions and variable exponents.

\begin{theorem}[Korn's inequality for mixed growth and variable exponents]\label{thm:pxqxkorn}
Let $\Omega\subset\R^n$ be a bounded domain with Lipschitz boundary. Let $p,q\in
\mathcal P^{\log}_b(\Omega)$ be such that
$$ p^->1, \quad p(x)\le q(x)\quad\text{for a.e.\ $x\in\Omega$}.$$
Assume that $u\in W^{1,1}(\Omega;\R^n)$, $f\in L^{p(\cdot)}(\Omega;\R^{n\times n})$, and $g\in L^{q(\cdot)}(\Omega;\R^{n\times n})$ satisfy
$$ eu(x)=f(x)+g(x)\quad\text{for a.e.\ $x\in\Omega$}.$$
There exist a constant $C=C(\Omega,n,p,q)>0$, a skew-symmetric matrix $S\in \R^{n\times n}_{\skw}$, and two functions $F\in L^{p(\cdot)}(\Omega;\R^{n\times n})$ and $G\in L^{q(\cdot)}(\Omega;\R^{n\times n})$ satisfying
\begin{equation}\label{eq:Spqkorn}
\nabla u(x)-S=F(x)+G(x)\quad\text{for a.e.\ $x \in\Omega$},
\end{equation}
and
\begin{equation}\label{eq:Spq_est}
\|F\|_{L^{p(\cdot)}(\Omega)}\le C \|f\|_{L^{p(\cdot)}(\Omega)}\quad\text{and}\quad \|G\|_{L^{q(\cdot)}(\Omega)}\le C \|g\|_{L^{q(\cdot)}(\Omega)}.
\end{equation}
In particular, we can take $S=(\langle\nabla u\rangle_\Omega)_{\skw}$.
\end{theorem}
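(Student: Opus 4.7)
I plan to follow the same general blueprint as the proof of Theorem~\ref{thm:pxrig}, now adapting the strategy of~\cite{CDM} for mixed growth to the variable-exponent setting. The proof proceeds in two stages: first a local version of the mixed decomposition on cubes of uniformly bounded size, and then a Whitney patching argument to produce the global statement. The crucial new technical ingredient compared to Theorem~\ref{thm:pxrig} is the extension Theorem~\ref{thm:extension}, which is precisely designed to make this kind of reduction possible in the variable-exponent framework.

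For the local step, given a ball $B$ (either contained in the reference domain or touching the boundary), I first apply Theorem~\ref{thm:extension} to replace $B\cap \Omega$ with a full ball $B_r(0)$ carrying extended data $\tilde u, \tilde f, \tilde g$ such that $e\tilde u = \tilde f + \tilde g$, together with extended exponents $\tilde p, \tilde q$ that remain log-H\"older and satisfy~\eqref{eq:fgest}. On the full ball the decomposition is obtained by realizing the map $e\tilde u \mapsto \nabla \tilde u - (\langle \nabla \tilde u\rangle_{B_r(0)})_{\skw}$ as a single bounded linear operator $\mathcal L$, acting on arbitrary symmetric matrix fields and simultaneously continuous in $L^{\tilde p(\cdot)}$ and in $L^{\tilde q(\cdot)}$ with constants depending only on $R, n, \tilde p^{\pm}, \tilde q^{\pm}, c_{\log}$. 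Such an $\mathcal L$ can be built via the Saint-Venant identity $\partial^2_{ij}v_k = \partial_i(ev)_{jk} + \partial_j(ev)_{ik} - \partial_k(ev)_{ij}$ together with the elliptic estimate of Proposition~\ref{prop:pxpoisson}; its continuity in $L^{r(\cdot)}$ for log-H\"older $r$ then follows from the Calder\'on-Zygmund structure of $\mathcal L$, combined with Proposition~\ref{prop:maxfun2}. Setting $F := \mathcal L(\tilde f)|_{B_R\cap \Omega}$ and $G := \mathcal L(\tilde g)|_{B_R\cap \Omega}$ and invoking the linearity $\mathcal L(e\tilde u) = \mathcal L(\tilde f) + \mathcal L(\tilde g)$ yields the local decomposition with sharp $L^{p(\cdot)}$ and $L^{q(\cdot)}$ estimates. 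Uniformity of the constants with respect to the side length of the ball is then obtained exactly as in Proposition~\ref{prop:Qpxrig}, by rescaling via $\psi_{a, r/r_0}$ and Proposition~\ref{prop:change}, controlling the rescaled exponents through Proposition~\ref{prop:smallQ} and Remark~\ref{rem:p}.

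For the global statement I fix a Whitney covering $\{Q_i\}$ of $\Omega$ and a subordinate partition of unity $\{\phi_i\}$ as in~\eqref{eq:whitney}--\eqref{eq:phii}, so that on each $Q_i$ the local step produces a skew-symmetric matrix $S_i$ and functions $F_i, G_i$ with $\nabla u - S_i = F_i + G_i$ and $\|F_i\|_{L^{p(\cdot)}(Q_i)} \le C\|f\|_{L^{p(\cdot)}(2Q_i)}$, $\|G_i\|_{L^{q(\cdot)}(Q_i)} \le C\|g\|_{L^{q(\cdot)}(2Q_i)}$. Setting $\hat S := \sum_i S_i \phi_i$ and using the weighted Poincar\'e inequality (Proposition~\ref{prop:poincare}) to find a constant matrix $S \in \R^{n\times n}$ with $\|\hat S - S\|_{L^{p(\cdot)}(\Omega)} \le C\|d(\cdot, \partial \Omega)\nabla \hat S\|_{L^{p(\cdot)}(\Omega)}$, I then mimic the chain of estimates~\eqref{eq:estimatebr}--\eqref{eq:dist-hatR} from the proof of Theorem~\ref{thm:pxrig} essentially verbatim, but applying Propositions~\ref{prop:locglob}--\ref{prop:locglob2} separately in $L^{p(\cdot)}$ to the contributions involving $F_i$ and in $L^{q(\cdot)}$ to those involving $G_i$. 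This yields the global decomposition $\nabla u - S = F + G$ with the required norm bounds, and the triangle inequality argument from the end of the proof of Theorem~\ref{thm:pxrig} permits the replacement of $S$ by its skew-symmetric part $(\langle \nabla u\rangle_\Omega)_{\skw}$. The principal technical obstacle lies in the local step, namely the construction of the operator $\mathcal L$ with simultaneous $L^{\tilde p(\cdot)}$--$L^{\tilde q(\cdot)}$ boundedness and quantitatively controlled constants: this requires a careful deployment of Calder\'on-Zygmund estimates in variable exponent spaces, which ultimately hinges on the log-H\"older regularity of the exponents through Propositions~\ref{prop:maxfun2} and~\ref{prop:pxpoisson}.
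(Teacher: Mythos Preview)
Your Whitney-cover globalization has a gap that does not arise in the single-exponent proof you are imitating. Writing $\nabla u-\hat S=\sum_i\phi_iF_i+\sum_i\phi_iG_i$ and estimating each sum via Propositions~\ref{prop:locglob}--\ref{prop:locglob2} separately in $L^{p(\cdot)}$ and $L^{q(\cdot)}$ is fine, but the patching error $\hat S-S$ produced by the weighted Poincar\'e inequality is a genuine \emph{function} that must be absorbed into either $F$ or $G$. The bound you invoke, $\|\hat S-S\|_{L^{p(\cdot)}}\le C\|d(\cdot,\partial\Omega)\nabla\hat S\|_{L^{p(\cdot)}}$, is controlled by $\sum_i\chi_{Q_i}(|F_i|+|G_i|)$ and hence only by $\|f\|_{L^{p(\cdot)}}+\|g\|_{L^{q(\cdot)}}$; this is too large to place into $F$. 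Placing it into $G$ would instead require the $L^{q(\cdot)}$ weighted Poincar\'e bound, but $\nabla\hat S$ contains the $F_i\in L^{p(\cdot)}$, which need not lie in $L^{q(\cdot)}$ at all. A repair would require a genuinely linear mixed-growth version of Proposition~\ref{prop:poincare} (applying the underlying integral operator separately to the $F$- and $G$-parts of $\nabla\hat S$), which you do not set up.

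The paper avoids this by globalizing differently. After first reducing to the case $\|g\|_{L^{q(\cdot)}}>\|f\|_{L^{p(\cdot)}}$ (otherwise take $F=\nabla u-S$, $G=0$ and use Proposition~\ref{prop:pxkorn}), it covers $\overline\Omega$ by \emph{finitely many} balls---interior ones plus boundary ones, the latter reduced to full balls via Theorem~\ref{thm:extension}. On each ball the local decomposition comes not from a single Calder\'on--Zygmund operator $\mathcal L$ but from solving two separate Poisson problems (one for $f$, one for $g$) and upgrading the harmonic remainder $w$ from $L^{p^-}$ to $L^{q^+}$ on the half-ball via the mean-value property; the standing case assumption then sends $\nabla w-S$ into $G$. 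With only finitely many pieces, the skew matrices $S_i$ are compared directly on overlaps, yielding $|S_i-S_1|\le C\|g\|_{L^{q(\cdot)}}$, and these \emph{constant} corrections are absorbed into $G$ as well. Note also that in a Whitney cover all cubes are interior, so the extension theorem you invoke in your local step would never actually be used; in the paper's finite cover it is essential precisely because boundary balls appear.
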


\begin{proof}
Suppose first that
$$\|g\|_{L^{q(\cdot)}(\Omega)}\le \|f\|_{L^{p(\cdot)}(\Omega)}.$$
By the Korn inequality in Proposition~\ref{prop:pxkorn} there exist a constant $C=C(\Omega,n,p)>0$ and a constant skew-symmetric matrix $S\in \R^{n\times n}_{\skw}$ such that 
$$ \|\nabla u- S\|_{L^{p(\cdot)}(\Omega)}\le C\|eu\|_{L^{p(\cdot)}(\Omega)},$$
where $S$ can be taken as $(\langle\nabla u\rangle_\Omega)_{\skw}$. Then, Theorem~\ref{thm:pxqxkorn} is true by taking $F=\nabla u-S$ and $G=0$. Indeed, we have by Proposition~\ref{prop:spq} 
\begin{align*}
 \|\nabla u- S\|_{L^{p(\cdot)}(\Omega)}&\le C \|eu\|_{L^{p(\cdot)}(\Omega)}\le C\left(\|f\|_{L^{p(\cdot)}(\Omega)}+\|g\|_{L^{p(\cdot)}(\Omega)}\right)\\
&\le C\left(\|f\|_{L^{p(\cdot)}(\Omega)}+\|g\|_{L^{q(\cdot)}(\Omega)}\right)\le C\|f\|_{L^{p(\cdot)}(\Omega)}
\end{align*}
for a constant $C=C(\Omega,n,p,q)>0$. Therefore, from now on we assume that 
\begin{align}\label{fromnowon}
\|g\|_{L^{q(\cdot)}(\Omega)}>\|f\|_{L^{p(\cdot)}(\Omega)}.
 \end{align}
{\bf Step 1.} Let $x\in\Omega$ and let $r>0$ such that $B_r(x)\subset\Omega$. The function $u$ satisfies
\begin{align}\label{deltau}
-\Delta u=\nabla (\tr eu )-2\div eu=\div((\tr eu) I-2eu)\quad\text{in $\mathcal D'(B_r(x);\R^n)$}.
\end{align}
We consider the unique strong solutions $\Psi_f\in W^{2,p(\cdot)}(B_r(x);\R^{n\times n})$ and $\Psi_g\in W^{2,q(\cdot)}(B_r(x);\R^{n\times n})$ to the problems
$$ \begin{cases}
-\Delta \Psi_f=(\tr f) I-2f&\text{in $B_r(x)$},\\
\Psi_f=0&\text{on $\partial B_r(x)$},
\end{cases}\quad
\begin{cases}
-\Delta \Psi_g=(\tr g) I-2g&\text{in $B_r(x)$},\\
\Psi_g=0&\text{on $\partial B_r(x)$},
\end{cases}
$$
given by Proposition~\ref{prop:pxpoisson}, and we define $u_f:=\div \Psi_f$ and $u_g:=\div\Psi_g$. By Proposition~\ref{prop:pxpoisson} we can find a constant $C=C(r,x,n,p,q)>0$ such that 
\begin{align}\label{ufug1}
&\|\nabla u_f\|_{L^{p(\cdot)}(B_r(x))}\le \|\nabla^2 \Psi_f\|_{L^{p(\cdot)}(B_r(x))}\le C\|f\|_{L^{p(\cdot)}(B_r(x))},
\end{align}
\begin{align}\label{ufug2}
&\|\nabla u_g\|_{L^{q(\cdot)}(B_r(x))}\le \|\nabla^2 \Psi_g\|_{L^{q(\cdot)}(B_r(x))}\le C\|g\|_{L^{q(\cdot)}(B_r(x))}. 
\end{align}
 In view of~\eqref{deltau} and the definition of $u_f$, $u_g$, the function $w:=u-u_f-u_g\in W^{1,1}(B_r(x);\R^n)$ satisfies $\Delta w = 0$ in $\mathcal D'(B_r(x);\R^n)$. Hence, by Weyl's lemma $w$ can be identified with a smooth harmonic function on $B_r(x)$. Since the function $ew$ is also harmonic, by the mean value property of harmonic functions, we have
$$\|ew\|_{L^{q^+}(B_{\frac{r}{2}}(x))}\le C\|ew\|_{L^{p^-}(B_r(x))}$$
for a constant $C= C(r,n,p,q) >0$. Thus, by using H\"older's inequality in Proposition~\ref{prop:spq} multiple times and employing~\eqref{ufug1}--\eqref{ufug2}, we deduce
\begin{align*}
\|ew\|_{L^{q(\cdot)}(B_{\frac{r}{2}}(x))}&\le C\|ew\|_{L^{q^+}(B_{\frac{r}{2}}(x))}\le C\|ew\|_{L^{p^-}(B_r(x))}\le C\|ew\|_{L^{p(\cdot)}(B_r(x))}\\
&\le C\|eu\|_{L^{p(\cdot)}(B_r(x))}+C\|\nabla u_f\|_{L^{p(\cdot)}(B_r(x))}+C\|\nabla u_g\|_{L^{p(\cdot)}(B_r(x))}\\
&\le C\left(\|f\|_{L^{p(\cdot)}(B_r(x))}+\|g\|_{L^{q(\cdot)}(B_r(x))}\right)
\end{align*}
for a constant $C=C(x,r,n,p,q)>0$. By Korn's inequality in $B_{\frac{r}{2}}(x)$, see Proposition~\ref{prop:pxkorn}, there exist a constant $C=C(r,x,n,q)>0$ and a skew-symmetric matrix $S\in \R^{n\times n}_{\skw}$ such that
$$\|\nabla w-S\|_{L^{q(\cdot)}(B_{\frac{r}{2}}(x))}\le C\|ew\|_{L^{q(\cdot)}(B_{\frac{r}{2}}(x))}.$$
By setting $F=\nabla u_f$ and $G=\nabla u-\nabla u_f-S=\nabla w+\nabla u_g-S$ we get
$$ \nabla u-S=F+G\quad\text{a.e.\ in $B_r(x)$}$$
and, again by~\eqref{ufug1}--\eqref{ufug2}, 
\begin{align}\label{F,G}
\|F\|_{L^{p(\cdot)}(B_{\frac{r}{2}}(x))}\le C\|f\|_{L^{p(\cdot)}(B_r(x))},\quad \|G\|_{L^{q(\cdot)}(B_{\frac{r}{2}}(x))}\le C\left(\|f\|_{L^{p(\cdot)}(B_r(x))}+\|g\|_{L^{q(\cdot)}(B_r(x))}\right).
\end{align}

{\bf Step 2.} For all $x\in\Omega$ we consider a radius $R_x>0$ such that $B_{R_x}(x)\subset\Omega$ and for all $x\in\partial\Omega$ we consider a radius $R_x>0$ with the following properties: there exists orthonormal vectors $v_1,\dots,v_n\in\R^n$ which determine a coordinate system in $\R^n$ and a Lipschitz function $\phi_x\colon\R^{n-1}\to\R$ such that $\phi_x(0)=0$ and
\begin{align}\label{eq:Lipboundary}
B_{R_x}(x)\cap \Omega=\left\{x+\sum_{i=1}^n\xi_iv_i\in\R^n\,:\, \xi \in B_{R_x}(0),\,\xi_n<\phi_x(\xi_1,\dots,\xi_{n-1})\right\}.
\end{align}
Let $L>0$ be the uniform Lipschitz constant of all the functions $\phi_x$. For every $x\in\partial\Omega$ we define $r_x$ as the radius associated to $R_x$ given by Theorem~\ref{thm:extension}, while for $x\in\Omega$ we take $r_x=R_x$. By construction, the family $\{B_{\frac{r_x}{2}}(x)\}_{x\in\overline\Omega}$ is a cover of $\overline\Omega$, which is compact. Then there exists a finite subcover $\{B_{\frac{r_i}{2}}(x_i)\}_{i\in\{1,\dots,N\}}$ with $N=N(\Omega)\in\N$. 

If $i\in\{1,\dots,N\}$ is such that $B_{r_i}(x_i)\subset\Omega$, then by Step 1 there exist $C_i=C_i(n,p,q)>0$, $S_i\in \R^{n\times n}_{\skw}$, $F_i\in L^{p(\cdot)}(B_{r_i}(x_i);\R^{n\times n})$, and $G\in L^{q(\cdot)}(B_{r_i}(x_i);\R^{n\times n})$ satisfying
$$\nabla u-S_i=F_i+G_i\quad\text{a.e.\ in $B_{r_i}(x_i)$},$$
and, using~\eqref{fromnowon} and~\eqref{F,G}, 
$$\|F_i\|_{L^{p(\cdot)}(B_{\frac{r_i}{2}}(x_i))}\le C_i \|f\|_{L^{p(\cdot)}(\Omega)}, \quad \|G_i\|_{L^{q(\cdot)}(B_{\frac{r_i}{2}}(x_i))}\le C_i \|g\|_{L^{q(\cdot)}(\Omega)}.$$
Otherwise, $i\in\{1,\dots,N\}$ is such that $ B_{R_i}(x_i)\cap \Omega$ satisfies~\eqref{eq:Lipboundary} for a function $\phi_i\in\Lip(\R^{n-1};\R)$ with $\phi_i(0)=0$ and Lipschitz constant $L>0$. We apply Theorem~\ref{thm:extension} and we find $\tilde p_i,\tilde q_i\in\mathcal P^{\log}_b(B_{r_i}(x_i))$, $\tilde u_i\in W^{1,1}(B_{r_i}(x_i);\R^n)$, $\tilde f_i\in L^{\tilde p_i(\cdot)}(B_{r_i}(x_i);\R^{n\times n})$, and $\tilde g_i\in L^{\tilde q_i(\cdot)}(B_{r_i}(x_i);\R^{n\times n})$ satisfying
$$ e\tilde u_i=\tilde f_i+\tilde g_i\quad\text{a.e.\ on $B_{r_i}(x_i)$}.$$
Hence by Step 1 there exist $C_i=C_i(\Omega,n,p,q)>0$, $S_i\in \R^{n\times n}_{\skw}$, $ F_i\in L^{\tilde p_i(\cdot)}(B_{r_i}(x_i);\R^{n\times n})$, and $ G_i\in L^{\tilde q_i(\cdot)}(B_{r_i}(x_i);\R^{n\times n})$ satisfying
$$\nabla \tilde u_i-S_i= F_i+ G_i\quad\text{a.e.\ in $B_{r_i}(x_i)$},$$
and employing~\eqref{eq:fgest},~\eqref{fromnowon}, and~\eqref{F,G}, 
\begin{align*}
&\| F_i\|_{L^{\tilde p_i(\cdot)}(B_{\frac{r_i}{2}}(x_i))}\le C_i \|\tilde f_i\|_{L^{p(\cdot)}(B_{r_i}(x_i))}\le C_i \| f\|_{L^{p(\cdot)}(\Omega)},\\
&\|G_i\|_{L^{\tilde q_i(\cdot)}(B_{\frac{r_i}{2}}(x_i))}\le C_i \left(\|\tilde f_i\|_{L^{p(\cdot)}(B_{r_i}(x_i))}+\|\tilde g_i\|_{L^{q(\cdot)}(B_{r_i}(x_i))}\right)\le C_i \|g\|_{L^{q(\cdot)}(\Omega)}.
\end{align*}

In particular, there exists a constant $C=C(\Omega,n,p,q)>0$ such that for all $i\in\{1,\dots,N\}$ we can find $S_i\in \R^{n\times n}_{\skw}$, $F_i\in L^{p(\cdot)}(B_{\frac{r_i}{2}}(x_i)\cap\Omega;\R^{n\times n})$, $G_i\in L^{q(\cdot)}(B_{\frac{r_i}{2}}(x_i)\cap\Omega;\R^{n\times n})$ satisfying
\begin{align}\label{onceagain}
\nabla u-S_i=F_i+G_i\quad\text{a.e.\ in $B_{\frac{r_i}{2}}(x_i)\cap\Omega$},
\end{align}
and
\begin{align}\label{onceagain2}
\|F_i\|_{L^{p(\cdot)}(B_{\frac{r_i}{2}}(x_i)\cap\Omega)}\le C \|f\|_{L^{p(\cdot)}(\Omega)}, \quad \|G_i\|_{L^{q(\cdot)}(B_{\frac{r_i}{2}}(x_i)\cap\Omega)}\le C \|g\|_{L^{q(\cdot)}(\Omega)}.
\end{align}
Let us set
\begin{align*}
&0<\alpha:=\min\Big\{|B_{\frac{r_i}{2}}(x_i)\cap B_{\frac{r_j}{2}}(x_j)\cap\Omega|\,:\,i,j\in\{1,\dots,N\},B_{\frac{r_i}{2}}(x_i)\cap B_{\frac{r_j}{2}}(x_j)\cap\Omega\neq\emptyset\Big\}.
\end{align*}
Notice that if $i,j\in{1,\dots,N}$ are such that $B_{r_i}(x_i)\cap B_{r_j}(x_j)\cap\Omega\neq \emptyset$, then~\eqref{onceagain}--\eqref{onceagain2} yield 
\begin{align*}
\alpha|S_i-S_j|&\le \|S_i-S_j\|_{L^1(B_{\frac{r_i}{2}}(x_i)\cap B_{\frac{r_j}{2}}(x_j)\cap\Omega)}\\
&\le \|\nabla u-S_i\|_{L^1(B_{\frac{r_i}{2}}(x_i)\cap\Omega)}+\|\nabla u-S_j\|_{L^1(B_{\frac{r_j}{2}}(x_j)\cap\Omega)}\\
&\le C\|g\|_{L^{q(\cdot)}(\Omega)}
\end{align*}
for a constant $C=C(\Omega,n,p,q)>0$. Let us define $S:=S_1$. Since $\Omega$ is connected, from the previous estimate we deduce that for all $i\in\{1,\dots,N\}$
\begin{align}\label{onceagain3}
|S-S_i|\le C\|g\|_{L^{q(\cdot)}(\Omega)}
\end{align}
for a constant $C=C(\Omega,n,p,q)>0$.
Let us define 
\begin{align*}
E_1:=B_{\frac{r_1}{2}}(x_1)\cap\Omega\quad\text{and}\quad E_i:=\left(B_{\frac{r_i}{2}}(x_i)\setminus\bigcup_{j=1}^{i-1}E_j\right)\cap\Omega\quad\text{for $i\in\{2,\dots,N$\}}.
\end{align*}
We define
$$F:=\sum_{i=1}^N\chi_{E_i}F_i\quad\text{and}\quad G:=\sum_{i=1}^N\chi_{E_i}(G_i+S_i-S).$$
Then, by~\eqref{onceagain}--\eqref{onceagain3} we conclude 
$$\nabla u-S=F+G\quad\text{a.e.\ in $\Omega$},$$
and 
\begin{align*}
&\|F\|_{L^{p(\cdot)}(\Omega)}\le \sum_{i=1}^N\|F_i\|_{L^{p(\cdot)}(E_i)}\le C\|f\|_{L^{p(\cdot)}(\Omega)},\\
&\|G\|_{L^{q(\cdot)}(\Omega)}\le \sum_{i=1}^N\|G_i\|_{L^{q(\cdot)}(E_i)}+\sum_{i=1}^N\|S_i-S\|_{L^{q(\cdot)}(E_i)}\le C\|g\|_{L^{q(\cdot)}(\Omega)},
\end{align*}
for a constant $C=C(\Omega,n,p,q)>0$. This shows the statement in~\eqref{eq:Spqkorn} and~\eqref{eq:Spq_est} hold. 

It remains to check that $S$ can be taken as $(\langle\nabla u\rangle_\Omega)_{\skw}$. Notice that by Proposition~\ref{prop:spq} and~\eqref{fromnowon}
\begin{align*}
|\Omega||S-(\langle\nabla u\rangle_\Omega)_{\skw}|\le \int_\Omega |\nabla u(x)-S|\,\de x&\le \|F\|_{L^1(\Omega)}+\|G\|_{L^1(\Omega)}\\
&\le C\|f\|_{L^{p(\cdot)}(\Omega)}+C\|g\|_{L^{q(\cdot)}(\Omega)}\le 2C\|g\|_{L^{q(\cdot)}(\Omega)}. 
\end{align*}
Hence,~\eqref{eq:Spqkorn}--\eqref{eq:Spq_est} hold with $S$ and $G$ replaced by $(\langle\nabla u\rangle_\Omega)_{\skw}$ and $G+S-(\langle\nabla u\rangle_\Omega)_{\skw}$, respectively.
\end{proof}

We can now prove the following rigidity result, which is the generalization of~\cite[Theorem~1.1]{CDM} to the case of variable exponents $p,q\in\mathcal P^{\log}_b(\Omega)$. In this case, we have to impose the restriction that $q$ is a fixed multiple of $p$, which however is enough in view of the application we present, that is the strong convergence of Theorem~\ref{thm:strconv}.

\begin{theorem}[Geometric rigidity for mixed growth and variable exponents]\label{thm:pxqxrig}
Let $\Omega\subset\R^n$ be a bounded domain with Lipschitz boundary. Let $p\in\mathcal P^{\log}_b(\Omega)$ be such that $p^->1$. Let $\mu\in[1,\infty)$ and define $q:=\mu p\in\mathcal P^{\log}_b(\Omega)$. Assume that $u\in W^{1,1}(\Omega;\R^n)$, $f\in L^{p(\cdot)}(\Omega)$, and $g\in L^{q(\cdot)}(\Omega)$ satisfy
\begin{equation}\label{eq:d-fg}
d(\nabla u(x),SO(n)) \le f(x)+g(x)\quad\text{for a.e.\ in $x\in\Omega$}. 
\end{equation}
There exist a constant $C=C(\Omega,n,p,\mu)>0$, a rotation $R\in SO(n)$, and two functions $F\in L^{p(\cdot)}(\Omega;\R^{n\times n})$ and $G\in L^{q(\cdot)}(\Omega;\R^{n\times n})$ satisfying
\begin{equation}\label{eq:Rpq-rig}
\nabla v(x)-R=F(x)+G(x)\quad\text{for a.e.\ $x\in \Omega$}, 
\end{equation}
and 
\begin{equation}\label{eq:Rpq_est}
\|F\|_{L^{p(\cdot)}(\Omega)}\le C\|f\|_{L^{p(\cdot)}(\Omega)}\quad\text{and}\quad\|G\|_{L^{q(\cdot)}(\Omega)}\le C\|g\|_{L^{q(\cdot)}(\Omega)}.
\end{equation}
 In particular, we can take $R\in SO(n)$ such that
\begin{equation}\label{eq:R-mean2}
|R-\langle\nabla u\rangle_\Omega|=d(\langle\nabla u\rangle_\Omega,SO(n)).
\end{equation}
\end{theorem}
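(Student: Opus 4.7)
The strategy closely follows that of~\cite[Theorem 1.1]{CDM} for fixed exponents, replacing the $L^p$ rigidity, $L^p$ Korn inequality, and Lipschitz truncation by their variable-exponent counterparts developed above. First, the assumption $q = \mu p$ with $\mu \ge 1$ together with $|\Omega| < \infty$ gives the continuous embedding $L^{q(\cdot)}(\Omega) \hookrightarrow L^{p(\cdot)}(\Omega)$, so that~\eqref{eq:d-fg} yields $d(\nabla u, SO(n)) \in L^{p(\cdot)}(\Omega)$. Applying Theorem~\ref{thm:pxrig} to $u$ thus produces a rotation $R \in SO(n)$ satisfying~\eqref{eq:R-mean2} with
\[
\|\nabla u - R\|_{L^{p(\cdot)}(\Omega)} \le C\|d(\nabla u, SO(n))\|_{L^{p(\cdot)}(\Omega)} \le C\bigl(\|f\|_{L^{p(\cdot)}(\Omega)} + \|g\|_{L^{q(\cdot)}(\Omega)}\bigr).
\]
This is the rotation that will appear in~\eqref{eq:Rpq-rig}; the remaining task is to split $\nabla u - R$ into $F$ and $G$ with the separate sharp bounds~\eqref{eq:Rpq_est}.

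To this end, I would set $v(x) := R^T u(x) - x$, so that $\nabla v = R^T \nabla u - I$, $|\nabla v| = |\nabla u - R|$, and $d(R^T \nabla u, SO(n)) = d(\nabla u, SO(n))$. A Taylor expansion of the symmetric-part map near $SO(n)$ yields the pointwise quadratic estimate
\[
|ev| \le d(\nabla u, SO(n)) + C_0 |\nabla v|^2 \le f + g + C_0 |\nabla v|^2,
\]
with $C_0 = C_0(n)$. The plan is then to rewrite the right-hand side as a sum $\phi + \psi$ with $\phi \in L^{p(\cdot)}$, $\psi \in L^{q(\cdot)}$ and norms controlled by $\|f\|_{L^{p(\cdot)}}$ and $\|g\|_{L^{q(\cdot)}}$ respectively, and then apply the mixed-growth Korn inequality (Theorem~\ref{thm:pxqxkorn}) to $v$ to decompose $\nabla v - S = F_0 + G_0$ with separate sharp bounds. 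Rotating back, $\nabla u - R = R(S + F_0 + G_0)$; the skew perturbation $RS$ is absorbed into $F := R F_0 + RS$ (its size controlled by the preliminary $L^{p(\cdot)}$ rigidity above) and $G := R G_0$, producing~\eqref{eq:Rpq-rig}--\eqref{eq:Rpq_est}.

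The crux — and the reason for the hypothesis $q = \mu p$ with constant ratio $\mu$ — is the splitting of the quadratic remainder $|\nabla v|^2$, which a priori sits only in $L^{p(\cdot)/2}$ and need not belong to either $L^{p(\cdot)}$ or $L^{q(\cdot)}$. My approach would be Lipschitz truncation: apply Lemma~\ref{lem:lusin} to $v$ at level $\lambda \ge 1$, obtaining a Lipschitz $\tilde v$ with $\|\nabla \tilde v\|_{L^\infty} \le C\lambda$ and bad set $E \subseteq \{M_\Omega(\nabla v) > \lambda\}$ by inclusion (ii). On $\Omega \setminus E$ one has $|\nabla v|^2 \le C\lambda|\nabla v| \in L^{p(\cdot)}$, with norm controlled by the preliminary rigidity bound; on $E$ the contribution is pushed into the $L^{q(\cdot)}$ piece using that $\chi_E \le \chi_{\{M_\Omega(\nabla v) > \lambda\}}$ together with the continuity of the maximal operator on $L^{q(\cdot)}$ from Proposition~\ref{prop:maxfun2}. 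The assumption $q = \mu p$ enters precisely here, since it ensures $q \in \mathcal{P}^{\log}_b(\Omega)$ with $\log$-Hölder constants controlled by those of $p$, so all constants produced by these interpolation and maximal-function estimates depend only on $\Omega$, $n$, $p$, and $\mu$. A careful tuning of $\lambda$ in terms of the ratio $\|g\|_{L^{q(\cdot)}}/\|f\|_{L^{p(\cdot)}}$ — possibly iterated — is the delicate step that yields the \emph{sharp} separation of norms in~\eqref{eq:Rpq_est} rather than a combined $(\|f\|_p + \|g\|_q)$-bound. Finally,~\eqref{eq:R-mean2} follows by the same averaging argument used at the end of the proof of Theorem~\ref{thm:pxrig}.
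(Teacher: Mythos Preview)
Your outline has the right architecture (linearize near a rotation, control the quadratic remainder, feed into the mixed-growth Korn inequality), but the handling of the quadratic term $|\nabla v|^2$ and the role you assign to the hypothesis $q=\mu p$ are where it breaks down.

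The proposed splitting of $|\nabla v|^2$ does not work as stated. On the bad set $E=\{M_\Omega(\nabla v)>\lambda\}$ you want to push $|\nabla v|^2\chi_E$ into $L^{q(\cdot)}$, invoking continuity of the maximal operator on $L^{q(\cdot)}$. But you only know $\nabla v\in L^{p(\cdot)}$, so neither $\nabla v$ nor $M_\Omega(\nabla v)$ lies in $L^{q(\cdot)}$ in general, and there is nothing that forces $|\nabla v|^2\chi_E\in L^{q(\cdot)}$; on $E$ the gradient is \emph{large}, not better integrable. The remark that ``$q=\mu p$ enters precisely here, since it ensures $q\in\mathcal P^{\log}_b$'' misses the point: any $q\in\mathcal P^{\log}_b$ would give maximal-function continuity. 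The vague ``tuning of $\lambda$, possibly iterated'' is exactly the step that needs a real idea.

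What the paper does is reverse the order: apply the Lusin truncation to $u$ once, at the \emph{fixed} level $\lambda=2\sqrt{n}$, and first prove the result (Proposition~\ref{prop:Mpxqxrig}) under the extra assumption $\|\nabla v\|_{L^\infty}\le M$. The constant ratio $q=\mu p$ is then used through the exact identity $\|h\|_{L^{q(\cdot)}}^{\mu}=\||h|^{\mu}\|_{L^{p(\cdot)}}$. For bounded $h$ and $\mu\le 2$ this gives $\||h|^2\|_{L^{p(\cdot)}}\le (M+\sqrt{n})^{2-\mu}\|h\|_{L^{q(\cdot)}}^{\mu}$, so after applying the $q(\cdot)$-rigidity the quadratic remainder $|\nabla v-O|^2$ lands \emph{entirely} in $L^{p(\cdot)}$ with the correct bound; a case distinction $\|f\|_{L^{p(\cdot)}}^{1/\mu}\lessgtr\|g\|_{L^{q(\cdot)}}$ replaces your tuning of $\lambda$. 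For $\mu>2$ one inducts on $k$ with $\mu\in(2^{k-1},2^k]$, applying the previous step to the pair $(2p,\tfrac{\mu}{2})$. Only afterwards is the Lusin remainder $\nabla u-\nabla v$ split, via $\tilde f=f+C\,M_\Omega(f)$ and $\tilde g=g+C\,M_\Omega(g)$, using maximal-operator continuity on $L^{p(\cdot)}$ and $L^{q(\cdot)}$ separately (which is legitimate since $f\in L^{p(\cdot)}$ and $g\in L^{q(\cdot)}$). Finally, absorbing the skew matrix $S$ is not done by adding $RS$ to $F$ as you suggest (there is no a~priori bound $|S|\le C\|f\|_{L^{p(\cdot)}}$); instead one projects $I+S$ onto $SO(n)$ and estimates the constant $|I+S-P|$ pointwise from $|\tilde F|+|\tilde G|+f+g$, then assigns it to $F$ or $G$ according to which of $\|f\|_{L^{p(\cdot)}}$, $\|g\|_{L^{q(\cdot)}}$ dominates.
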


As done in Proposition~\ref{prop:Q0pxrig}, we first prove the result for functions $u\in W^{1,1}(\Omega;\R^n)$ with uniformly bounded gradient. To this end, we follow the lines of~\cite[Lemma~3.1]{CDM}.

\begin{proposition}\label{prop:Mpxqxrig}
Let $\Omega\subset\R^n$ be a bounded domain with Lipschitz boundary. Let $p\in\mathcal P^{\log}_b(\Omega)$ be such that $p^->1$. Let $\mu\in[1,\infty)$ and define $q:=\mu p\in\mathcal P^{\log}_b(\Omega)$. Let $M>0$ and assume that $v\in W^{1,\infty} (\Omega;\R^n)$, $f\in L^{p(\cdot)}(\Omega)$, and $g\in L^{q(\cdot)}(\Omega)$ satisfy $\|\nabla v\|_{L^\infty(\Omega)}\le M$ and
$$d(\nabla v(x),SO(n))\le f(x)+g(x)\quad\text{for a.e.\ in $x\in\Omega$}.$$
There exist a constant $C=C(M,\Omega,n,p,\mu)>0$, a rotation $R\in SO(n)$, and two functions $F\in L^{p(\cdot)}(\Omega;\R^{n\times n})$ and $G\in L^{q(\cdot)}(\Omega;\R^{n\times n})$ satisfying
\begin{equation*}
\nabla v(x)-R=F(x)+G(x)\quad\text{for a.e.\ $x\in \Omega$}, 
\end{equation*}
and 
$$\|F\|_{L^{p(\cdot)}(\Omega)}\le C\|f\|_{L^{p(\cdot)}(\Omega)}\quad\text{and}\quad\|G\|_{L^{q(\cdot)}(\Omega)}\le C\|g\|_{L^{q(\cdot)}(\Omega)}.$$
\end{proposition}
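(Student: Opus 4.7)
The strategy parallels the Friesecke--James--M\"uller scheme of Proposition~\ref{prop:Q0pxrig}, adapted to track the two exponents separately via two Poisson problems in parallel.

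\textbf{Step 1 (cofactor split).} Since $\|\nabla v\|_{L^\infty(\Omega)}\le M$ and the map $A\mapsto|\cof A-A|^2$ is smooth and vanishes on $SO(n)$, one has $|\cof\nabla v-\nabla v|\le C(M)\,d(\nabla v,SO(n))\le C(M)(f+g)$ a.e.\ in $\Omega$. Writing $h:=\cof\nabla v-\nabla v$, I set
\[
h_f:=h\,\tfrac{f}{f+g}\chi_{\{f+g>0\}},\qquad h_g:=h\,\tfrac{g}{f+g}\chi_{\{f+g>0\}},
\]
so that $h=h_f+h_g$ pointwise with $|h_f|\le C(M)f$ and $|h_g|\le C(M)g$, hence $h_f\in L^{p(\cdot)}(\Omega)$ and $h_g\in L^{q(\cdot)}(\Omega)$ with the corresponding norm bounds by $\|f\|_{L^{p(\cdot)}}$ and $\|g\|_{L^{q(\cdot)}}$, respectively.

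\textbf{Step 2 (Poisson problems).} Embed $\Omega$ in a ball $B$ (which has $C^{1,1}$ boundary), use Proposition~\ref{prop:417} to extend $p$ and $q=\mu p$ to $\mathcal P^{\log}_b(\R^n)$, and extend $h_f$, $h_g$ by zero to $B$. Two applications of Proposition~\ref{prop:pxpoisson} yield strong solutions $\Psi_f\in W^{2,p(\cdot)}(B;\R^{n\times n})$ and $\Psi_g\in W^{2,q(\cdot)}(B;\R^{n\times n})$ of $-\Delta\Psi_f=h_f$, $-\Delta\Psi_g=h_g$ with zero Dirichlet data on $\partial B$, together with the elliptic estimates $\|\nabla^2\Psi_f\|_{L^{p(\cdot)}(B)}\le C\|f\|_{L^{p(\cdot)}(\Omega)}$ and $\|\nabla^2\Psi_g\|_{L^{q(\cdot)}(B)}\le C\|g\|_{L^{q(\cdot)}(\Omega)}$. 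Setting $v_f:=\div\Psi_f$ and $v_g:=\div\Psi_g$ gives $-\Delta v_f=\div h_f$ and $-\Delta v_g=\div h_g$ on $B$, and restricting to $\Omega$ produces $\|\nabla v_f\|_{L^{p(\cdot)}(\Omega)}\le C\|f\|_{L^{p(\cdot)}(\Omega)}$ and $\|\nabla v_g\|_{L^{q(\cdot)}(\Omega)}\le C\|g\|_{L^{q(\cdot)}(\Omega)}$.

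\textbf{Step 3 (harmonic remainder and rigidity).} By the Piola identity $\div\cof\nabla v=0$, $-\Delta v=\div h=\div h_f+\div h_g$ on $\Omega$, so $w:=v-v_f-v_g$ is distributionally harmonic on $\Omega$ and hence smooth by Weyl's lemma. Applying Theorem~\ref{thm:pxrig} to $w$ yields a rotation $R\in SO(n)$ such that $\|\nabla w-R\|_{L^{p(\cdot)}(\Omega)}\le C\|d(\nabla w,SO(n))\|_{L^{p(\cdot)}(\Omega)}$. The pointwise triangle inequality $d(\nabla w,SO(n))\le d(\nabla v,SO(n))+|\nabla v_f|+|\nabla v_g|\le(f+g)+|\nabla v_f|+|\nabla v_g|$, combined with the continuous embedding $L^{q(\cdot)}(\Omega)\hookrightarrow L^{p(\cdot)}(\Omega)$ on the bounded set $\Omega$, then gives $\|\nabla w-R\|_{L^{p(\cdot)}(\Omega)}\le C(\|f\|_{L^{p(\cdot)}}+\|g\|_{L^{q(\cdot)}})$.

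\textbf{Step 4 (assembly) --- the main obstacle.} From $\nabla v-R=\nabla v_f+\nabla v_g+(\nabla w-R)$, the task is to further decompose the harmonic remainder $\nabla w-R=F_w+G_w$ with $\|F_w\|_{L^{p(\cdot)}}\le C\|f\|_{L^{p(\cdot)}}$ and $\|G_w\|_{L^{q(\cdot)}}\le C\|g\|_{L^{q(\cdot)}}$ \emph{separately}, since the naive choice absorbing $\nabla w-R$ into either $F$ or $G$ produces a bound in terms of the sum $\|f\|_{L^{p(\cdot)}}+\|g\|_{L^{q(\cdot)}}$, which is inadmissible. The smoothness of $w$ is decisive here: interior elliptic regularity for harmonic functions yields $\|\nabla w-R\|_{L^\infty(\Omega')}\le C\|\nabla w-R\|_{L^1(\Omega)}$ on every $\Omega'\Subset\Omega$, and the plan is to combine this with the extension machinery of Theorem~\ref{thm:extension} and a Whitney-type covering argument patterned on the proof of Theorem~\ref{thm:pxrig} to propagate a decoupled $L^{p(\cdot)}$/$L^{q(\cdot)}$ split up to $\partial\Omega$. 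Setting $F:=\nabla v_f+F_w$ and $G:=\nabla v_g+G_w$ then delivers the required decomposition $\nabla v-R=F+G$ with the separate norm estimates. Controlling the harmonic remainder at the boundary with the correct, decoupled scaling is expected to be the subtlest point of the argument.
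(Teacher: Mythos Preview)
Your Steps~1--3 are sound, and you correctly identify Step~4 as the crux; but Step~4 as written is not a proof, only a statement of intent, and the mechanism you sketch does not close the gap. The concrete problem is this: after Step~3 you have $\nabla v-R=\nabla v_f+\nabla v_g+(\nabla w-R)$ with $\nabla v_f$ controlled in $L^{p(\cdot)}$ by $\|f\|_{L^{p(\cdot)}}$ and $\nabla v_g$ controlled in $L^{q(\cdot)}$ by $\|g\|_{L^{q(\cdot)}}$, but the harmonic remainder $\nabla w-R$ is controlled only in $L^{p(\cdot)}$ by the \emph{sum} $\|f\|_{L^{p(\cdot)}}+\|g\|_{L^{q(\cdot)}}$. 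In the regime $\|f\|_{L^{p(\cdot)}}\le\|g\|_{L^{q(\cdot)}}$ you must place part of $\nabla w-R$ into $G$, which requires an $L^{q(\cdot)}$ bound that you do not have: $w=v-v_f-v_g$ lies only in $W^{1,p(\cdot)}(\Omega)$ (since $v_f$ is merely $W^{1,p(\cdot)}$), and harmonicity gives $L^\infty$ only on compact subsets. Your proposed remedy---interior regularity plus the extension Theorem~\ref{thm:extension} plus a Whitney covering---does not fit: the extension result is tailored to the \emph{symmetric} gradient decomposition $eu=f+g$, not to the cofactor identity, and a Whitney gluing of local rotations would itself require exactly the decoupled estimates you are trying to produce.

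The paper's argument is structurally different and avoids this obstacle. It never attempts to split the harmonic remainder directly. Instead it first applies the single-exponent rigidity of Theorem~\ref{thm:pxrig} at the \emph{higher} exponent $q$ to obtain a rotation $O$ with $\|\nabla v-O\|_{L^{q(\cdot)}}$ controlled. After a case distinction on the relative sizes of $\|f\|_{L^{p(\cdot)}}^{1/\mu}$ and $\|g\|_{L^{q(\cdot)}}$, the nontrivial case is handled by Taylor-linearizing around $O$: one writes $w=O^Tv$, $z=w-x$, and uses $|ez|\le d(\nabla v,SO(n))+C|\nabla v-O|^2$. The point is that when $\mu\in(1,2]$ the quadratic error $|\nabla v-O|^2$ is controlled in $L^{p(\cdot)}$ (since $q=\mu p$ and $\||\cdot|^2\|_{L^{p(\cdot)}}=\|\cdot\|_{L^{2p(\cdot)}}^2\le C\|\cdot\|_{L^{q(\cdot)}}^{\mu}$ via the $L^\infty$ bound on $\nabla v$), so $ez$ admits a decomposition $\tilde f+\tilde g$ with the correct separate bounds. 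At this point one invokes the mixed-growth \emph{Korn} inequality, Theorem~\ref{thm:pxqxkorn}, as a black box to obtain $\nabla z-S=\tilde F+\tilde G$ with decoupled control, and then projects $I+S$ to a rotation. General $\mu>2$ is reduced to the case $\mu\le 2$ by an induction that halves $\mu$ at each step. The decoupling you are missing is thus supplied by Theorem~\ref{thm:pxqxkorn}, which your proposal does not invoke; without it, or an equivalent substitute, Step~4 cannot be completed.
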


\begin{proof}
First of all, without loss of generality we may assume that 
\begin{equation}\label{eq:fgM}
0\le f\le M+\sqrt{n}\quad\text{and}\quad 0\le g\le M+\sqrt{n}\quad\text{a.e.\ in $\Omega$}. 
\end{equation}
 Otherwise, we replace $f$ and $g$ by 
$$0\le f':=\min\{|f|,M+\sqrt{n}\}\le |f|\quad\text{and}\quad 0\le g':=\min\{|g|,M+\sqrt{n}\}\le |g|,$$
and we observe that 
$$d(\nabla v,SO(n))\le \min\{f+g,M+\sqrt{n}\}\le \min\{|f|+|g|,M+\sqrt{n}\}\le f'+g'\quad\text{a.e.\ in $\Omega$}.$$

Clearly, the geometric rigidity result is true for $\mu=1$ by Theorem~\ref{thm:pxrig}. If $\mu\in(1,\infty)$, there exists a unique $k\in\N$ such that $\mu\in(2^{k-1},2^k]$ and we prove the geometric rigidity by induction on $k\in\N$.

{\bf Step 1.} Let $k=1$ and let us prove the result for every $p\in\mathcal P^{\log}_b(\Omega)$ and $\mu\in(1,2]$. By Theorem~\ref{thm:pxrig} applied to $q\in\mathcal P^{\log}_b(\Omega)$ and using~\eqref{eq:fgM} there exists a constant $C=C(\Omega,n,p,\mu)>0$ and $O\in SO(n)$ such that by~\eqref{eq:fgM} 
\begin{equation}\label{eq:qxrigest}
\begin{aligned}
\|\nabla v-O\|_{L^{q(\cdot)}(\Omega)}&\le C\|d(\nabla v,SO(n))\|_{L^{q(\cdot)}(\Omega)}\le C\|f\|_{L^{q(\cdot)}(\Omega)}+C\|g\|_{L^{q(\cdot)}(\Omega)}\\
&=C\||f|^\mu\|_{L^{p(\cdot)}(\Omega)}^{\frac{1}{\mu}}+C\|g\|_{L^{q(\cdot)}(\Omega)}\le C(M+\sqrt{n})^{1-\frac{1}{\mu}}\|f\|_{L^{p(\cdot)}(\Omega)}^{\frac{1}{\mu}}+C\|g\|_{L^{q(\cdot)}(\Omega)}.
\end{aligned}
\end{equation}
Now, if it holds
$$\|f\|_{L^{p(\cdot)}(\Omega)}^{\frac{1}{\mu}}\le \|g\|_{L^{q(\cdot)}(\Omega)},$$
then the assertion follows by taking $F=0$ and $G=\nabla v- O $. If instead we have
\begin{equation}\label{eq:fgpq}
\|f\|_{L^{p(\cdot)}(\Omega)}^{\frac{1}{\mu}}>\|g\|_{L^{q(\cdot)}(\Omega)}, 
\end{equation}
then we consider the function $w(x):=O^Tv(x)$ for all $x\in\Omega$. By Taylor's expansion we have 
\begin{equation*}
d(A,SO(n))=|A_{\sym}-I|+O(|A-I|^2)\quad\text{for $|A-I|\to 0$},
\end{equation*}
 and thus we deduce 
\begin{equation}\label{eq:taylor}
\left|d(A,SO(n))-|A_{\sym}-I|\right|\le C|A-I|^2\quad\text{for all $A\in \R^{n\times n}$}
\end{equation}
for a constant $C=C(n)>0$. Hence there exists a constant $C=C(n)>0$ such that for a.e.\ $x\in\Omega$
\begin{equation}\label{eq:taylor2}
|ew(x)-I|\le d(\nabla w(x),SO(n))+C|\nabla w(x)-I|^2=d(\nabla v(x),SO(n))+C|\nabla v(x)-O|^2
\end{equation}
and, in view of~\eqref{eq:qxrigest} and~\eqref{eq:fgpq}, we have
\begin{equation}\label{eq:nablaO}
\begin{aligned}
\||\nabla v-O|^2\|_{L^{p(\cdot)}(\Omega)}&\le (M+\sqrt{n})^{2-\mu}\|\nabla v-O\|_{L^{q(\cdot)}(\Omega)}^\mu\\
&\le 2^{\mu-1}C^\mu(M+\sqrt{n})\|f\|_{L^{p(\cdot)}(\Omega)}+2^{\mu-1}C^\mu(M+\sqrt{n})^{2-\mu}\|g\|^\mu_{L^{q(\cdot)}(\Omega)}\\
&\le (2^{\mu-1}C^\mu(M+\sqrt{n})+2^{\mu-1}C^\mu(M+\sqrt{n})^{2-\mu})\|f\|_{L^{p(\cdot)}(\Omega)}.
\end{aligned}
\end{equation}
Let us define 
$$z(x):=w(x)-x,\quad \tilde f(x):=f(x)+C|\nabla v(x)-O|^2,\quad\tilde g(x):=g(x)\quad\text{for a.e.\ $x\in\Omega$}.$$
Then, by~\eqref{eq:taylor2} we get 
\begin{align}\label{ez}
|ez(x)|\le \tilde f(x)+\tilde g(x)\quad\text{for a.e.\ $x\in\Omega$}.
\end{align}
In particular, considering 
$$\tilde f':=\frac{\tilde f}{\tilde f+\tilde g}\chi_{\{\tilde f+\tilde g\ne 0\}}ez\quad\text{and}\quad \tilde g':=\frac{\tilde g}{\tilde f+\tilde g}\chi_{\{\tilde f+\tilde g\ne 0\}}ez$$
we have $|\tilde f'|\le \tilde f$ and $|\tilde g'|\le \tilde g$ a.e.\ in $\Omega$, and
$$ez(x)= \tilde f'(x)+\tilde g'(x)\quad\text{for a.e.\ $x\in\Omega$}.$$
By Theorem~\ref{thm:pxqxkorn} we get the existence of a constant $C=C(\Omega,n,p,\mu)>0$, a matrix $S\in \R^{n\times n}_{\skw}$, $\tilde F\in L^{p(\cdot)}(\Omega;\R^{n\times n})$, and $\tilde G\in L^{q(\cdot)}(\Omega;\R^{n\times n})$ such that 
$$\nabla z(x)-S=\nabla w(x)-I-S=\tilde F(x)+\tilde G(x)\quad\text{for a.e.\ $x\in\Omega$},$$
and, using also~\eqref{eq:nablaO} and~\eqref{ez}, we get 
$$\|\tilde F\|_{L^{p(\cdot)}(\Omega)}\le C\|\tilde f'\|_{L^{p(\cdot)}(\Omega)}\le C\|f\|_{L^{p(\cdot)}(\Omega)}\quad\text{and}\quad \|\tilde G\|_{L^{q(\cdot)}(\Omega)}\le C\|\tilde g'\|_{L^{q(\cdot)}(\Omega)}\le C\|g\|_{L^{q(\cdot)}(\Omega)}.$$
Let now $P\in SO(n)$ be such that $|I+S-P|=d(I+S,SO(n))$. Then, we have
$$|I+S-P|\le |I+S-\nabla w(x)|+d(\nabla w(x),SO(n))\le |\tilde F|+|\tilde G|+f+g.$$
By taking the norm in $L^{p(\cdot)}(\Omega)$ and using that $p(x)\le q(x)$ for a.e.\ $x\in\Omega$, we get
$$|I+S-P|\le C \|f\|_{L^{p(\cdot)}(\Omega)}+ C\|g\|_{L^{q(\cdot)}(\Omega)},$$
thanks to the previous estimates and H\"older's inequality in Proposition~\ref{prop:spq}. Therefore, setting $R:=OP\in SO(n)$, we can write for a.e.\ $x\in\Omega$
$$\nabla v(x)-R=O(\nabla w(x)-I-S)+O(I+S-P)=O\tilde F(x)+O\tilde G(x)+O(I+S-P).$$
 We now distinguish two cases. If we have
$$\|f\|_{L^{p(\cdot)}(\Omega)}\le \|g\|_{L^{q(\cdot)}(\Omega)},$$
we set $F:=O\tilde F$ and $G:=O(\tilde G+I+S-P)$, and we have
\begin{align*}
&\|F\|_{L^{p(\cdot)}(\Omega)}=\|\tilde F\|_{L^{p(\cdot)}(\Omega)}\le C\|f\|_{L^{p(\cdot)}(\Omega)},\\
&\| G \|_{L^{q(\cdot)}(\Omega)}\le \| \tilde{G} \|_{L^{q(\cdot)}(\Omega)}+\|I+S-P\|_{L^{q(\cdot)}(\Omega)}\le C\|g\|_{L^{q(\cdot)}(\Omega)}.
\end{align*}
Otherwise, for 
$$\|g\|_{L^{q(\cdot)}(\Omega)}<\|f\|_{L^{p(\cdot)}(\Omega)}$$
we set $F:=O(\tilde F+I+S-P)$ and $G:=O\tilde G$, and we have
\begin{align*}
&\|\tilde F\|_{L^{p(\cdot)}(\Omega)}\le \|F\|_{L^{p(\cdot)}(\Omega)}+\|I+S-P\|_{L^{p(\cdot)}(\Omega)}\le C\|f\|_{L^{p(\cdot)}(\Omega)},\\
&\|G\|_{L^{q(\cdot)}(\Omega)}=\|\tilde G\|_{L^{q(\cdot)}(\Omega)}\le C\|g\|_{L^{q(\cdot)}(\Omega)}.
\end{align*}
This proves the assertion for every $p\in\mathcal P^{\log}_b(\Omega)$ and $\mu\in(1,2]$.

{\bf Step 2.} Let $k\ge 2$ be fixed. Assume that the assertion is true for every $p\in\mathcal P^{\log}_b(\Omega)$ and $\mu\in (2^{k-2},2^{k-1}]$ and let us show it for every $p\in\mathcal P^{\log}_b(\Omega)$ and $\mu\in (2^{k-1},2^k]$.

We consider $2p\in\mathcal P^{\log}_b(\Omega)$ and $q=\mu p=\frac{\mu}{2}2p$. Since $\frac{\mu}{2}\in (2^{k-2},2^{k-1}]$ there exists a constant $C=C(M,\Omega,n,p,\mu)>0$, a rotation $O\in SO(n)$, and two functions $\hat F\in L^{2p(\cdot)}(\Omega;\R^{n\times n})$ and $\hat G\in L^{q(\cdot)}(\Omega;\R^{n\times n})$ satisfying 
$$\nabla v(x)-O=\hat F(x)+\hat G(x)\quad\text{for a.e.\ $x\in \Omega$},$$
and 
$$\|\hat F\|_{L^{2p(\cdot)}(\Omega)}\le C\|f\|_{L^{2p(\cdot)}(\Omega)}\quad\text{and}\quad\|\hat G\|_{L^{q(\cdot)}(\Omega)}\le C\|g\|_{L^{q(\cdot)}(\Omega)}.$$
Moreover, we may assume 
$$|\hat F|\le M+\sqrt{n}\quad\text{and}\quad |\hat G|\le M+\sqrt{n}\quad\text{a.e.\ in $\Omega$}.$$
Otherwise we replace $F$ and $G$ by 
$$\hat F':=\chi_{\Omega\setminus(A\cup B)}\hat F+\chi_A(\nabla v-O)\quad\text{and}\quad \hat G':=\chi_{\Omega\setminus(A\cup B)}\hat G+\chi_{B\setminus A}(\nabla v-O),$$
where
$$A:=\{x\in\Omega\,:\,|\hat F(x)|>M+\sqrt{n}\}\quad\text{and}\quad B:=\{x\in\Omega\,:\,|\hat G(x)|>M+\sqrt{n}\}.$$
We observe that $|\hat F'|\le \min\{|\hat F|,M+\sqrt{n}\}$ and $|\hat G'|\le \min\{|\hat G |,M+\sqrt{n}\}$ in a.e.\ in $\Omega$, and
$$\nabla v(x)-O=\hat F'(x)+\hat G'(x)\quad\text{for a.e.\ $x\in\Omega$}.$$
 Setting $w(x)=O^Tv(x)$ for a.e.\ $x\in\Omega$, by using Taylor's expansions~\eqref{eq:taylor} and arguing as in~\eqref{eq:taylor2} we get
$$|ew(x)-I|\le f(x)+g(x)+2C|\hat F(x)|^2+2C|\hat G(x)|^2\quad\text{for a.e.\ $x\in\Omega$}.$$
Let us define 
$$z(x):=w(x)-x,\quad \tilde f(x):=f(x)+2C|\hat F(x)|^2,\quad\tilde g(x):=g(x)+2C|\hat G(x)|^2\quad\text{for a.e.\ $x\in\Omega$}.$$
Then, we have
$$|ez(x)|\le \tilde f(x)+\tilde g(x)\quad\text{for a.e.\ $x\in\Omega$}$$
and
\begin{align*}
\|\tilde f\|_{L^{p(\cdot)}(\Omega)}&\le \|f\|_{L^{p(\cdot)}(\Omega)}+2C\|\hat F\|^2_{L^{2p(\cdot)}(\Omega)}\\
&\le\|f\|_{L^{p(\cdot)}(\Omega)}+2C\|f\|^2_{L^{2p(\cdot)}(\Omega)}\le C(M+\sqrt{n})\|f\|_{L^{p(\cdot)}(\Omega)}\\
\|\tilde g\|_{L^{q(\cdot)}(\Omega)}&\le \|g\|_{L^{q(\cdot)}(\Omega)}+2C\||\hat G|^2\|_{L^{q(\cdot)}(\Omega)},\\
&\le\|g\|_{L^{q(\cdot)}(\Omega)}+2C(M+\sqrt{n})\|\hat G\|_{L^{q(\cdot)}(\Omega)}\le C(M+\sqrt{n})\|g\|_{L^{q(\cdot)}(\Omega)}.
\end{align*}
By using the Korn inequality of Theorem~\ref{thm:pxqxkorn} and repeating the same argument used before for $\mu\in(1,2]$, we get the result for every $p\in\mathcal P^{\log}_b(\Omega)$ and $\mu\in(2^{k-1},2^k]$. By induction on $k\in\N$ we conclude.
\end{proof}

We can now prove Theorem~\ref{thm:pxqxrig} by using conditions (i) and (ii) of the Lusin approximation of Lemma~\ref{lem:lusin}, as done for Proposition~\ref{prop:Q0pxrig}. 

\begin{proof}[Proof of Theorem~\ref{thm:pxqxrig}]
Let $v\colon \Omega\to\R^n$ be the Lipschitz function given by the Lusin approximation of Lemma~\ref{lem:lusin} with $\lambda=2\sqrt{n}$ associated to the function $u$. Hence, there exists a constant $C=C(\Omega,n)>0$ such that 
\begin{equation}\label{eq:Cn}
\|\nabla v\|_{L^\infty(\Omega)}\le 2C\sqrt{n}. 
\end{equation}
Let us define
$$E:=\{x\in\Omega\,:\,u(x)\neq v(x)\}.$$ 
By arguing as in the proof of Proposition~\ref{prop:Q0pxrig} (see~\eqref{eq:set_est}), using property (ii) in Lemma~\ref{lem:lusin} and~\eqref{eq:d-fg}, we have
\begin{align}\label{eq:E-fg}
E\subseteq\{x\in\Omega\,:\,M_\Omega(d(\nabla u,SO(n)))(x)>\sqrt{n}\}\subseteq\{x\in\Omega\,:\,M_\Omega(f)(x)+M_\Omega(g)(x)>\sqrt{n}\}.
\end{align}
 Setting 
\begin{align*}
\tilde f:=f+(2C+1)M_\Omega (f)\quad\text{and}\quad \tilde g:=g+(2C+1)M_\Omega (g)\quad\text{a.e.\ in $\Omega$},
\end{align*}
where $C$ is the constant appearing in~\eqref{eq:Cn},
then by Proposition~\ref{prop:maxfun2} we deduce that $\tilde f\in L^{p(\cdot)}(\Omega)$, $\tilde g\in L^{q(\cdot)}(\Omega)$, and there exists a constant $C=C(\Omega,n,p,\mu)>0$ such that
$$\|\tilde f\|_{L^{p(\cdot)}(\Omega)}\le C\|f\|_{L^{p(\cdot)}(\Omega)}\quad\text{and}\quad\|\tilde g\|_{L^{q(\cdot)}(\Omega)}\le \|g\|_{L^{q(\cdot)}(\Omega)}.$$
 We claim that
\begin{equation}\label{eq:d-fg2}
d(\nabla v(x),SO(n))\le \tilde f(x)+\tilde g(x)\quad\text{for a.e.\ $x\in\Omega$}.
\end{equation}
Indeed, since $\nabla u=\nabla v$ a.e.\ on $\Omega\setminus E$, for a.e.\ $x\in\Omega\setminus E$ we have
$$d(\nabla v(x),SO(n))=d(\nabla u(x),SO(n))\le f(x)+g(x)\le \tilde f(x)+\tilde g(x).$$
Otherwise, for a.e.\ $x\in E$, by~\eqref{eq:Cn} and~\eqref{eq:E-fg} 
$$d(\nabla v(x),SO(n))\le (2C+1)\sqrt{n}\le (2C+1)M_\Omega (f)(x)+(2C+1)M_\Omega (g)(x)\le \tilde f(x)+\tilde g(x),$$
which proves~\eqref{eq:d-fg2}. By Proposition~\ref{prop:Mpxqxrig} we obtain a constant $C=C(\Omega,n,p,\mu)>0$, a rotation $R\in SO(n)$, and two functions $\tilde F\in L^{p(\cdot)}(\Omega;\R^{n\times n})$ and $\tilde G\in L^{q(\cdot)}(\Omega;\R^{n\times n})$ such that 
$$\nabla v(x)-R=\tilde F(x)+\tilde G(x)\quad\text{for a.e.\ $x\in\Omega$},$$
and
$$\|\tilde F\|_{L^{p(\cdot)}(\Omega)}\le C\|\tilde f\|_{L^{p(\cdot)}(\Omega)}\le C\|f\|_{L^{p(\cdot)}(\Omega)}\quad\text{and}\quad\|\tilde G\|_{L^{q(\cdot)}(\Omega)}\le C\|\tilde g\|_{L^{q(\cdot)}(\Omega)}\le C\|g\|_{L^{q(\cdot)}(\Omega)}.$$
Finally, thanks to~\eqref{eq:Cn} and~\eqref{eq:E-fg}, for a.e.\ $x\in\Omega$ we have
\begin{align*}
|\nabla u(x)-\nabla v(x)|&\le d(\nabla u(x),SO(n))+(2C+1)\sqrt{n}\chi_E(x) \le \tilde f(x)+\tilde g(x).
\end{align*}
Hence, by considering 
$$F:=\frac{\tilde f}{\tilde f+\tilde g}\chi_{\{\tilde f+\tilde g\ne 0\}}(\nabla u-\nabla v)+\tilde F\quad\text{and}\quad G:=\frac{\tilde g}{\tilde f+\tilde g}\chi_{\{\tilde f+\tilde g\ne 0\}}(\nabla u-\nabla v)+\tilde G,$$
 we get 
$$|F(x)|\le |\tilde f(x)|+|\tilde F(x)|\quad\text{and}\quad |G(x)|\le |\tilde g(x)|+|\tilde G(x)|\quad\text{for a.e.\ $x\in\Omega$}.$$
Therefore, $F\in L^{p(\cdot)}(\Omega;\R^{n\times n})$, $G\in L^{q(\cdot)}(\Omega;\R^{n\times n})$, and
$$\nabla u(x)-R=\nabla u(x)-\nabla v(x)+\nabla v(x)-R=F(x)+G(x)\quad\text{a.e.\ $x\in\Omega$}.$$
 This implies the rigidity result~\eqref{eq:Rpq-rig}--\eqref{eq:Rpq_est}.

Finally, if $R$, $F$, $G$ satisfies~\eqref{eq:Rpq-rig}--\eqref{eq:Rpq_est} and $\hat R\in SO(n)$ satisfies~\eqref{eq:R-mean2}, then by Proposition~\ref{prop:spq}
\begin{align*}
|\Omega||R-\hat R|\le \int_\Omega |\nabla u(x)-R|\,\de x&\le \|F\|_{L^1(\Omega)}+\|G\|_{L^1(\Omega)}\\
&\le C\|f\|_{L^{p(\cdot)}(\Omega)}+C\|g\|_{L^{q(\cdot)}(\Omega)}. 
\end{align*}
Hence, if $\|f\|_{L^{p(\cdot)}(\Omega)}\le \|g\|_{L^{q(\cdot)}(\Omega)}$, then~\eqref{eq:Rpq-rig}--\eqref{eq:Rpq_est} hold with $R$, $F$, and $G$ replaced by $\hat R$, $F$, and $G+R-\hat R$, respectively. On the contrary, if $\|f\|_{L^{p(\cdot)}(\Omega)}>\|g\|_{L^{q(\cdot)}(\Omega)}$, then~\eqref{eq:Rpq-rig}--\eqref{eq:Rpq_est} hold with $R$, $F$, and $G$ replaced by $\hat R$, $F+R-\hat R$, and $G$, respectively. In particular, the constant rotation $R$ in~\eqref{eq:Rpq-rig}--\eqref{eq:Rpq_est} depends only on $u$ and $\Omega$.

\end{proof}

As an application of Theorem~\ref{thm:pxqxrig}, we discuss that the equi-integrability on the right-hand side of~\eqref{eq:pxrig} transfers to equi-integrability on the left-hand side of~\eqref{eq:pxrig}. This corresponds to the generalization of ~\cite[Corollary~4.2]{CDM} to the setting of variable exponents. We recall the definition of equi-integrability for Lebesgue space with variable exponent. Let $E\subset\R^n$ be a measurable set with $|E|<\infty$ and let $p\in\mathcal P_b(E)$. We say that a family $\mathcal G\subset L^{p(\cdot)}(E;\R^d)$ is equi-integrable if for all $\eta>0$ there exists $M_\eta>0$ such that
$$\int_{\{x\in E\,:\,|g(x)|>M_\eta\}}|g(x)|^{p(x)}\,\de x<\eta\quad\text{for all $g\in\mathcal G$}.$$
Equivalently, by Proposition~\ref{prop:modular} the family $\mathcal G\subset L^{p(\cdot)}(E;\R^d)$ is equi-integrable if for all $\eta>0$ there exists $M_\eta>0$ such that
$$\|g\|_{L^{p(\cdot)}(\{x\in E\,:\,|g(x)|>M_\eta\})}<\eta\quad\text{for all $g\in\mathcal G$}.$$

\begin{remark}\label{rem:equi}
Notice that a family $\mathcal G\subset L^{p(\cdot)}(E;\R^d)$ is equi-integrable if and only if the family
$$\mathcal F:=\{|g(\cdot)|^{p(\cdot)}\,:\,g\in\mathcal G\}$$
is equi-integrable in $L^1(E)$. If $\mathcal G\subset L^{p(\cdot)}(E;\R^d)$ is equi-integrable, then $\mathcal G$ is bounded in $L^{p(\cdot)}(\Omega;\R^d)$. Moreover, a bounded family $\mathcal G\subset L^{p(\cdot)}(E;\R^d)$ is equi-integrable if for all $\eta>0$ there exists $\delta_\eta>0$ such that for all measurable sets $F\subseteq E$ with $|F|<\delta_\eta$ we have
\begin{align}\label{equi}
\int_F|g(x)|^{p(x)}\,\de x<\eta\quad\text{for all $g\in\mathcal G$},
\end{align}
see for instance~\cite[Proposition 3.1]{HRS}. Finally, if $\{g_j\}_j\subset L^{p(\cdot)}(E;\R^d)$ and $g_j\to g$ in $L^{p(\cdot)}(E;\R^d)$ as $j\to\infty$, then the family $\{g_j\}_j\subset L^{p(\cdot)}(E;\R^d)$ is equi-integrable.
\end{remark}

\begin{corollary}[Equi-integrability]\label{coro:equi}
Let $\Omega\subset\R^n$ be a bounded domain with Lipschitz boundary. Let $p\in\mathcal P^{\log}_b(\Omega)$ with $p^->1$. Let $\{\mu_j\}_j\subset(0,\infty)$, and $\{u_j\}_j\subset W^{1,p(\cdot)}(\Omega;\R^n)$ be such that the sequence
$$h_j:=\mu_j d(\nabla u_j,SO(n))\quad\text{a.e.\ in $\Omega$ for all $j\in\N$}$$
is equi-integrable in $L^{p(\cdot)}(\Omega)$. Then there exists a sequence of constant rotations $\{R_j\}_j \subset SO(n)$ such that the sequence
$$z_j:=\mu_j(\nabla u_j-R_j)\quad\text{a.e.\ in $\Omega$ for all $j\in\N$}$$
is equi-integrable in $L^{p(\cdot)}(\Omega;\R^{n\times n})$.
\end{corollary}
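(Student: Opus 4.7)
The strategy mirrors the fixed-exponent argument of \cite[Corollary~4.2]{CDM} combined with the mixed-growth rigidity of Theorem~\ref{thm:pxqxrig}. The plan is to split the right-hand side of the distance bound into a piece with arbitrarily small $L^{p(\cdot)}$-norm and a uniformly bounded piece that sits in the larger Lebesgue space $L^{q(\cdot)}$ for some fixed $q$ strictly above $p$.

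Fix once and for all $\mu:=2$ and set $q:=2p\in\mathcal P^{\log}_b(\Omega)$, so that $q^->2>1$ and $p(x)\le q(x)$ for a.e.\ $x\in\Omega$. For every $t>0$ and every $j$, decompose $h_j=f_j^t+g_j^t$ with
\[
f_j^t:=h_j\chi_{\{h_j>t\}},\qquad g_j^t:=h_j\chi_{\{h_j\le t\}}\le t.
\]
Equi-integrability of $\{h_j\}$ in $L^{p(\cdot)}(\Omega)$ yields, for every $\eta>0$, a threshold $t_\eta>0$ such that $\|f_j^t\|_{L^{p(\cdot)}(\Omega)}<\eta$ for all $j$ and all $t\ge t_\eta$. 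On the other hand, $g_j^t\le t$ immediately gives $\|g_j^t\|_{L^{q(\cdot)}(\Omega)}\le t\,\|1\|_{L^{q(\cdot)}(\Omega)}=:tK$.

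For each $j$ we choose $R_j\in SO(n)$ with $|R_j-\langle\nabla u_j\rangle_\Omega|=d(\langle\nabla u_j\rangle_\Omega,SO(n))$; this $R_j$ depends only on $u_j$, not on $t$. Rewriting $d(\nabla u_j,SO(n))\le (f_j^t/\mu_j)+(g_j^t/\mu_j)$ and applying Theorem~\ref{thm:pxqxrig} (with the $R$ prescribed by~\eqref{eq:R-mean2}) produces, for each $t>0$, a decomposition
\[
\nabla u_j-R_j=F_j^t+G_j^t\quad\text{a.e.\ in }\Omega,
\]
with $F_j^t\in L^{p(\cdot)}(\Omega;\R^{n\times n})$, $G_j^t\in L^{q(\cdot)}(\Omega;\R^{n\times n})$, and
\[
\|\mu_j F_j^t\|_{L^{p(\cdot)}(\Omega)}\le C\|f_j^t\|_{L^{p(\cdot)}(\Omega)},\qquad \|\mu_j G_j^t\|_{L^{q(\cdot)}(\Omega)}\le C\|g_j^t\|_{L^{q(\cdot)}(\Omega)}\le CtK,
\]
where $C=C(\Omega,n,p)>0$ is independent of $j$ and $t$. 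Then $z_j=\mu_j F_j^t+\mu_j G_j^t$, so using the elementary inequality $(a+b)^{p(x)}\le 2^{p^+}(a^{p(x)}+b^{p(x)})$ for $a,b\ge 0$, for every measurable $E\subseteq\Omega$
\[
\int_E|z_j|^{p(x)}\,\de x\le 2^{p^+}\!\int_E|\mu_j F_j^t|^{p(x)}\,\de x+2^{p^+}\!\int_E|\mu_j G_j^t|^{p(x)}\,\de x.
\]

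To conclude equi-integrability of $\{z_j\}$ in $L^{p(\cdot)}(\Omega;\R^{n\times n})$, given $\eta>0$ I first pick $t=t_\eta$ so that the first term is globally small via Proposition~\ref{prop:modular}; namely $\|\mu_j F_j^{t_\eta}\|_{L^{p(\cdot)}(\Omega)}\le C\eta$ implies $\int|\mu_j F_j^{t_\eta}|^{p(x)}\de x\le (C\eta)^{p^-}\!\vee(C\eta)^{p^+}$, which is small uniformly in $j$. For the second term, the key point is the strict gap $q=2p>p$: by H\"older's inequality (with fixed exponent $2$) and Proposition~\ref{prop:modular},
\[
\int_E|\mu_j G_j^{t_\eta}|^{p(x)}\,\de x\le |E|^{1/2}\Bigl(\int_\Omega|\mu_j G_j^{t_\eta}|^{q(x)}\,\de x\Bigr)^{1/2}\le |E|^{1/2}\bigl(1+(Ct_\eta K)^{q^+}\bigr)^{1/2},
\]
which tends to zero as $|E|\to 0$ uniformly in $j$ (with $t_\eta$ fixed). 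Choosing first $\eta$ and then $|E|$ small enough, both terms are controlled, which by the characterization recalled in Remark~\ref{rem:equi} (cf.~\eqref{equi}) yields the desired equi-integrability.

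The only delicate point is that the decomposition $F_j^t,G_j^t$ a priori depends on $t$, whereas the target $z_j$ must come from a single rotation $R_j$. This is why we invoke the last part of Theorem~\ref{thm:pxqxrig} prescribing $R_j$ through~\eqref{eq:R-mean2}: since this choice of $R_j$ is independent of the splitting of the data, we are free to use different splittings for different thresholds $t$ while keeping $z_j=\mu_j(\nabla u_j-R_j)$ unchanged. The rest is the standard truncation-and-interpolation scheme transposed to the variable-exponent setting via the norm-modular dictionary of Proposition~\ref{prop:modular}.
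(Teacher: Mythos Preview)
Your proof is correct and follows essentially the same approach as the paper's: split $h_j$ at a threshold $t$ into a small $L^{p(\cdot)}$ tail and a bounded part that sits in $L^{2p(\cdot)}$, apply Theorem~\ref{thm:pxqxrig} with $q=2p$, fix the rotation $R_j$ via~\eqref{eq:R-mean2} so that it is independent of the threshold, and then use Cauchy--Schwarz on the $G$-term together with the norm--modular dictionary of Proposition~\ref{prop:modular} to verify the small-set characterization of equi-integrability from Remark~\ref{rem:equi}. The only cosmetic difference is that the paper phrases the tail smallness via the modular and you phrase it via the norm, which are equivalent by the remark preceding Remark~\ref{rem:equi}; note also that boundedness of $\{z_j\}$ in $L^{p(\cdot)}$, needed to invoke~\eqref{equi}, follows from your estimate with $E=\Omega$.
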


We remark that in the proof below it is crucial to choose $R_j$ independent of the parameter $\eta$ introduced therein. This is actually possible, as one may expect, but was not discussed in detail in~\cite{CDM}. For the sake of completeness, we give some clarifications on the issue. 

\begin{proof}
 Let us fix $\eta\in(0,1)$. Since the sequence $\{h_j\}_j$ is equi-integrable in $L^{p(\cdot)}(\Omega)$, there exists $M_\eta\ge 1$ such that
\begin{equation}\label{eq:hjeta}
\int_{\{x\in\Omega\,:\,|h_j(x)|>M_\eta\}}|h_j(x)|^{p(x)}\,\de x<\eta. 
\end{equation}
We define
$$E_j^\eta:=\{x\in\Omega\,:\,|h_j(x)|>M_\eta\},$$
and we consider
$$d(\nabla u_j ,SO(n))=d(\nabla u_j ,SO(n))\chi_{E_j^\eta}+d(\nabla u_j ,SO(n))\chi_{(E_j^\eta)^c }=:f_j^\eta+g_j^\eta\quad\text{a.e.\ in $\Omega$}.$$
By~\eqref{eq:hjeta} we have
\begin{align*}
\int_{\Omega}|\mu_jf_j^\eta(x)|^{p(x)}\,\de x=\int_{E_j^\eta}|h_j(x)|^{p(x)}\,\de x<\eta,
\end{align*}
and by Proposition~\ref{prop:modular} we derive
$$\|\mu_j f_j\|_{L^{p(\cdot)}(\Omega)}<\eta^{\frac{1}{p^+}}.$$
Moreover
\begin{align*}
\int_{\Omega}|\mu_j g_j(x)|^{2p(x)}\,\de x=\int_{(E^\eta_j)^c}\left|h_j(x)\right|^{2p(x)}\,\de x\le |\Omega| M_\eta^{2p^+},
\end{align*}
so that by Proposition~\ref{prop:modular} and the fact that $t^\frac{1}{q}\le 1+t$ for every $q\in [1,\infty)$ and $t\in[0,\infty)$, we obtain 
$$\|\mu_j g_j\|_{L^{2p(\cdot)}(\Omega)}\le 1+|\Omega|M_\eta^{2p^+}.$$
By Theorem~\ref{thm:pxqxrig} there exist a constant $C=C(\Omega,n,p) \ge 1 $, constant rotations $R_j\in SO(n)$, and functions $F_j^\eta\in L^{p(\cdot)}(\Omega;\R^{n\times n})$, and $G_j^\eta\in L^{2p(\cdot)}(\Omega;\R^{n\times n})$ such that
\begin{equation*}
\nabla u_j -R_j=F_j^\eta+G_j^\eta\quad\text{a.e.\ in $\Omega$},
\end{equation*}
and 
\begin{equation}\label{eq:FjGj}
\|\mu_j F_j^\eta\|_{L^{p(\cdot)}(\Omega)}\le C\eta^{\frac{1}{p^+}}\quad\text{and}\quad\|\mu_j G_j^\eta\|_{L^{2p(\cdot)}(\Omega)}\le C(1+|\Omega|M_\eta^{2p^+}). 
\end{equation}
We point out that we can take $R_j$ independent on $\eta$ thanks to~\eqref{eq:R-mean2}.
Let us consider the sequence of functions $\{z_j\}_j\subset L^1(\Omega;\R^{n\times n})$ defined by
$$z_j:=\mu_j(\nabla u_j -R_j)=\mu_jF_j^\eta+\mu_jG_j^\eta\quad\text{a.e.\ in $\Omega$ for all $j\in\N$}.$$
By construction we have
\begin{equation}\label{eq:muj_est}
|z_j(x)|^{p(x)}\le 2^{p^+}\left(|\mu_jF_j^\eta(x)|^{p(x)}+|\mu_jG_j^\eta(x)|^{p(x)}\right). 
\end{equation}
For all measurable sets $E\subseteq\Omega$, by H\"older's inequality, Proposition~\ref{prop:modular}, and~\eqref{eq:FjGj}--\eqref{eq:muj_est} we obtain
\begin{equation*}
\begin{aligned}
\int_E |z_j(x)|^{p(x)}\,\de x&\le 2^{p^+}\int_{E}|\mu_jF_j^\eta(x)|^{p(x)}\,\de x+2^{p^+}\int_{E}|\mu_jG_j^\eta(x)|^{p(x)}\,\de x\\
&\le 2^{p^+}\int_{\Omega}|\mu_jF_j^\eta(x)|^{p(x)}\,\de x+2^{p^+}\left(\int_{\Omega}|\mu_jG_j^\eta(x)|^{2p(x)}\,\de x\right)^{\frac{1}{2}}|E|^{\frac{1}{2}}\\
&\le 2^{p^+}\max\{\|\mu_j F_j^\eta\|^{p^-}_{L^{p(\cdot)}(\Omega)},\|\mu_j F_j^\eta\|^{p^+}_{L^{p(\cdot)}(\Omega)}\}\\
&\quad+2^{p^+}\max\{\|\mu_j G_j^\eta\|^{p^-}_{L^{2p(\cdot)}(\Omega)},\|\mu_j G_j^\eta\|^{p^+}_{L^{2p(\cdot)}(\Omega)}\} |E|^{\frac{1}{2}}\\
&\le C\left(\eta^\frac{p^-}{p^+}+(1+M_\eta^{2(p^+)^2})|E|^\frac{1}{2}\right)
\end{aligned}
\end{equation*}
for a constant $C=C(\Omega,n,p) \ge 1 $. This implies the equi-integrability of the sequence $\{z_j\}_j$ in $L^{p(\cdot)}(\Omega;\R^{n\times n})$. In fact, we choose $\eta$ such that $C\eta^\frac{p^-}{p^+}< \frac{\theta}{2}$. Then, for all measurable sets $E\subseteq\Omega$ with $|E| <\delta_\theta$, where $C(1+M_\eta^{2(p^+)^2})\delta_\theta^\frac{1}{2}< \frac{\theta}{2}$, we get~\eqref{equi} with $\theta$ in place of $\eta$. 
\end{proof}

\section{$\Gamma$-convergence of finite elasticity energies with variable exponent growth}\label{sec:4}

In this section we extend the $\Gamma$-convergence result of~\cite{AgDMDS} to the case of a variable exponent $p$, see Theorem~\ref{thm:gamma_con} and Theorem~\ref{thm:strconv}. We start by introducing the setting of~\cite{AgDMDS} for variable exponents. 
Let $\Omega\subset\R^n$ be a bounded domain with Lipschitz boundary and let $p\in\mathcal P_b(\Omega)$ with 
$$1<p^-\le p^+\le 2.$$
We consider the function $g\colon [1,2]\times [0,\infty)\to [0,\infty)$ introduced in~\cite[Section 2]{AgDMDS}, which is defined as
\begin{equation}\label{eq:g}
g(q,t):=
\begin{cases}
\frac{t^2}{2}&\text{for $t\in[0,1]$ and $q\in[1,2]$},\\
\frac{t^{q}}{q}+\frac{1}{2}-\frac{1}{q}&\text{for $t\in(1,\infty)$ and $q\in[1,2]$}.
\end{cases}
\end{equation}
 We consider hyperelastic energies of the form
$$v \mapsto \int_\Omega W(x,\nabla v) \, \de x, $$
where $v\colon \Omega \to \R^n$ denotes the \emph{deformation} and $W\colon \Omega\times \R^{n\times n}\to [0,\infty]$ is a measurable \emph{stored energy density} satisfying the following properties for a.e.\ $x\in\Omega$:
\begin{itemize}
 \item[(i)] $W(x,\cdot)$ is frame indifferent, i.e., $W(x,RF) = W(x,F)$ for all $F \in \R^{n \times n}$ and $R \in SO(n)$;
 \item[(ii)] $W(x,\cdot)$ is of class $C^2$ in some neighbourhood of $SO(n)$, independent of $x$, where the second derivatives are bounded by a constant independent of $x$;
 \item[(iii)] $W(x,F)=0$ if $F\in SO(n)$;
 \item[(iv)] $W(x,F)\ge g(p(x),d(F,SO(n)))$.
\end{itemize}
We are interested in the regime of small deformations and prescribe boundary conditions for \emph{rescaled displacement fields} $u(x) = \frac{v(x)-x}{\varepsilon}$ for $x \in \Omega$, where the small parameter $\varepsilon>0$ represents the order of the strain. More precisely, we prescribe a Dirichlet condition $h\in W^{1,\infty}(\Omega;\R^n)$ on a part $\partial_D\Omega$ of $\partial\Omega$ with Lipschitz boundary in $\partial\Omega$, according to~\cite[Definition 2.1]{AgDMDS}, represented by the subset 
$$W^{1,p(\cdot)}_h(\Omega;\R^n):=\big\{u\in W^{1,p(\cdot)}(\Omega;\R^n)\,:\,u=h\text{ $\mathcal H^{n-1}$-a.e.\ on $\partial_D\Omega$}\big\}.$$
 Here, the equality refers to the traces of the functions on the boundary $\partial \Omega$. Accordingly, suitably rescaled energy functionals $\mathcal F_\varepsilon \colon W^{1,p(\cdot)}(\Omega;\R^n)\to [0,\infty]$ are defined by
$$\mathcal F_\varepsilon(u):=
\begin{cases}
\frac{1}{\varepsilon^2}\int_\Omega W(x,I+\varepsilon\nabla u(x))\,\de x&\text{if $u\in W^{1,p(\cdot)}_h(\Omega;\R^n)$},\\
\infty&\text{otherwise},
\end{cases}
$$
 for $\varepsilon >0$. Our goal is to identify the functional $\mathcal F \colon W^{1,p(\cdot)}(\Omega;\R^n)\to [0,\infty]$ given by 
$$\mathcal F(u):=
\begin{cases}
 \frac{1}{2} \int_\Omega D^2W(x,I)[eu(x)]^2\,\de x&\text{if $u\in W^{1,2}_h(\Omega;\R^n)$},\\
\infty&\text{otherwise},
\end{cases}
$$
 as the effective energy in the small-strain limit $\varepsilon \to 0$. Here $D^2W(x,I)[M]^2$ denotes the second derivative of $W(x,\cdot)$ applied to the pair $[M,M]$ for $M \in \R^{n \times n}$. By assumptions (i), (ii), and (iv) on $W$, $D^2W(x,I)[\cdot]^2$ is positive definite on symmetric matrices and vanishes on skew-symmetric matrices. 

We obtain the following compactness and $\Gamma$-convergence results.

\begin{proposition}[Compactness]\label{lem:equicoerv} 
Let $\Omega\subset\R^n$ be a bounded domain with Lipschitz boundary and let $p\in\mathcal P_b(\Omega)$ with 
$$1<p^-\le p^+\le 2.$$
Assume that $W$ satisfies \emph{(i)--(iv)}. There exists a constant $C=C(\Omega,n)>0$ such that for all $\{u_\varepsilon\}_{\varepsilon\in(0,1)}\subset W_h^{1,p(x)}(\Omega;\R^n)$ we have
\begin{equation}\label{eq:equicoer}
\int_\Omega|\nabla u_\varepsilon(x)|^{p(x)} \,\de x \le C\left[1+\mathcal F_\varepsilon(u_\varepsilon)+\left(\int_{\partial_D\Omega}|h|\,\de\mathcal H^{n-1}\right)^2\right]\quad\text{for all $\varepsilon\in(0,1)$}.
\end{equation}
\end{proposition}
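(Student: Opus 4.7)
The plan is to adapt the strategy of Agostiniani--Dal Maso--DeSimone~\cite{AgDMDS} to the modular $\int_\Omega |\cdot|^{p(x)}\,\de x$. Since no log-H\"older regularity is assumed on $p(\cdot)$, the essential tool is the modular rigidity estimate stated later as Theorem~\ref{thm:gpxrig}, which for $y_\varepsilon(x) := x + \varepsilon u_\varepsilon(x)$ provides a rotation $R_\varepsilon \in SO(n)$ and a constant $C=C(\Omega,n)>0$ with $\int_\Omega g(p(x), |\nabla y_\varepsilon - R_\varepsilon|)\,\de x \le C \int_\Omega g(p(x), d(\nabla y_\varepsilon, SO(n)))\,\de x$. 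The first two steps are then immediate: assumption (iv) and $\nabla y_\varepsilon = I + \varepsilon \nabla u_\varepsilon$ give $\int_\Omega g(p(x), d(\nabla y_\varepsilon, SO(n)))\,\de x \le \varepsilon^2 \mathcal F_\varepsilon(u_\varepsilon)$, and modular rigidity then yields $\int_\Omega g(p(x), |\nabla y_\varepsilon - R_\varepsilon|)\,\de x \le C \varepsilon^2 \mathcal F_\varepsilon(u_\varepsilon)$.

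The main obstacle is to bound $|I - R_\varepsilon|$ via the Dirichlet condition $u_\varepsilon = h$ on $\partial_D\Omega$. I would first upgrade the modular estimate to an $L^1(\Omega)$-bound using the elementary pointwise inequality $t \le 2 g(q,t) + 1$ (valid for every $t \ge 0$ and $q \in [1,2]$, since $g(q,t) \ge (t-1)/2$ for $t \ge 1$) together with the Chebyshev-type bound $|\{|\nabla y_\varepsilon - R_\varepsilon| > 1\}| \le 2\int g$ (from $g \ge 1/2$ on this set) and Cauchy--Schwarz on the complement where $g$ is purely quadratic. This yields $\|\nabla y_\varepsilon - R_\varepsilon\|_{L^1(\Omega)} \le C \varepsilon (1 + \mathcal F_\varepsilon(u_\varepsilon)^{1/2})$. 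Poincar\'e (with a suitable mean-zero constant $c_\varepsilon \in \R^n$) and the standard trace inequality on $\partial_D\Omega$ then give $\|y_\varepsilon - R_\varepsilon x - c_\varepsilon\|_{L^1(\partial_D\Omega)} \le C\varepsilon(1 + \mathcal F_\varepsilon(u_\varepsilon)^{1/2})$. Since $y_\varepsilon - R_\varepsilon x = (I - R_\varepsilon)x + \varepsilon h$ on $\partial_D\Omega$, and since $\{Sx - c : S \in \R^{n\times n}, c \in \R^n\}$ is a finite-dimensional space on which the $L^1(\partial_D \Omega)$-seminorm separates points (because $\partial_D\Omega$ has positive $\mathcal H^{n-1}$-measure and hence spans $\R^n$ affinely), the triangle inequality yields $|I - R_\varepsilon|^2 \le C\varepsilon^2 \bigl(1 + \mathcal F_\varepsilon(u_\varepsilon) + \bigl(\int_{\partial_D\Omega} |h|\,\de\mathcal H^{n-1}\bigr)^2\bigr)$. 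The $O(\varepsilon)$ scaling of $|I - R_\varepsilon|$ is essential for the final step to close.

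To conclude, $\varepsilon \nabla u_\varepsilon = (\nabla y_\varepsilon - R_\varepsilon) + (R_\varepsilon - I)$ combined with the quasi-subadditivity $g(q, a+b) \le C(g(q,a) + g(q,b))$ (immediate from convexity of $g(q,\cdot)$ and $g(q,0) = 0$) gives $\int_\Omega g(p(x), \varepsilon |\nabla u_\varepsilon|)\,\de x \le C\varepsilon^2 \bigl(1 + \mathcal F_\varepsilon(u_\varepsilon) + \bigl(\int_{\partial_D\Omega} |h|\,\de\mathcal H^{n-1}\bigr)^2\bigr)$. Splitting $\Omega = A_\varepsilon \cup B_\varepsilon$ with $A_\varepsilon := \{\varepsilon |\nabla u_\varepsilon| \le 1\}$: on $A_\varepsilon$, $g(p(x), \varepsilon|\nabla u_\varepsilon|) = \varepsilon^2 |\nabla u_\varepsilon|^2/2$, so dividing by $\varepsilon^2/2$ and using $|\nabla u_\varepsilon|^{p(x)} \le 1 + |\nabla u_\varepsilon|^2$ closes the estimate; on $B_\varepsilon$, the bound $s^{p(x)} \le 2 g(p(x),s) + 1$ (applied with $s = \varepsilon|\nabla u_\varepsilon| \ge 1$) together with $|B_\varepsilon| \le 2\int_{B_\varepsilon} g$ and $\varepsilon^{-p(x)} \le \varepsilon^{-2}$ (from $p \le 2$, $\varepsilon \le 1$) yield $\int_{B_\varepsilon} |\nabla u_\varepsilon|^{p(x)}\,\de x \le C(1 + \mathcal F_\varepsilon(u_\varepsilon) + (\int_{\partial_D\Omega}|h|\,\de\mathcal H^{n-1})^2)$, completing the proof.
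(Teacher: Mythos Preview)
Your approach is essentially the same as the paper's: both rely on the modular rigidity of Theorem~\ref{thm:gpxrig}, both obtain the bound on $|I-R_\varepsilon|$ via a Poincar\'e/trace argument on $\partial_D\Omega$ (the paper packages this as Lemma~\ref{lem:poincare2}, citing \cite{AgDMDS}), and both finish by splitting $\Omega$ according to whether $\varepsilon|\nabla u_\varepsilon|\le 1$.

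One small imprecision: your claimed bound $\|\nabla y_\varepsilon-R_\varepsilon\|_{L^1(\Omega)}\le C\varepsilon\bigl(1+\mathcal F_\varepsilon(u_\varepsilon)^{1/2}\bigr)$ is not quite what your own argument delivers. The contribution from $\{|\nabla y_\varepsilon-R_\varepsilon|>1\}$ is of order $\varepsilon^2\mathcal F_\varepsilon(u_\varepsilon)$, so the honest outcome is $\|\nabla y_\varepsilon-R_\varepsilon\|_{L^1(\Omega)}\le C\bigl(\varepsilon\mathcal F_\varepsilon(u_\varepsilon)^{1/2}+\varepsilon^2\mathcal F_\varepsilon(u_\varepsilon)\bigr)$, and squaring this naively produces an unwanted $\varepsilon^4\mathcal F_\varepsilon(u_\varepsilon)^2$ term. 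This does not affect the conclusion once you distinguish cases as the paper does in Lemma~\ref{lem:IRest}: if $\varepsilon^2\mathcal F_\varepsilon(u_\varepsilon)\ge 1$ the trivial bound $|I-R_\varepsilon|^2\le 4n\le 4n\,\varepsilon^2\mathcal F_\varepsilon(u_\varepsilon)$ already suffices, while if $\varepsilon^2\mathcal F_\varepsilon(u_\varepsilon)<1$ the extra term is dominated by $\varepsilon\mathcal F_\varepsilon(u_\varepsilon)^{1/2}$ and your squaring goes through.
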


\begin{theorem}[$\Gamma$-convergence]\label{thm:gamma_con}
Let $\Omega\subset\R^n$ be a bounded domain with Lipschitz boundary and let $p\in\mathcal P_b(\Omega)$ with 
$$1<p^-\le p^+\le 2.$$
Assume that $W$ satisfies \emph{(i)--(iv)}. For all sequence $\varepsilon_j\to 0$ as $j\to\infty$ we have
$$\mathcal F_{\varepsilon_j}\xrightarrow{\Gamma} \mathcal F\quad\text{as $j\to\infty$}$$
in the weak topology of $W^{1,p(\cdot)}_h(\Omega;\R^n)$. 
\end{theorem}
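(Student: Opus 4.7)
The plan is to establish the two $\Gamma$-convergence inequalities along the weak topology of $W^{1,p(\cdot)}_h(\Omega;\R^n)$, adapting the strategy of \cite{AgDMDS} by systematically replacing each fixed-exponent ingredient with its variable-exponent counterpart from the earlier sections. For the \emph{liminf inequality}, I will take $u_j \rightharpoonup u$ in $W^{1,p(\cdot)}_h$ with $\liminf_j \mathcal F_{\varepsilon_j}(u_j) < \infty$, set $y_j(x) := x + \varepsilon_j u_j(x)$, and use assumption (iv) with the energy bound to obtain a modular control
$$\int_\Omega g\bigl(p(x), d(\nabla y_j, SO(n))\bigr) \, \de x \le C \varepsilon_j^2.$$
Applying the modular rigidity result (Theorem~\ref{thm:gpxrig}) will produce constant rotations $R_j \in SO(n)$, necessarily satisfying $R_j \to I$ by weak boundedness of $u_j$, such that the analogous modular bound holds for $|\nabla y_j - R_j|$. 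I will then set $z_j := (R_j^T \nabla y_j - I)/\varepsilon_j$ and split $\Omega = E_j \cup E_j^c$ with $E_j := \{\varepsilon_j |z_j| \le 1\}$: on $E_j$ the quadratic branch of $g$ in~\eqref{eq:g} keeps $\chi_{E_j} z_j$ bounded in $L^2$, while on $E_j^c$ the subquadratic branch forces $|E_j^c| \to 0$. Extracting subsequences, $\chi_{E_j} z_j \rightharpoonup Z$ weakly in $L^2$; writing $z_j = (R_j^T - I)/\varepsilon_j + R_j^T \nabla u_j$ and using that the first summand is skew-symmetric to leading order will identify $Z_{\sym} = eu$, so that $eu \in L^2$ and Korn's inequality places $u$ in $W^{1,2}_h$. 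Frame indifference and the $C^2$-structure of $W$ near $SO(n)$ then yield on $E_j$ the Taylor estimate $\varepsilon_j^{-2} W(x, I + \varepsilon_j z_j) \ge \tfrac{1}{2} D^2 W(x,I)[z_j]^2 - \omega(\varepsilon_j |z_j|)|z_j|^2$ with $\omega(s)\to 0$ as $s\to 0$, and weak $L^2$-lower semicontinuity of the convex quadratic form $F \mapsto D^2 W(x,I)[F_{\sym}]^2$ will close the argument.

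For the \emph{limsup inequality}, I will exploit the continuous embedding $W^{1,2}_h \hookrightarrow W^{1,p(\cdot)}_h$ (since $p^+ \le 2$ and $|\Omega|<\infty$) to reduce the task to constructing recovery sequences on a dense subset of $W^{1,2}_h$ in the strong $W^{1,2}$-topology. For $u \in W^{1,\infty}_h(\Omega;\R^n)$, I will simply take $u_j := u$: since $\varepsilon_j \nabla u \to 0$ uniformly, assumptions (i)--(iii) deliver both the pointwise limit $\varepsilon_j^{-2} W(x, I + \varepsilon_j \nabla u) \to \tfrac{1}{2} D^2 W(x,I)[\nabla u]^2$ and the uniform $L^1$-domination $\varepsilon_j^{-2} W(x, I + \varepsilon_j \nabla u) \le C|\nabla u|^2$, so dominated convergence gives $\mathcal F_{\varepsilon_j}(u) \to \mathcal F(u)$. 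A standard truncation-plus-diagonalization argument, together with the lower semicontinuity of the $\Gamma$-$\limsup$, will then extend the construction to all of $W^{1,2}_h$.

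The main obstacle is the liminf step: because the $L^{p(\cdot)}$-norm and the modular are not mutually controlled under a variable exponent, the good/bad decomposition must be carried out \emph{at the modular level}, via the two-regime structure of $g$ in~\eqref{eq:g}, and only then transferred into the $L^2$-bound required for weak compactness and for the pointwise Taylor expansion. Once Theorem~\ref{thm:gpxrig} is available, this conversion is the only genuinely new ingredient relative to \cite{AgDMDS}; the subsequent $L^2$-based analysis transfers essentially verbatim.
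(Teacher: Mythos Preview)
Your plan is correct and essentially reproduces the liminf/limsup argument of \cite{AgDMDS} directly in the variable-exponent setting, invoking Theorem~\ref{thm:gpxrig} for the modular rigidity. The paper, however, takes a much shorter route: since $g(p(x),t)\ge g(p^-,t)$ for every $t\ge 0$, the density $W$ also satisfies assumption (iv) with the \emph{constant} exponent $p^-$, so \cite[Theorem~2.4]{AgDMDS} applies verbatim and yields $\Gamma$-convergence in the weak topology of $W^{1,p^-}_h(\Omega;\R^n)$. The liminf inequality then transfers to the weak topology of $W^{1,p(\cdot)}_h(\Omega;\R^n)$ by the weak continuity of the embedding $W^{1,p(\cdot)}(\Omega;\R^n)\hookrightarrow W^{1,p^-}(\Omega;\R^n)$, and the limsup inequality transfers because the recovery sequence in \cite{AgDMDS} converges strongly in $W^{1,2}(\Omega;\R^n)$, which is a stronger mode than weak $W^{1,p(\cdot)}$ since $p^+\le 2$. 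Your approach has the merit of being self-contained and of making explicit that the only genuinely variable-exponent ingredient is the $g$-rigidity of Theorem~\ref{thm:gpxrig}; the paper's approach, by contrast, avoids altogether the modular-versus-norm bookkeeping you flag as the ``main obstacle'', at the price of quoting \cite{AgDMDS} as a black box.
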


As usual in the theory of $\Gamma$-convergence, the compactness and $\Gamma$-convergence results imply convergence of minima and minimizers. The proofs of Proposition~\ref{lem:equicoerv} and Theorem~\ref{thm:gamma_con} are rather straightforward adaptations of corresponding results in~\cite{AgDMDS}, and are thus postponed to the appendix. In contrast to the previous sections, their arguments rely on a simpler rigidity estimate, see Theorem~\ref{thm:gpxrig}, and require only that $p$ is measurable.

As a refinement of the result, we use the rigidity estimate with mixed growth conditions of Theorem~\ref{thm:pxqxrig} to show the strong convergence in $W^{1,p(\cdot)}(\Omega;\R^n)$ of the minimizers of $\mathcal F_\varepsilon$ towards the minimizers of $\mathcal F$. For this part, we need to assume that $p$ is $\log$-Hölder continuous, more precisely that $p\in\mathcal P^{\log}_b(\Omega)$ with
$$1<p^-\le p^+\le 2.$$
We recall the definition of equi-integrability introduced in Section~\ref{sec:3}. Thanks to Remark~\ref{rem:equi} and Vitali's convergence theorem, we deduce the following convergence result. 

\begin{proposition}[Vitali's convergence theorem]
If $\{g_j\}_j\subset L^{p(\cdot)}(\Omega;\R^d)$ is a sequence of equi-integrable functions in $L^{p(\cdot)}(\Omega;\R^d)$ and $g_j\to g$ in measure on $\Omega$ as $j\to\infty$, then $g\in L^{p(\cdot)}(\Omega;\R^d)$ and $g_j\to g$ in $L^{p(\cdot)}(\Omega;\R^d)$.
\end{proposition}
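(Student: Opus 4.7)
\medskip

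\noindent\textbf{Proof proposal.} The plan is to reduce the variable exponent Vitali theorem to the classical $L^1$ Vitali theorem applied to the modular sequences $|g_j|^{p(\cdot)}$ and $|g_j-g|^{p(\cdot)}$, using the equivalence between equi-integrability in $L^{p(\cdot)}$ and equi-integrability of the $p(x)$-powers in $L^{1}$ recorded in Remark~\ref{rem:equi}, and then converting modular convergence to norm convergence via Proposition~\ref{prop:modular}.

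First I would establish that $g\in L^{p(\cdot)}(\Omega;\R^d)$. Since $g_j\to g$ in measure, there is a subsequence $g_{j_k}\to g$ almost everywhere in $\Omega$; by continuity of the map $(t,s)\mapsto t^s$ on $[0,\infty)\times[1,p^+]$ we also have $|g_{j_k}(x)|^{p(x)}\to |g(x)|^{p(x)}$ for a.e.\ $x\in\Omega$. By Remark~\ref{rem:equi}, the sequence $\{|g_j|^{p(\cdot)}\}_j$ is equi-integrable in $L^1(\Omega)$, and the classical Vitali convergence theorem therefore yields $|g_{j_k}|^{p(\cdot)}\to |g|^{p(\cdot)}$ in $L^1(\Omega)$. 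In particular $\int_\Omega |g(x)|^{p(x)}\,\de x<\infty$, i.e.\ $g\in L^{p(\cdot)}(\Omega;\R^d)$.

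Next I would show norm convergence of the full sequence. Setting $h_j:=|g_j-g|^{p(\cdot)}$, the pointwise inequality
\[
h_j(x)\le 2^{p^+}\bigl(|g_j(x)|^{p(x)}+|g(x)|^{p(x)}\bigr)\quad\text{for a.e.\ $x\in\Omega$}
\]
combined with the equi-integrability of $\{|g_j|^{p(\cdot)}\}_j$ and the fact that the single function $|g|^{p(\cdot)}\in L^1(\Omega)$ is trivially equi-integrable, implies that $\{h_j\}_j$ is equi-integrable in $L^1(\Omega)$; see Remark~\ref{rem:equi}. Moreover $g_j-g\to 0$ in measure, so $h_j\to 0$ in measure, and the classical Vitali theorem gives
\[
\varrho_{p(\cdot)}(g_j-g)=\int_\Omega |g_j(x)-g(x)|^{p(x)}\,\de x\longrightarrow 0\quad\text{as $j\to\infty$}.
\]

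Finally, Proposition~\ref{prop:modular} allows us to pass from modular to norm: for $j$ large enough the modular $\varrho_{p(\cdot)}(g_j-g)$ is below $1$, which forces $\|g_j-g\|_{L^{p(\cdot)}(\Omega)}\le 1$ and hence
\[
\|g_j-g\|_{L^{p(\cdot)}(\Omega)}^{p^+}\le \varrho_{p(\cdot)}(g_j-g).
\]
Therefore $\|g_j-g\|_{L^{p(\cdot)}(\Omega)}\to 0$, which is the required conclusion. No step looks to be a serious obstacle; the only point to be careful about is the dual (norm vs.\ modular) nature of Lebesgue spaces with variable exponent, but Proposition~\ref{prop:modular} takes care of it once the modular has been driven to zero.
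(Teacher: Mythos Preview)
Your proposal is correct and follows exactly the approach the paper indicates: the paper does not spell out a proof but simply states that the result follows from Remark~\ref{rem:equi} (equi-integrability in $L^{p(\cdot)}$ is equivalent to $L^1$-equi-integrability of the modular) together with the classical Vitali theorem, and you have filled in precisely these details, converting modular convergence to norm convergence via Proposition~\ref{prop:modular}.
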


Moreover, we will make use of the following result.

\begin{lemma}\label{lem:poincare2}
Let $\Omega\subset\R^n$ be a bounded domain with Lipschitz boundary. There exists a constant $C=C(\Omega,n)>0$ such that for every $\varepsilon>0$, $R\in SO(n)$, and $u\in W^{1,1}_h(\Omega;\R^n)$ we have
\begin{equation*}
|I-R|\le C\left(\int_\Omega|I+\varepsilon\nabla u(x)-R|\,\de x+\varepsilon\int_{\partial_D\Omega}h\,\de\mathcal H^{n-1}\right). 
\end{equation*}
\end{lemma}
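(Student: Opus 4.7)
I shall argue by contradiction. Suppose that no such constant $C$ exists; then, for each $k\in\N$, one can find $\varepsilon_k\in(0,\infty)$, $R_k\in SO(n)$ and $u_k\in W^{1,1}_h(\Omega;\R^n)$ violating the inequality with $C=k$. Writing $\alpha_k:=|I-R_k|\in(0,2\sqrt{n}]$, this means
$$\alpha_k > k\left(\int_\Omega|I+\varepsilon_k\nabla u_k-R_k|\,\de x+\varepsilon_k\int_{\partial_D\Omega}|h|\,\de\mathcal H^{n-1}\right),$$
and in particular the right-hand side tends to $0$. Up to a subsequence, $A_k:=(I-R_k)/\alpha_k\to A$ in $\R^{n\times n}$ with $|A|=1$.

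Next I rescale displacements by setting $W_k:=\varepsilon_k u_k/\alpha_k\in W^{1,1}(\Omega;\R^n)$. By construction $\nabla W_k + A_k=(I+\varepsilon_k\nabla u_k-R_k)/\alpha_k$, so $\nabla W_k\to -A$ strongly in $L^1(\Omega;\R^{n\times n})$, and the trace $W_k|_{\partial_D\Omega}=\varepsilon_k h/\alpha_k$ tends to $0$ in $L^1(\partial_D\Omega)$. When $h\not\equiv 0$ on $\partial_D\Omega$, the assumption forces $\varepsilon_k/\alpha_k\to 0$, so the term $\varepsilon_k\|\nabla h\|_{L^1(\Omega)}/\alpha_k$ is infinitesimal; when $h\equiv 0$ there, $W_k$ itself vanishes on $\partial_D\Omega$. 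In either case the Poincar\'e-Friedrichs inequality, applied to $W_k-\varepsilon_k h/\alpha_k$ (which vanishes on the Lipschitz piece $\partial_D\Omega$ of positive $\mathcal H^{n-1}$-measure) together with $h\in W^{1,\infty}(\Omega;\R^n)$, bounds $\{W_k\}$ uniformly in $W^{1,1}(\Omega;\R^n)$. By Rellich-Kondrachov, along a subsequence $W_k\to W_*$ in $L^1(\Omega;\R^n)$, with $\nabla W_*=-A$, hence $W_*(x)=-Ax+c$ for some $c\in\R^n$; continuity of the trace then yields $W_*|_{\partial_D\Omega}=0$, i.e., $Ax=c$ for $\mathcal H^{n-1}$-a.e.\ $x\in\partial_D\Omega$.

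The contradiction will come from an algebraic obstruction. Since $\partial_D\Omega$ is a Lipschitz $(n-1)$-dimensional piece, the identity $A(x-y)=0$ for $\mathcal H^{n-1}$-a.e.\ $x,y\in\partial_D\Omega$ forces $\ker A$ to contain an at least $(n-1)$-dimensional linear subspace, whence $\operatorname{rank} A\le 1$. On the other hand, either $R_k\to R_*\neq I$, so $A=(I-R_*)/|I-R_*|$; since every $R_*\in SO(n)$ has eigenvalues $\pm 1$ together with complex conjugate pairs $e^{\pm i\theta}$, and since $\det R_*=1$ forces $-1$ to occur with even multiplicity, $\operatorname{rank}(I-R_*)$ is always even, hence at least $2$ when $R_*\neq I$. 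Or $R_k\to I$: writing $R_k=\exp(S_k)$ for skew $S_k\to 0$ we find $A=-\lim S_k/|S_k|$, a unit skew-symmetric matrix, whose rank is even and thus at least $2$. In both cases $\operatorname{rank} A\ge 2$, contradicting $\operatorname{rank} A\le 1$. The main obstacle is precisely this final algebraic step, which crucially exploits the Lie-algebraic structure of $SO(n)$: it is the fact that $I-R$ never has rank exactly $1$ for $R\in SO(n)$ that makes the inequality hold.
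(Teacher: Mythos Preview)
Your argument is correct. The paper does not give a self-contained proof: it simply invokes \cite[Lemma~3.2]{AgDMDS} together with Poincar\'e's inequality and the continuity of the trace operator. That lemma in \cite{AgDMDS} provides, roughly, the coercivity of the affine map $x\mapsto (I-R)x-c$ on $\partial_D\Omega$, and is itself proved by a compactness argument. Your approach performs this compactness step directly on the rescaled displacements $W_k=\varepsilon_k u_k/\alpha_k$, and then isolates explicitly the algebraic core of the result: since $I-R$ always has even rank for $R\in SO(n)$ (and the same for the tangent limit $A\in\R^{n\times n}_{\skw}$), the vanishing of $-Ax+c$ on an $(n-1)$-dimensional piece $\partial_D\Omega$ is impossible unless $A=0$. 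This makes the geometric reason behind the inequality transparent, whereas the paper's route keeps it hidden inside the cited reference.

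One cosmetic point: your case distinction on whether $h\equiv 0$ on $\partial_D\Omega$ makes the proof read as if the constant might depend on $h$. You can avoid this by noting that the right-hand side is really $\int_\Omega|I+\varepsilon\nabla u-R|+\varepsilon\int_{\partial_D\Omega}|u|$, and using the Poincar\'e-type inequality $\|v\|_{L^1(\Omega)}\le C(\|\nabla v\|_{L^1(\Omega)}+\|v\|_{L^1(\partial_D\Omega)})$ directly on $W_k$; then both $\|\nabla W_k\|_{L^1(\Omega)}$ and $\|W_k\|_{L^1(\partial_D\Omega)}$ are controlled by $1/k$, and no reference to $h$ is needed.
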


\begin{proof}
The proof is a consequence of~\cite[Lemma 3.2]{AgDMDS}, Poincaré's inequality, and the continuity of the trace operator, see also~\cite[Equations (3.4) and (3.5)]{AgDMDS}.
\end{proof}

Finally, we also recall the following property that holds for recovery sequences of $\mathcal F_\varepsilon$, whose proof can be deduced from~\cite{AgDMDS}.

\begin{proposition}
Let $\Omega\subset\R^n$ be a bounded domain with Lipschitz boundary and let $p\in\mathcal P_b(\Omega)$ with 
$$1<p^-\le p^+\le 2.$$
Assume that $W$ satisfies \emph{(i)--(iv)}. Let $\varepsilon_j\to 0$ as $j\to\infty$ and let $\{u_j\}_j$ be a recovery sequence of $\mathcal F_{\varepsilon_j}$ and $\mathcal{F}$ for $u\in W^{1,2}_h(\Omega;\R^n)$, that is $u_j\rightharpoonup u$ weakly in $W^{1,p(\cdot)}(\Omega;\R^n)$ and $\mathcal F_{\varepsilon_j}(u_j)\to\mathcal F(u)$ as $j\to\infty$. Define
\begin{equation}\label{eq:Bj}
B_j:=\Big\{x\in\Omega\,:\, \varepsilon_j^\frac{1}{2}|\nabla u_j(x)|\le 1\Big\}. 
\end{equation}
Then
\begin{align}
&eu_j\chi_{B_j}\to eu\quad\text{in $L^2(\Omega;\R^{n\times n}_{\sym})$ as $j\to\infty$},\label{eq:euj}\\
&\frac{1}{\varepsilon_j^2}\int_{B_j^c}W(I+\varepsilon_j\nabla u_j(x))\,\de x\to 0\quad\text{as $j\to\infty$}.\label{eq:wuj}\\
& \text{$u_j\to u$ strongly in $W^{1,p^-}(\Omega;\R^n)$}\quad\text{as $j\to\infty$}.\label{eq:strong-wuj}
\end{align}

\end{proposition}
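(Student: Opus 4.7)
The plan is to follow closely the strategy of~\cite{AgDMDS} for recovery sequences, adapted to the variable exponent setting via the tools of Sections~\ref{sec:2} and~\ref{sec:3}. The starting observation is that on $B_j$ we have $|\varepsilon_j\nabla u_j|\le\varepsilon_j^{1/2}\to 0$ uniformly, so for $j$ large enough $I+\varepsilon_j\nabla u_j$ lies in the $C^2$-neighborhood of $SO(n)$ furnished by assumption~(ii). Since frame indifference~(i) and the minimality property~(iii) force $W(x,I)=0$ and $DW(x,I)=0$, a Taylor expansion yields, uniformly in $x\in B_j$,
$$\frac{1}{\varepsilon_j^2}W(x,I+\varepsilon_j\nabla u_j)\chi_{B_j}=\frac{1}{2}D^2W(x,I)[eu_j]^2\chi_{B_j}+r_j,\qquad |r_j|\le \omega(\varepsilon_j^{1/2})|\nabla u_j|^2\chi_{B_j},$$
where $\omega$ is a modulus of continuity and frame indifference has been used to replace $\nabla u_j$ by $eu_j$ inside the quadratic form.

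Next I would establish~\eqref{eq:euj} and~\eqref{eq:wuj} jointly. Setting $t_j:=d(I+\varepsilon_j\nabla u_j,SO(n))$, the growth~(iv) together with $t_j\le\varepsilon_j^{1/2}\le 1$ on $B_j$ gives $\|t_j/\varepsilon_j\|_{L^2(B_j)}^2\le 2\mathcal F_{\varepsilon_j}(u_j)\le C$, and a Taylor expansion of $d(\cdot,SO(n))$ near $I$ yields that $\{eu_j\chi_{B_j}\}_j$ is bounded in $L^2(\Omega;\R^{n\times n}_{\sym})$. Up to subsequence $eu_j\chi_{B_j}\rightharpoonup\xi$ in $L^2$; the limit is identified as $\xi=eu$ since $eu_j\rightharpoonup eu$ in $L^{p^-}$ (by Proposition~\ref{lem:equicoerv}) and $|B_j^c|\le C\varepsilon_j^{p^-/2}\to 0$ (from the same estimate). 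Lower semicontinuity of the convex positive-definite quadratic functional $v\mapsto \tfrac{1}{2}\int_\Omega D^2W(x,I)[v]^2\,\de x$ on $L^2(\Omega;\R^{n\times n}_{\sym})$, combined with the Taylor expansion, gives
$$\liminf_j\int_{B_j}\frac{W(x,I+\varepsilon_j\nabla u_j)}{\varepsilon_j^2}\,\de x\ge \mathcal F(u).$$
Since $\int_{B_j^c}W/\varepsilon_j^2\ge 0$ and $\mathcal F_{\varepsilon_j}(u_j)\to\mathcal F(u)$ by assumption, both liminfs must be equalities. This delivers~\eqref{eq:wuj} and, by norm convergence in the Hilbert structure induced by the quadratic form together with weak convergence, the strong $L^2$-convergence~\eqref{eq:euj}.

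For~\eqref{eq:strong-wuj}, I would use that $u_j-u=0$ on $\partial_D\Omega$ to reduce, via Korn's inequality (Proposition~\ref{prop:pxkorn}) and a Poincar\'e inequality in $W^{1,p^-}$, to showing that $e(u_j-u)\to 0$ in $L^{p^-}(\Omega;\R^{n\times n}_{\sym})$. Splitting $eu_j=eu_j\chi_{B_j}+eu_j\chi_{B_j^c}$, the first piece converges to $eu$ in $L^2\hookrightarrow L^{p^-}$ by~\eqref{eq:euj}. For the second piece, I would apply the mixed-growth rigidity Theorem~\ref{thm:pxqxrig} with $\mu=2$ to $v_j:=\mathrm{id}+\varepsilon_j u_j$, using the decomposition $t_j=t_j\chi_{\{t_j\le 1\}}+t_j\chi_{\{t_j>1\}}$. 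The energy bound~\eqref{eq:wuj} translates, via~(iv), into $t_j\chi_{B_j^c\cap\{t_j\le 1\}}/\varepsilon_j\to 0$ in $L^2$ and $t_j\chi_{B_j^c\cap\{t_j>1\}}/\varepsilon_j\to 0$ in the variable-exponent norm; combined with the mixed-growth rigidity and an equi-integrability argument in the spirit of Corollary~\ref{coro:equi}, this produces $\nabla u_j\chi_{B_j^c}\to 0$ in $L^{p^-}$.

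The main obstacle is this last step. The asymmetry between the norm $\|\cdot\|_{L^{p(\cdot)}}$ and the modular $\int|\cdot|^{p(\cdot)}\,\de x$ makes it delicate to convert the integral energy decay on $B_j^c$ into $L^{p^-}$-control of $eu_j\chi_{B_j^c}$: a naive estimate on $|B_j^c|$ or on $\int_{B_j^c}|\nabla u_j|^{p(\cdot)}\,\de x$ does not rule out concentration of $L^{p^-}$-mass of $eu_j$ on $B_j^c$, since where $|\varepsilon_j\nabla u_j|$ is of order one the deformation gradient may sit close to a rotation far from $I$. The mixed-growth rigidity Theorem~\ref{thm:pxqxrig} is exactly what is needed to separate the small-distance and large-distance regimes of $t_j$ and, together with Corollary~\ref{coro:equi}, to transfer the energy decay into strong $L^{p^-}$-convergence of the skew-subtracted gradients.
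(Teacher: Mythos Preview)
Your argument for~\eqref{eq:euj} and~\eqref{eq:wuj} is correct and is essentially the argument of~\cite{AgDMDS} (Taylor expansion on $B_j$, weak $L^2$ compactness of $eu_j\chi_{B_j}$, identification of the limit via $|B_j^c|\to 0$, and the energy-matching lower semicontinuity trick). The paper does not reproduce this computation: it simply notes that $g(p(x),t)\ge g(p^-,t)$ for all $t\ge 0$, so hypothesis~(iv) for the variable exponent implies the same hypothesis for the constant exponent $p^-$, and then all three statements~\eqref{eq:euj}--\eqref{eq:strong-wuj} are read off directly from~\cite[Theorem~2.5 and Equations~(5.7),~(5.13)]{AgDMDS}.

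There is a genuine gap in your treatment of~\eqref{eq:strong-wuj}. The proposition only assumes $p\in\mathcal P_b(\Omega)$, i.e.\ $p$ is merely measurable and bounded; no $\log$-H\"older continuity is available. Theorem~\ref{thm:pxqxrig}, Corollary~\ref{coro:equi}, and Proposition~\ref{prop:pxkorn} all require $p\in\mathcal P^{\log}_b(\Omega)$, so you cannot invoke them here. (This is not an accident: the paper explicitly separates this proposition, valid for measurable $p$, from Theorem~\ref{thm:strconv}, which upgrades the convergence to $W^{1,p(\cdot)}$ and does require the $\log$-H\"older assumption precisely in order to use those tools.) Moreover, even setting the regularity issue aside, your proposed application of Theorem~\ref{thm:pxqxrig} with $\mu=2$ does not match the natural decomposition you derive from~\eqref{eq:wuj}: the energy splits $t_j/\varepsilon_j$ into an $L^2$ piece and an $L^{p(\cdot)}$ piece, whereas Theorem~\ref{thm:pxqxrig} requires the two exponents to be $p(\cdot)$ and $\mu p(\cdot)$ for a \emph{constant} $\mu$, which fails for the pair $(p(\cdot),2)$ unless $p$ is constant.

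The fix is the monotonicity observation above: since the target space in~\eqref{eq:strong-wuj} is the fixed-exponent space $W^{1,p^-}$, and since (iv) with variable $p$ is stronger than (iv) with constant $p^-$, the conclusion is literally~\cite[Theorem~2.5]{AgDMDS} applied to the constant-exponent functionals. No variable-exponent rigidity is needed at this stage.
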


\begin{proof}
Let $\varepsilon_j\to 0$ as $j\to\infty$ and let $\{u_j\}_j$ be a recovery sequence of $\mathcal F_{\varepsilon_j}$ and $\mathcal{F}$ for $u\in W^{1,2}_h(\Omega;\R^n)$. 
Consider the extensions (with value $+\infty$) $\hat{\mathcal F}_\varepsilon,\hat{\mathcal F}\colon W^{1,p^-}(\Omega;\R^n)\to [0,\infty]$ of $\mathcal F_\varepsilon$ and $\mathcal F$, respectively. Then $\{u_j\}_j$ is a recovery sequence of $\hat{\mathcal{F}}_{\varepsilon_j}$ and $\hat{\mathcal{F}}$ for $u\in W^{1,2}_h(\Omega;\R^n)$. By the definition of $g$ it follows that $g(p(x),t)\ge g(p^-,t)$ for a.e.\ $x\in \Omega$ and every $t\in[0,\infty)$. In particular, since $1<p^-\le 2$, we can apply~\cite[Theorem~2.5]{AgDMDS} to deduce~\eqref{eq:strong-wuj}. Finally,~\eqref{eq:euj} and~\eqref{eq:wuj} can be deduced from the proof of~\cite[Theorem~2.5]{AgDMDS}, see in particular~\cite[Equations~(5.7) and~(5.13)]{AgDMDS}. 
\end{proof}

We can now prove the strong convergence in $W^{1,p(\cdot)}(\Omega;\R^n)$ of the minimizers of $\mathcal F_\varepsilon$ towards the minimizers of $\mathcal F$.

\begin{theorem}[Recovery sequences]\label{thm:strconv}
Let $\Omega\subset\R^n$ be a bounded domain with Lipschitz boundary and let $p\in\mathcal P^{\log}_b(\Omega)$ with 
$$1<p^-\le p^+\le 2.$$
Assume that $W$ satisfies \emph{(i)--(iv)}. Let $\varepsilon_j\to 0$ as $j\to\infty$ and let $\{u_j\}_j$ be a recovery sequence of $\mathcal F_{\varepsilon_j}$ and $\mathcal F$ for $u\in W^{1,2}_h(\Omega;\R^n)$. Then $u_j\to u$ strongly in $W^{1,p(\cdot)}(\Omega;\R^n)$ as $j\to\infty$.
\end{theorem}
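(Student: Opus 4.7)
The plan is to reduce the strong convergence in $W^{1,p(\cdot)}$ to an application of Corollary~\ref{coro:equi} for the deformations $v_j(x) = x + \varepsilon_j u_j(x)$, i.e.\ $\nabla v_j = I + \varepsilon_j \nabla u_j$, with scaling $\mu_j = 1/\varepsilon_j$. Set $h_j := \tfrac{1}{\varepsilon_j}\, d(\nabla v_j, SO(n))$. The crux is to show that $\{h_j\}_j$ is equi-integrable in $L^{p(\cdot)}(\Omega)$; once this is established, Corollary~\ref{coro:equi} yields constant rotations $R_j \in SO(n)$ such that $z_j := \tfrac{1}{\varepsilon_j}(\nabla v_j - R_j) = \nabla u_j + \tfrac{I-R_j}{\varepsilon_j}$ is equi-integrable in $L^{p(\cdot)}$.

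To prove equi-integrability of $h_j$, I will split $\Omega$ according to whether $\varepsilon_j h_j \le 1$ or $\varepsilon_j h_j > 1$ and exploit growth condition~(iv). On $\{\varepsilon_j h_j \le 1\}$ one has $g(p(x),\varepsilon_j h_j)=\tfrac12(\varepsilon_j h_j)^2 \le W$, hence $h_j^2 \le 2W/\varepsilon_j^2$; since $p(x)\le 2$, this implies $h_j^{p(x)} \le 1 + 2W/\varepsilon_j^2$. On $\{\varepsilon_j h_j>1\}$, using $g(p(x),\varepsilon_j h_j) \ge (\varepsilon_j h_j)^{p(x)}/p(x) - 1/p^-$ and $W \ge 1/2$ on that set, one derives $h_j^{p(x)} \le C W/\varepsilon_j^2$ for some $C=C(p)$. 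Combining, $h_j^{p(x)} \le 1 + C'\,W/\varepsilon_j^2$ a.e.\ in $\Omega$.

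The main technical obstacle is therefore to establish that $\{W(\cdot,\nabla v_j)/\varepsilon_j^2\}_j$ is equi-integrable in $L^1(\Omega)$. On $B_j^c$ this is immediate from~\eqref{eq:wuj}, which gives $L^1$-convergence to zero. On $B_j$, where $|\varepsilon_j \nabla u_j| \le \varepsilon_j^{1/2}$, I will Taylor-expand $W$ around $I$ (using $W(I)=0$ and $DW(I)=0$), obtaining
\begin{equation*}
\frac{W(x,I+\varepsilon_j\nabla u_j)}{\varepsilon_j^2}\chi_{B_j} = \tfrac12 D^2W(x,I)[eu_j]^2 \chi_{B_j} + r_j \chi_{B_j},\qquad |r_j\chi_{B_j}|\le C\varepsilon_j|\nabla u_j|^3\chi_{B_j},
\end{equation*}
where I used that $D^2 W(x,I)$ vanishes on skew matrices. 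The quadratic main term converges in $L^1$ by~\eqref{eq:euj} and the uniform bound on $D^2W(x,I)$. For the remainder, on $B_j$ we have $|\nabla u_j|\le \varepsilon_j^{-1/2}$, so $\varepsilon_j|\nabla u_j|^3 \le \varepsilon_j^{1/2}|\nabla u_j|^2$; interpolating $|\nabla u_j|^2 \le \varepsilon_j^{-(2-p^-)/2}|\nabla u_j|^{p^-}$ and using the $L^{p^-}$-boundedness of $\nabla u_j$, one finds $\|r_j\chi_{B_j}\|_{L^1} \le C\varepsilon_j^{(p^--1)/2}\to 0$. This is precisely where the assumption $p^->1$ is essential. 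Hence $W/\varepsilon_j^2\,\chi_{B_j}$ converges in $L^1$, so $W/\varepsilon_j^2$ is equi-integrable, and the pointwise domination gives equi-integrability of $h_j^{p(x)}$ in $L^1$, equivalently of $h_j$ in $L^{p(\cdot)}$ (cf.\ Remark~\ref{rem:equi}).

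Finally, with $z_j$ equi-integrable in $L^{p(\cdot)}$, boundedness of $\|z_j\|_{L^1}$ combined with Lemma~\ref{lem:poincare2} yields $|I-R_j|\le C\varepsilon_j$, so $A_j:=(R_j-I)/\varepsilon_j$ is bounded and, up to subsequence, $A_j\to A$ (skew-symmetric by $R_jR_j^T=I$). Since $\nabla u_j = z_j - A_j$, with $A_j$ bounded constants, the family $\{\nabla u_j\}_j$ inherits equi-integrability in $L^{p(\cdot)}(\Omega;\R^{n\times n})$. Strong convergence $u_j\to u$ in $W^{1,p^-}(\Omega;\R^n)$ from~\eqref{eq:strong-wuj} gives $\nabla u_j \to \nabla u$ in measure, so Vitali's convergence theorem yields $\nabla u_j\to \nabla u$ strongly in $L^{p(\cdot)}$. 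The $L^{p(\cdot)}$-convergence of $u_j$ to $u$ itself then follows from a Poincaré-type inequality applied to $u_j-u\in W^{1,p(\cdot)}_0(\Omega\cup\partial_D\Omega;\R^n)$ (which vanishes on $\partial_D\Omega$ by the common boundary condition $h$). By Urysohn's subsequence argument the full sequence converges, concluding the proof.
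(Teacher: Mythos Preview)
Your overall architecture matches the paper's: reduce to equi-integrability of $h_j=\varepsilon_j^{-1}d(\nabla v_j,SO(n))$ in $L^{p(\cdot)}$, apply Corollary~\ref{coro:equi}, then Lemma~\ref{lem:poincare2} to control $(I-R_j)/\varepsilon_j$, and finish by Vitali together with~\eqref{eq:strong-wuj}. The endgame (Step~2 in the paper) is essentially identical.

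The gap is in your treatment of $B_j$. You Taylor-expand $W$ and claim the remainder satisfies $|r_j\chi_{B_j}|\le C\varepsilon_j|\nabla u_j|^3\chi_{B_j}$. This cubic bound requires $D^2W(x,\cdot)$ to be Lipschitz (i.e.\ $W\in C^{2,1}$ or $C^3$) near $SO(n)$, uniformly in $x$. Assumption~(ii) only gives $W(x,\cdot)\in C^2$ with \emph{bounded} second derivatives; the integral form of the remainder yields merely
\[
|r_j|\le \tfrac12\big|D^2W(x,I+\theta_j\varepsilon_j\nabla u_j)-D^2W(x,I)\big|\,|\nabla u_j|^2,
\]
which is $o(1)\cdot|\nabla u_j|^2$ pointwise with no uniform rate. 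Since you only control $|\nabla u_j|^2\chi_{B_j}$ through the interpolation $|\nabla u_j|^2\le \varepsilon_j^{-(2-p^-)/2}|\nabla u_j|^{p^-}$, a mere $o(1)$ factor is not enough to kill the diverging prefactor $\varepsilon_j^{-(2-p^-)/2}$, and the argument for $\|r_j\chi_{B_j}\|_{L^1}\to 0$ breaks down. Consequently, the $L^1$-equi-integrability of $W/\varepsilon_j^2$ is not established, and your pointwise domination $h_j^{p(x)}\le 1+C'W/\varepsilon_j^2$ (which is correct) does not yield the conclusion.

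The paper sidesteps this by never expanding $W$: on $B_j$ it Taylor-expands the \emph{distance} $d(\cdot,SO(n))$, which is genuinely smooth near $SO(n)$, to get $h_j\le |eu_j|+C\varepsilon_j|\nabla u_j|^2$, hence
\[
h_j^{p(x)}\chi_{B_j}\le 2|eu_j|^{p(x)}\chi_{B_j}+2C^2\varepsilon_j^{p(x)/2}|\nabla u_j|^{p(x)}\chi_{B_j}.
\]
The first term is equi-integrable in $L^{p(\cdot)}$ directly from~\eqref{eq:euj} (since $p\le 2$), and the second tends to zero in $L^1$ because $\{\nabla u_j\}$ is bounded in $L^{p(\cdot)}$ by Proposition~\ref{lem:equicoerv}. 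On $B_j^c$ the paper uses exactly your pointwise bound $h_j^{p(x)}\le 2W/\varepsilon_j^2+1$ together with~\eqref{eq:wuj} and $|B_j^c|\to 0$. Replacing your $W$-expansion on $B_j$ with this distance expansion fixes the argument with no further changes.
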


\begin{proof}
Let $\varepsilon_j\to 0$ as $j\to\infty$ and let $\{u_j\}_j$ be a recovery sequence of $\mathcal F_{\varepsilon_j}$ and $\mathcal{F}$ for $u\in W^{1,2}_h(\Omega;\R^n)$. By~\eqref{eq:strong-wuj} we get $u_j\to u$ strongly in $W^{1,p^-}(\Omega;\R^n)$. Hence, to conclude it is enough to show that the sequence $\{\nabla u_j\}_j$ is equi-integrable in $L^{p(\cdot)}(\Omega;\R^{n\times n})$ and to apply Vitali's convergence theorem. 
%Indeed, up to a subsequence we know that
% $$\nabla u_j(x)\to \nabla u(x)\quad\text{for a.e.\ $x\in\Omega$ as $j\to\infty$}.$$
% Hence, if the sequence $\{|\nabla u_j(\cdot)|^{p(\cdot)}\}_j$ is equi-integrable, then the sequence $g_j(x):=|\nabla u_j(x)-\nabla u(x)|^{p(x)}$ for a.e.\ $x\in\Omega$ and $j\in\N$ would satisfy the assumptions of Vitali's convergence theorem with $g=0$. In particular, $g_j\to 0$ in $L^1(\Omega)$ as $j\to\infty$, which is equivalent to $\nabla u_j\to \nabla u$ strongly in $L^{p(\cdot)}(\Omega;\R^{n\times n})$ as $j\to\infty$.

{\bf Step 1.} We may assume that $\varepsilon_j\in(0,1)$ for all $j\in\N$ and we define $v_j(x):=x+\varepsilon_ju_j(x)$ for a.e.\ $x\in\Omega$. In order to prove that the sequence $\{\nabla u_j\}_j$ is equi-integrable in $L^{p(\cdot)}(\Omega;\R^{n\times n})$, we first show that the sequence
$$h_j(x):=\frac{d(\nabla v_j(x),SO(n))}{\varepsilon_j}\quad\text{for a.e.\ $x\in\Omega$ and $j\in\N$}$$
is equi-integrable in $L^{p(\cdot)}(\Omega)$. For all $j\in\N$ we consider the set $B_j$ defined in~\eqref{eq:Bj}, and we write
\begin{equation}\label{eq:fj}
h_j=h_j\chi_{B_j}+h_j\chi_{B_j^c}\quad\text{a.e.\ in $\Omega$}.
\end{equation}
Our goal is to show that both terms in~\eqref{eq:fj} are equi-integrable. By Taylor's expansion~\eqref{eq:taylor}, for a.e.\ $x\in\Omega$ we have 
\begin{align*}
d(\nabla v_j(x),SO(n))\le \varepsilon_j|eu_j(x)|+C\varepsilon_j^2|\nabla u_j(x)|^2
\end{align*}
for a constant $C=C(n) \ge 1 $. Therefore, as $p^+\le 2$, for a.e.\ $x\in\Omega$ we deduce
\begin{align*}
h_j(x)^{p(x)}\chi_{B_j}(x)\le 2|eu_j(x)|^{p(x)}\chi_{B_j}(x)+ 2C^2 \varepsilon_j^{p(x)}|\nabla u_j(x)|^{2p(x)}\chi_{B_j}(x).
\end{align*}
Then, the definition of $B_j$ implies that for a.e.\ $x\in\Omega$ it holds 
\begin{align*}
h_j(x)^{p(x)}\chi_{B_j}(x)\le 2|eu_j(x)|^{p(x)}\chi_{B_j}(x)+ 2 C^2 \varepsilon_j^{\frac{p(x)}{2}}|\nabla u_j(x)|^{p(x)}\chi_{B_j}(x).
\end{align*}
By Proposition~\ref{lem:equicoerv} the sequence $\{\nabla u_j\}_j$ is bounded in $L^{p(\cdot)}(\Omega;\R^{n\times n})$, which implies that the sequence $\{ \varepsilon_j^{\frac{1}{2}} \nabla u_j\chi_{B_j}\}_j$ converges to zero in $L^{p(\cdot)}(\Omega;\R^{n\times n})$ and is thus equi-integrable. Moreover, by~\eqref{eq:euj} we have that the sequence $\left\{eu_j\chi_{B_j}\right\}_j$ is equi-integrable in $L^2(\Omega;\R^{n\times n}_{\sym})$, that is for all $\eta>0$ there exists $M_\eta \ge 1 $ such that 
$${\int_{\{x\in B_j\,:\,|eu_j(x)|>M_\eta\}}|eu_j(x)|^2\,\de x<\eta \quad \text{for all $j \in \N$.}}$$
Thus, due to $M_\eta \ge 1$ and $p^+\le 2$, also the sequence $\{eu_j\chi_{B_j}\}_j$ is equi-integrable in $L^{p(\cdot)}(\Omega;\R^{n\times n}_{\sym})$, since
\begin{align*}
&\int_{\{x\in B_j\,:\,|eu_j(x)|> M_\eta \}}|eu_j(x)|^{p(x)}\,\de x\le \int_{\{x\in B_j\,:\,|eu_j(x)|>M_\eta\}}|eu_j(x)|^2\,\de x<\eta.
\end{align*}
This implies that the first term in~\eqref{eq:fj} is equi-integrable. 

For the second term, we observe that one can check $(t/\delta)^q\le 2g(q,t)/\delta^2+1$ for all $q\in[1,2]$, $t\in[0,\infty)$, and $\delta \in(0,1)$. Then, we use (iv) to obtain the estimate 
$$h_j(x)^{p(x)}\le \frac{2}{\varepsilon_j^2}g(p(x),d(\nabla v_j(x),SO(n)))+1\le \frac{2}{\varepsilon_j^2}W(\nabla v_j(x))+1.$$ 
Thus, by~\eqref{eq:wuj}, Chebyshev's inequality, and the fact that the sequence $\{\nabla u_j\}_j$ is bounded in $L^1(\Omega;\R^{n\times n})$, we conclude that
$$\int_{B_j^c} h_j(x)^{p(x)}\,\de x \le \frac{ 2 }{\varepsilon_j^2}\int_{B_j^c}W(\nabla v_j(x))\,\de x+|B_j^c|\to 0\quad\text{as $j\to\infty$}.$$
Hence $h_j\chi_{B_j^c}\to 0$ in $L^{p(\cdot)}(\Omega)$ as $j\to\infty$, which gives that $\{h_j\chi_{B_j^c}\}_j$ is equi-integrable in $L^{p(\cdot)}(\Omega)$.

{\bf Step 2.} By Corollary~\ref{coro:equi} for every $j\in\N$ there exists a constant rotation $R_j\in SO(n)$ such that the sequence
$$z_j(x):=\frac{\nabla v_j(x)-R_j}{\varepsilon_j}\quad\text{for a.e.\ $x\in\Omega$ and for all $j\in\N$}$$
is equi-integrable in $L^{p(\cdot)}(\Omega;\R^{n\times n})$. In particular, the sequence $\{z_j\}_j$ is bounded in $L^{p(\cdot)}(\Omega;\R^{n\times n})$. Moreover, by Lemma~\ref{lem:poincare2} there exists a constant $C=C(\Omega,n)>0$ such that
$$ |I-R_j|\le C\left(\int_\Omega|\nabla v_j(x)-R_j|\,\de x+\varepsilon_j\int_{\partial_D\Omega}|h|\,\de\mathcal H^{n-1}\right).$$
Hence, by using the inequality $t\le 1+t^q$ for every $q\in[1,2]$ and $t\in[0,\infty)$ and the boundedness of $\lbrace z_j \rbrace_j$ in $L^{p(\cdot)}(\Omega;\R^{n\times n})$ we deduce the following estimate
\begin{align*}
\frac{|I-R_j|}{\varepsilon_j}&\le C\left(\int_\Omega\left|\frac{\nabla v_j(x)-R_j}{\varepsilon_j}\right|\,\de x+\int_{\partial_D\Omega}|h|\,\de\mathcal H^{n-1}\right)\\
&=C\left(\int_\Omega|z_j(x)|\,\de x+\int_{\partial_D\Omega}|h|\,\de\mathcal H^{n-1}\right)\le C
\end{align*}
for a constant $C=C(\Omega,n,h )\ge 1$. Hence, recalling $v_j(x) = x + \varepsilon_j u_j(x)$, for all measurable sets $E\subseteq\Omega$ we get
\begin{align*}
\int_E |\nabla u_j(x)|^{p(x)}\,\de x&\le 2\int_E |z_j(x)|^{p(x)}\,\de x+2\int_E \left|\frac{I-R_j}{\varepsilon_j}\right|^{p(x)}\,\de x\le 2\int_E |z_j(x)|^{p(x)}\,\de x+2C^2|E|.
\end{align*}
From this we deduce the equi-integrability of the sequence $\{\nabla u_j\}_j$ in $L^{p(\cdot)}(\Omega;\R^{n\times n})$.
\end{proof}

\section{Appendix}

In this appendix, we give the proof of the auxiliary results used in Sections~\ref{sec:2}--\ref{sec:3} and of the $\Gamma$-convergence result of Section~\ref{sec:4}.

\subsection{Proofs of the auxiliary results}

 We start with the proof of the weighted Poincaré inequality in $W^{1,p(\cdot)}(\Omega;\R^d)$. 

\begin{proof}[Proof of Proposition~\ref{prop:poincare}]
It is enough to prove~\eqref{eq:poincare} in the case $d=1$. We follow the argument used in~\cite[Theorem~3.1]{DD}. We fix $f\in L^{p(\cdot)}(\Omega)$. Proceeding as in~\cite[Equation (3.2)]{DD} there exists $a\in\R$ and a constant $C=C(\Omega,n)>0$ such that 
$$\int_{\Omega}|f(y)-a||g(y)| \, \de y \le C\int_\Omega\left(\int_{|x-y|\le Cd(x,\partial\Omega)}\frac{|g(y)|}{|x-y|^{n-1}}\,\de y\right)|\nabla f(x)|\,\de x$$
for every function $g\in C_c^\infty(\Omega)$. By~\cite[Lemma~2.8.3]{Z} for every $x\in\R^n$ we have
$$\int_{|x-y|\le Cd(x,\partial\Omega)}\frac{|g(y)|}{|x-y|^{n-1}}\,\de y\le Cd(x,\partial\Omega)M(g)(x)$$
for a constant $C=C(\Omega,n)>0$, where $g$ is extended trivially outside of $\Omega$. Hence, thanks to Hölder's inequality in Proposition~\ref{prop:spq} and Proposition~\ref{prop:maxfun} 
\begin{align*}
\int_{\Omega}|f(y)-a||g(y)|\, \de y &\le C\int_\Omega M(g)(x)d(x,\partial\Omega)|\nabla f(x)|\,\de x\\
&\le C\|M(g)\|_{L^{p'(\cdot)}(\Omega)}\|d(\cdot,\partial\Omega)\nabla f\|_{L^{p(\cdot)}(\Omega)}\le C\|g\|_{L^{p'(\cdot)}(\Omega)}\|d(\cdot,\partial\Omega)\nabla f\|_{L^{p(\cdot)}(\Omega)}
\end{align*}
for a constant $C=C(\Omega,n,p)>0$. By~\cite[Corollary~3.4.13]{DHHR} we conclude that
\begin{align*}
\|f-a\|_{L^{p(\cdot)}(\Omega)}\le 2\sup_{g\in C_c^\infty(\Omega),\|g\|_{L^{p'(\cdot)}(\Omega)}\le 1}\int_{\Omega}|f(y)-a||g(y)|\, \de y \le C\|d(\cdot,\partial\Omega)\nabla f\|_{L^{p(\cdot)}(\Omega)}
\end{align*}
for a constant $C=C(\Omega,n,p)>0$. 
\end{proof}

We conclude this subsection with the proof of the extension result of Theorem~\ref{thm:extension}.

\begin{proof}[Proof of Theorem~\ref{thm:extension}]

In the proof, for convenience we replace the ball $B_R$ with the cube $Q_R:=(-R,R)^n$. It is clear that if the result holds for cubes, then it holds also for balls.

We introduce a regularization of the distance function $\delta\colon \R^n\setminus\overline\Omega\to [0,\infty)$, which satisfies
\begin{equation}\label{eq:delta}
0<2(x_n-\varphi(x'))\le \delta(x)\le c_1(x_n-\varphi(x'))\quad\text{for every $x\in \R^n\setminus\overline\Omega$}, 
\end{equation}
for a constant $c_1=c_1(L,n) \ge 2 $, see~\cite[Chapter 4.6.2]{stein}. The function $\delta$ lies in $C^2(\R^n\setminus\overline\Omega)$ and its derivatives satisfy
\begin{equation}\label{eq:der_delta}
|\nabla \delta(x)|\le c_2\quad\text{and}\quad |\nabla^2\delta (x)|\le \frac{c_2}{\delta (x)}\quad\text{for every $x\in\R^n\setminus\Omega$},
\end{equation}
where $c_2=c_2(L,n) \ge 1 $. In particular, since $\delta$ vanishes on $\partial\Omega$, by setting $\delta=0$ on $\overline\Omega$ we can extend $\delta$ to a Lipschitz function on $\R^n$ which still satisfies $|\nabla \delta(x)|\le c_2$ for a.e.\ $x\in\R^n$.

We fix $R>0$ and we choose $r=r(R,L,n)>0$ such that
\begin{equation}\label{eq:r}
r<\frac{R}{2c_1(1+L)}.
\end{equation}
By~\eqref{eq:delta} we deduce that
\begin{align}\label{lastpage}
(x',x_n-\lambda\delta(x))\in Q_R\cap\Omega\quad\text{for every $x=(x',x_n)\in Q_r\setminus\overline\Omega$ and for every $\lambda\in[1,2]$}.
\end{align}
We fix $p,q,u,f,g$ as in the assumptions of the theorem and we consider a function $\psi\in C^1(\R)$ which satisfies
$$\int_1^2 \psi (\lambda)\,\de\lambda=1\quad\text{and}\quad\int_1^2\lambda\psi(\lambda)\,\de\lambda=0.$$
We define $\tilde u\colon Q_r\to\R^n$ as 
\begin{equation*}
\tilde u(x):=\int_1^2\psi(\lambda)[u(x',x_n-\lambda\delta(x))-\lambda\nabla \delta(x)u_n(x',x_n-\lambda\delta(x))]\,\de\lambda\quad\text{for a.e.\ $x\in Q_r$}.
\end{equation*}
We have that $\tilde u=u$ a.e.\ on $Q_r\cap\Omega$, and in view of the computations done in~\cite{nitsche,CDM} we deduce that $\tilde u\in W^{1,1}(Q_r;\R^n)$ and
\begin{align*}
e\tilde u(x)=&\int_1^2\psi(\lambda)[eu(x',x_n-\lambda\delta(x))+\lambda^2(eu)_{nn}(x',x_n-\lambda\delta(x))\nabla \delta (x)\otimes \nabla \delta (x) )]\,\de\lambda\\
&-\int_1^2\lambda\psi(\lambda)[eu(x',x_n-\lambda\delta(x))e_n\otimes \nabla\delta(x)+\lambda \nabla\delta(x)\otimes eu(x',x_n-\lambda\delta(x))e_n]\,\de\lambda\\
&-\int_1^2\lambda\psi(\lambda)\int_1^\lambda (eu)_{nn}(x',x_n-\mu\delta(x))\delta(x)\de \mu\,\nabla^2\delta(x)\de\lambda
\end{align*}
for a.e.\ $x\in Q_r\setminus\Omega$. Therefore, it is natural to define $\tilde f\colon Q_r\to\R^{n\times n}$ as $\tilde f=f$ in $Q_r\cap\Omega$ and
\begin{equation}\label{eq:extension2}
\begin{aligned}
\tilde f(x)=&\int_1^2\psi(\lambda)[f(x',x_n-\lambda\delta(x))+\lambda^2f_{nn}(x',x_n-\lambda\delta(x))\nabla \delta (x)\otimes \nabla \delta (x))]\,\de\lambda \\
&-\int_1^2\lambda\psi(\lambda)[f(x',x_n-\lambda\delta(x))e_n\otimes \nabla\delta(x)+\lambda \nabla\delta(x)\otimes f(x',x_n-\lambda\delta(x))e_n]\,\de\lambda \\
&-\int_1^2\lambda\psi(\lambda)\int_1^\lambda f_{nn}(x',x_n-\mu\delta(x))\delta(x)\de \mu\,\nabla^2\delta(x)\,\de\lambda
\end{aligned}
\end{equation}
for a.e.\ $x\in Q_r\setminus\Omega$, and similarly for $\tilde g$. Therefore, we get that $e\tilde u=\tilde f+\tilde g$ a.e.\ on $Q_r$. Eventually, we define $\tilde p_\lambda\colon Q_r\to [1,\infty)$ as
$$\tilde p_\lambda(x):=p(x',x_n-\lambda\delta(x))\quad\text{for every $x\in Q_r$ and $\lambda\in[1,2]$},$$
 and similarly $\tilde p_\lambda$. By construction, $\tilde p_\lambda=p$ on $Q_r\cap \Omega$ and it satisfies
$$p^-\le \tilde p_\lambda^-\le \tilde p_\lambda^+\le p^+\quad\text{for every $\lambda\in[1,2]$}.$$
Moreover, for every $\lambda\in[1,2]$, by~\eqref{eq:logcon} and~\eqref{eq:der_delta} we have
\begin{equation}\label{eq:locHolder1}
|\tilde p_\lambda(x)-\tilde p_\lambda(y)|\le \frac{c_{\log}(p)}{\log(\e+1/((1+2c_2)|x-y|)}\le \frac{c_3}{\log(\e+1/|x-y|)}\quad\text{for every $x,y\in Q_r$}
\end{equation}
for a constant $c_3=c_3(R,L,n,p)>0$. Hence, by Remark~\ref{rem:H_logglob} we have that $p_\lambda\in\mathcal P^{\log}_b(Q_r)$ for every $\lambda\in[1,2]$ and the $\log$-Hölder constant of $\tilde p_\lambda$ is uniformly bounded with respect to $\lambda\in[1,2]$. We define $\tilde p\colon Q_r\to [1,\infty)$ as
$$\tilde p(x):=\min_{\lambda\in[1,2]}p_\lambda(x)\quad\text{for $x\in Q_r$}.$$
Clearly $\tilde p=p$ on $Q_r\cap\Omega$ and $p^-\le \tilde p^-\le \tilde p_\lambda\le p^+$. We claim that $\tilde p\in \mathcal P^{\log}_b(Q_r)$. Indeed, since the map $ \tilde \lambda\mapsto p_\lambda(x)$ is continuous for every fixed $x\in Q_r$, we can find a countable dense set $\{\lambda_i\}_i\subset[1,2]$ such that 
$${\tilde p(x)=\min_{i\in\N} \tilde p_{\lambda_i}(x) \quad\text{for all $x\in Q_r$}.}$$
Therefore, setting $\tilde p_k:=\min_{i=1,\dots,k} \tilde p_{\lambda_i}$ for every $k\in\N$, by~\eqref{eq:locHolder1} and the fact that the minimum of two locally $\log$-Hölder continuous functions is again locally $\log$-Hölder continuous we have
\begin{equation}\label{eq:locHolder2}
|\tilde p_k(x)-\tilde p_k(y)|\le \frac{c_3}{\log(\e+1/|x-y|)}\quad\text{for every $x,y\in Q_r$},
\end{equation}
and $\tilde p_k(x)\to \tilde p(x)$ as $k\to\infty$ for every $x\in Q_r$. This gives that $\tilde p$ satisfies~\eqref{eq:locHolder2} and proves the claim. Similarly, we can define $\tilde q\in\mathcal P^{\log}_b(Q_r)$. Since $\tilde p_\lambda(x)\le \tilde q_\lambda(x)$ for every $x,y\in Q_r$ and for every $\lambda\in[1,2]$, we conclude that $\tilde p(x)\le \tilde q(x)$ for every $x\in Q_r$. 

It remains to prove that $\tilde f$ and $\tilde g$ satisfy~\eqref{eq:fgest} for a constant $C=C(R,L,n,p,q)>0$. We only discuss the argument for $\tilde f$. Since the extension operator~\eqref{eq:extension2} is linear in $f$, in order to prove~\eqref{eq:fgest} it is enough to find a constant $\overline C=\overline C(R,L,n,p)>0$ satisfying
\begin{equation}\label{eq:estimate2}
\int_{Q_r\setminus\Omega}|\tilde f(x)|^{\tilde p(x)}\,\de x\le \overline C
\end{equation}
for every $f\in L^{p(\cdot)}(Q_R\cap\Omega; \R^{n \times n} )$ with $\|f\|_{L^{\tilde p(\cdot)}(Q_R\cap\Omega)}\le 1$. In view of the bounds~\eqref{eq:delta} and~\eqref{eq:der_delta}, we can find a constant $c_4=c_4(R,L,n) \ge 1 $ such that
$$|\tilde f(x)|\le c_4\int_1^2|f(x',x_n-\lambda\delta(x))|\,\de\lambda\quad\text{for a.e.\ $x\in\ Q_r$}.$$
Therefore, by Jensen's inequality and~\eqref{lastpage} we obtain 
\begin{equation}\label{eq:tildef}
\begin{aligned}
|\tilde f(x)|^{\tilde p(x)}&\le c_4^{p^+} \int_1^2|f(x',x_n-\lambda\delta(x))|^{\tilde p(x)}\,\de\lambda\\
&\le c_4^{p^+} \left(1+\int_1^2|f(x',x_n-\lambda\delta(x))|^{p(x',x_n-\lambda\delta(x))}\,\de\lambda\right)
\end{aligned}
\end{equation}
for a.e.\ $x\in Q_r$, where we used the elementary inequality $t^p\le 1+t^q\quad\text{for every $t\in[0,\infty)$ and $p\le q$}$.

To simplify the notation, we define
$$h(x):=|f(x)|^{p(x)}\quad\text{for a.e.\ $x\in Q_R\cap\Omega$}.$$
Notice that we can write 
$$Q_r\setminus\Omega=\{x=(x',x_n)\in\R^n\,:\, x'\in(-r,r)^{n-1},\,x_n\in(\varphi(x'),r)\}.$$
We fix $x'\in(-r,r)^{n-1}$ such that $\varphi(x')<r$. By~\eqref{eq:delta} for every $x_n\in(\varphi(x'),r)$ we deduce
\begin{align*}
\int_1^2 h(x',x_n-\lambda\delta(x))\,\de\lambda&=\int_{\delta(x)}^{2\delta(x)}\frac{1}{\delta(x)}h(x',x_n-\mu)\,\de \mu\\
&\le \frac{1}{x_n-\varphi(x')}\int_{2(x_n-\varphi(x'))}^{2c_1(x_n-\varphi(x'))}h(x',x_n-\mu)\,\de \mu\\
&=\frac{1}{x_n-\varphi(x')}\int_{x_n-2c_1(x_n-\varphi(x'))}^{x_n-2(x_n-\varphi(x'))}h(x',\nu)\,\de\nu.
\end{align*}
 Recall that $c_2 \ge 1$ and define 
\begin{align*}
&A(x'):=\Big\{(x_n,\nu)\in\R^2\,:\, x_n\in (\varphi(x'),r),\,\nu\in\big(x_n-2c_1(x_n-\varphi(x')),x_n-2(x_n-\varphi(x'))\big)\Big\},\\ 
&B(x'):=\Big\{(x_n,\nu)\in\R^2\,:\, \nu\in\big(2c_1\varphi(x')-r(2c_1-1),\varphi(x')\big),\,x_n\in\big(\frac{-\nu+2c_1\varphi(x')}{2c_1-1},-\nu+2\varphi(x')\big)\Big\}.
\end{align*}
One can easily check that $A(x')\subset B(x')$. By Tonelli's theorem we get 
\begin{align*}
&\int_{\varphi(x')}^r\frac{1}{x_n-\varphi(x')}\left(\int_{x_n-2c_1(x_n-\varphi(x'))}^{x_n-2(x_n-\varphi(x'))}h(x',\nu)\,\de \nu\right)\,\de x_n\\
&\le \int_{2c_1\varphi(x')-r(2c_1-1)}^{\varphi(x')}h(x',\nu)\left(\int_{\frac{-\nu+2c_1\varphi(x')}{2c_1-1}}^{-\nu+2\varphi(x')}\frac{1}{x_n-\varphi(x')}\,\de x_n\right)\,\de \nu\\
& = \int_{2c_1\varphi(x')-r(2c_1-1)}^{\varphi(x')}h(x',\nu) \log(2c_1-1) \,\de \nu \le\log(2c_1-1)\int_{-R}^{\varphi(x')}h(x',\nu)\,\de\nu,
\end{align*}
since $r$ satisfies~\eqref{eq:r}. Hence, by~\eqref{eq:tildef} and by applying again Tonelli's theorem we conclude that
\begin{align*}
\int_{Q_r\setminus\Omega}|\tilde f(x)|^{\tilde p(x)}\,\de x&\le c_4^{p^+} \left(|Q_R|+\int_{ (-r,r)^{n-1}}\int_{\varphi(x')}^r\int_1^2 h(x',x_n-\lambda\delta(x))\,\de\lambda\,\de x_n\,\de x'\right)\\
&\le c_4^{p^+}\left(|Q_R|+\log(2c_1-1)\int_{(-r,r)^{n-1}}\int_{-R}^{\varphi(x')}h(x',\nu)\,\de\nu\,\de x'\right)\\
&\le c_4^{p^+}\left(|Q_R|+\log(2c_1-1)\int_{Q_R\cap\Omega}|f(x)|^{p(x)}\right)\\
&\le c_4^{p^+}\left(|Q_R|+\log(2c_1-1)\right),
\end{align*}
being $\|f\|_{L^{p(\cdot)}(Q_R\cap\Omega)}\le 1$. This gives~\eqref{eq:estimate2} and concludes the proof.
\end{proof}

\subsection{Proofs of the equicoerciveness and the $\Gamma$-convergence in the weak topology }

In this part of the appendix, we give a short proof of the compactness and the $\Gamma$-convergence result of Section~\ref{sec:4}. In order to extend the analogous results of~\cite{AgDMDS} to the case of variable exponents, we first need a version of the rigidity result for the $g$-modular, which is Theorem~\ref{thm:gpxrig}. The proof follows the one of~\cite[Lemma~3.1]{AgDMDS}, and it is much easier than Theorem~\ref{thm:pxrig}, since it relies only on the rigidity for $p=2$ proved in~\cite{FJM}. Therefore, it holds true by assuming only measurability for $p$.

First of all, we recall some properties of the function $g$ introduced in~\eqref{eq:g}. For all $q\in [1,2]$ the function $t\mapsto g(q,t)$ is continuous, increasing, and convex on $[0,\infty)$, while for all $t\in[0,\infty)$ the function $q\mapsto g(q,t)$ is continuous and non decreasing on $[1,2]$. 
% Indeed, let $1\le q_1<q_2\le 2$ and let us consider the function
% $$f(t):=q_1t^{q_2}-q_2t^{q_1}\quad\text{for $t\in[1,\infty)$}.$$
% Since
% $$f'(t)=q_1q_2(t^{q_2-1}-t^{q_1-1})\ge 0\quad\text{for all $t\in[1,\infty)$},$$
% we deduce $f(t)\ge f(1)=q_1-q_2$ for all $t\in[1,\infty)$, which gives
% $$g(q_1,t)\le g(q_2,t)\quad\text{for all $t\in[1,\infty)$}.$$
Moreover, we have
\begin{equation}\label{eq:gest}
g(q,t)\le \min\{t^q,t^2\}\quad\text{for all $q\in[1,2]$ and $t\in[0,\infty)$},
\end{equation}
and 
\begin{equation}\label{eq:gqst}
g(q,s+t)\le 2\left(g(q,s)+g(q,t)\right)\quad\text{for all $q\in[1,2]$ and for all $s,t\in[0,\infty)$}.
\end{equation}
 The latter follows from the convexity of $t\mapsto g(q,t)$ and the inequality
$$g(q,2s)\le 4g(q,s)\quad\text{for all $q\in[1,2]$ and for all $s\in[0,\infty)$}.$$
Furthermore, for all $M>0$ there exists a constant $C=C(M)>0$ such that
\begin{align}
t^2&\le C g(q,t)\quad\text{for all $q\in[1,2]$ and for all $t\in[0,M]$},\label{eq:CM1}\\
t^q&\le C g(q,t)\quad\text{for all $q\in[1,2]$ and for all $t\in[M,\infty)$}.\label{eq:CM2}
\end{align}

% More precisely, we have
% \begin{align*}
% t^2&\le 2\max\{1,M\}g(q,t)\quad\text{for all $q\in[1,2]$ and for all $t\in[0,M]$},\\
% t^q&\le 2\max\{1,M^{-1}\}g(q,t)\quad\text{for all $q\in[1,2]$ and for all $t\in[M,\infty)$},
% \end{align*}
% see additional details file

\begin{theorem}\label{thm:gpxrig}
Let $\Omega\subset\R^n$ be a bounded domain with Lipschitz boundary and let $p\in\mathcal P_b(\Omega)$. Assume that
$$1<p^-\le p^+\le 2.$$
There exists a constant $C=C(\Omega,n)>0$ such that for all $u\in W^{1,p(\cdot)}(\Omega;\R^n)$ there exists a constant rotation $R\in SO(n)$ satisfying
\begin{equation}\label{eq:gpxrig}
\int_\Omega g\big(p(x),|\nabla u(x)-R|\big)\,\de x\le C\int_\Omega g\big(p(x),d(\nabla u(x),SO(n))\big)\,\de x.
\end{equation}
\end{theorem}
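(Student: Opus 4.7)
The plan is to mimic the proof of \cite[Lemma 3.1]{AgDMDS} and reduce to the classical $L^2$-rigidity of~\cite{FJM}. All the properties of $g$ that will be used are uniform in $q \in [1,2]$, so the resulting constant will depend only on $\Omega$ and $n$; in particular no continuity of $p(\cdot)$ is required.

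First I would fix the universal threshold $\lambda := 2\sqrt{n}$ and apply Lemma~\ref{lem:lusin} to obtain a Lipschitz function $v\colon\Omega\to\R^n$ with $\|\nabla v\|_{L^\infty(\Omega)}\le C\lambda$ and, by property (iii),
$$|\{u\neq v\}|\le \frac{C}{\lambda}\int_{\{|\nabla u|>\lambda\}}|\nabla u(z)|\,\de z,$$
with $C=C(\Omega,n)$. Since $v\in W^{1,2}(\Omega;\R^n)$, the classical $p=2$ rigidity of~\cite{FJM} produces $R\in SO(n)$ with
$$\int_\Omega |\nabla v-R|^2\,\de x\le C\int_\Omega d^2(\nabla v,SO(n))\,\de x.$$
As both $|\nabla v-R|$ and $d(\nabla v,SO(n))$ are pointwise bounded by a universal constant $M=M(n)$, applying \eqref{eq:CM1} with this $M$ (whose constant depends only on $M$) converts the $L^2$-inequality into
$$\int_\Omega g(p(x),|\nabla v-R|)\,\de x\le C\int_\Omega g\bigl(p(x),d(\nabla v,SO(n))\bigr)\,\de x.$$

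The remaining work is to replace $v$ by $u$ on both sides, using the decomposition $\Omega=\{u=v\}\cup\{u\neq v\}$. On $\{u=v\}$ one has $\nabla u=\nabla v$ and the corresponding $g$-integrals coincide. On $\{u\neq v\}$, both $d(\nabla v,SO(n))$ (bounded by $C\lambda+\sqrt{n}$) and $|\nabla u-\nabla v|\le |\nabla u|+C\lambda$ have to be dominated. The key observation is that on $\{|\nabla u|>\lambda\}\supseteq\{u\neq v\}$ (up to a null set) the choice $\lambda=2\sqrt{n}$ gives $d(\nabla u,SO(n))\ge |\nabla u|-\sqrt{n}\ge \lambda/2\ge 1$, so by \eqref{eq:CM2} applied with threshold $1$ one obtains $|\nabla u|\le 2d(\nabla u,SO(n))\le C\,g(p(x),d(\nabla u,SO(n)))$ pointwise on that set. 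Combining this with the measure estimate of Lemma~\ref{lem:lusin}(iii) yields both
$$|\{u\neq v\}|\le C\int_\Omega g\bigl(p(x),d(\nabla u,SO(n))\bigr)\,\de x\quad\text{and}\quad \int_{\{u\neq v\}} g(p(x),|\nabla u|)\,\de x\le C\int_\Omega g\bigl(p(x),d(\nabla u,SO(n))\bigr)\,\de x.$$

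Finally, writing $\nabla u-R=(\nabla v-R)+(\nabla u-\nabla v)$ and invoking the subadditivity \eqref{eq:gqst} of $g(q,\cdot)$,
$$g(p(x),|\nabla u-R|)\le 2\,g(p(x),|\nabla v-R|)+2\,g(p(x),|\nabla u-\nabla v|),$$
the first summand is controlled by the displayed estimate above, while the second vanishes on $\{u=v\}$ and on $\{u\neq v\}$ is bounded, via another application of \eqref{eq:gqst} and of $g(q,C\lambda)\le C(n)$, by a multiple of $g(p(x),|\nabla u|)+\chi_{\{u\neq v\}}$, whose integral we have just controlled. The main delicate point I anticipate is making sure that every constant above depends only on $\Omega$ and $n$: this is handled by fixing $\lambda$ as a function of $n$ alone, invoking \eqref{eq:CM1}--\eqref{eq:CM2} only with universal thresholds, and using the fact that the constant $2$ in \eqref{eq:gqst} is independent of $q\in[1,2]$.
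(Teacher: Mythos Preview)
Your strategy coincides with the paper's: truncate via Lemma~\ref{lem:lusin} at the fixed level $\lambda=2\sqrt n$, apply the classical $L^2$ rigidity of~\cite{FJM} to the Lipschitz approximation $v$, and then compare $u$ with $v$ using the uniform properties~\eqref{eq:gest}--\eqref{eq:CM2} of $g$. The organisation is even slightly cleaner than the paper's, which passes through the mixed quantities $|\nabla u-\nabla v|^{p(x)}$ and $|\nabla v-R|^2$ instead of keeping everything phrased in terms of $g$.

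There is, however, one genuine slip. You assert that $\{u\neq v\}\subseteq\{|\nabla u|>\lambda\}$ up to a null set, and you use this to deduce the pointwise bound $g(p(x),|\nabla u|)\le C\,g(p(x),d(\nabla u,SO(n)))$ on all of $\{u\neq v\}$. That inclusion is false in general: Lemma~\ref{lem:lusin}(ii) only gives $\{u\neq v\}\subseteq\{M_\Omega(\nabla u)>\lambda\}$, and a point can lie in the bad set while having $|\nabla u|$ small there. The repair is immediate and uses nothing new: split $\{u\neq v\}$ according to whether $|\nabla u|\le\lambda$ or $|\nabla u|>\lambda$. On the first piece $g(p(x),|\nabla u|)\le g(2,\lambda)\le C(n)$, and its integral is controlled by the measure bound $|\{u\neq v\}|\le C\int_\Omega g(p(x),d(\nabla u,SO(n)))\,\de x$ that you already derived from Lemma~\ref{lem:lusin}(iii). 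On the second piece your pointwise argument via~\eqref{eq:CM2} applies verbatim. With this correction the proof goes through, and all constants remain dependent only on $\Omega$ and $n$.
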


\begin{proof}
Let $u\in W^{1,p(\cdot)}(\Omega;\R^n)$. Then $u\in W^{1,1}(\Omega;\R^n)$ and for $\lambda=2\sqrt{n}$ we consider the Lipschitz function $v\colon \Omega\to \R^n$ provided by Lemma~\ref{lem:lusin}. Let $R\in SO(n)$ be the constant rotation associated to $v$ given by the rigidity result for $p=2$, see~\cite[Theorem~3.1]{FJM}. It holds that 
\begin{equation}\label{eq:rigest3}
\int_\Omega |\nabla v(x)-R|^2\,\de x\le C\int_\Omega d(\nabla v(x),SO(n))^2\,\de x
\end{equation}
for a constant $C=C(\Omega,n)>0$. 

By the monotonicity of $t\mapsto g(p(x),t)$,~\eqref{eq:gest}, and~\eqref{eq:gqst}, for a.e.\ $x\in\Omega$ we have
\begin{equation}\label{eq:rigest1}
\begin{aligned}
g(p(x),|\nabla u(x)-R|)&\le g\big(p(x),|\nabla u(x)-\nabla v(x)|+|\nabla v(x)-R|\big)\\
&\le 2\left(|\nabla u(x)-\nabla v(x)|^{p(x)}+|\nabla v(x)-R|^2\right).
\end{aligned}
\end{equation}
Let us consider the first term in the right-hand side of~\eqref{eq:rigest1}. We claim that there exists a constant $C=C(\Omega,n)>0$ such that
\begin{equation*}
\int_\Omega |\nabla u(x)-\nabla v(x)|^{p(x)}\,\de x\le C \int_{\{x\in\Omega\,:\, |\nabla u(x)|>2\sqrt{n}\}} |\nabla u(x)|^{p(x)}\,\de x.
\end{equation*}
Indeed, by (i) and (iii) of Lemma~\ref{lem:lusin} we have
\begin{align*}
&\int_\Omega |\nabla u(x)-\nabla v(x)|^{p(x)}\,\de x\\
&=\int_{\{x\in\Omega\,:\,u(x)\neq v(x)\}}|\nabla u(x)-\nabla v(x)|^{p(x)}\,\de x\\
&\le 2\int_{\{x\in\Omega\,:\,u(x)\neq v(x)\}}(|\nabla u(x)|^{p(x)}+|\nabla v(x)|^{p(x)})\,\de x \\
&\le 2\int_{\{x\in\Omega\,:\, |\nabla u(x)|>2\sqrt{n}\}} |\nabla u(x)|^{p(x)}\,\de x +2\int_{\{x\in\Omega\,:\,u(x)\neq v(x)\}}(2\sqrt{n})^{p(x)}(C^{p(x)}+1)\,\de x\\
&\le 2\int_{\{x\in\Omega\,:\, |\nabla u(x)|>2\sqrt{n}\}} |\nabla u(x)|^{p(x)}\,\de x+2(2\sqrt{n})^2(1+C^2)|\{x\in\Omega\,:\,u(x)\neq v(x)\}|\\
&\le 2\int_{\{x\in\Omega\,:\, |\nabla u(x)|>2\sqrt{n}\}} |\nabla u(x)|^{p(x)}\,\de x+2(2\sqrt{n})^2(1+C^2)\int_{\{x\in\Omega\,:\, |\nabla u(x)|>2\sqrt{n}\}} \left|\frac{\nabla u(x)}{2\sqrt{n}}\right|^{p(x)}\,\de x\\
&\le \left(2+ 8 n (1+C^2)\right)\int_{\{x\in\Omega\,:\, |\nabla u(x)|>2\sqrt{n}\}} |\nabla u(x)|^{p(x)}\,\de x.
\end{align*}
For a.e.\ $x\in\Omega$ let $S(x)\in SO(n)$ be such that
\begin{equation}\label{eq:S}
|\nabla u(x)-S(x)|=d(\nabla u(x),SO(n)).
\end{equation}
Then, for a.e.\ $x\in\{x\in\Omega\,:\, |\nabla u(x)|>2\sqrt{n}\}$ we have $d(\nabla u(x),SO(n))=|\nabla u(x)-S(x)|\ge \sqrt{n}$, which by~\eqref{eq:CM2} gives 
\begin{align*}
|\nabla u(x)|^{p(x)}&\le 2\big(|\nabla u(x)-S(x)|^{p(x)}+\sqrt{n}^{p(x)}\big)\\
&\le 4d(\nabla u(x),SO(n))^{p(x)}\le Cg\big(p(x),d(\nabla u(x),SO(n))\big)
\end{align*}
for a constant $C=C(n)>0$. Therefore, we obtain
\begin{equation}\label{eq:rigest2}
\int_\Omega |\nabla u(x)-\nabla v(x)|^{p(x)}\,\de x\le C\int_\Omega g\big(p(x),d(\nabla u(x),SO(n))\big)\,\de x
\end{equation}
for a constant $C=C(\Omega,n)>0$

It remains to consider the second term in the right-hand side of~\eqref{eq:rigest1}. We claim that
\begin{equation}\label{eq:rigest4}
d(\nabla v(x),SO(n))^2\le C\left(|\nabla u(x)-\nabla v(x)|^{p(x)}+g\big(p(x),d(\nabla u(x),SO(n))\big)\right)
\end{equation}
for a constant $C=C(\Omega,n)>0$. For a.e.\ $x\in\Omega$ we take $S(x)\in SO(n)$ which satisfies~\eqref{eq:S}. We have two cases, namely (i) $|\nabla u(x)-S(x)|\le 1$ and (ii) $|\nabla u(x)-S(x)|> 1$. 
 
(i) Let $x\in\Omega$ be such that $|\nabla u(x)-S(x)|\le 1$. Then
 $$|\nabla u(x)-\nabla v(x)|\le |\nabla u(x)-S(x)|+| S(x) |+|\nabla v(x)|\le 1+\sqrt{n}+2\sqrt{n}C.$$
 Therefore, there exists a constant $C=C(\Omega,n)>0$ such that 
 $$|\nabla u(x)-\nabla v(x)|^2\le C |\nabla u(x)-\nabla v(x)|^{p(x)},$$
 and by~\eqref{eq:CM1} we conclude that 
 \begin{align*}
 d(\nabla v(x),SO(n))^2&\le |\nabla v(x)-S(x)|^2\le 2|\nabla u(x)-\nabla v(x)|^2+2d(\nabla u(x),SO(n))^2\\
 &\le C \left(|\nabla u(x)-\nabla v(x)|^{p(x)}+g(p(x),d(\nabla u(x),SO(n)))\right)
 \end{align*}
 for a constant $C=C(\Omega,n)>0$. 

(ii) Let $x\in\Omega$ such that $|\nabla u(x)-S(x)|> 1$. Since
 $$|\nabla v(x)-S(x)|^2\le 2|\nabla v(x)|^2+2n\le 8C^2n+2n,$$
 by~\eqref{eq:CM2} we get
 \begin{align*}
 d(\nabla v(x),SO(n))^2&\le |\nabla v(x)-S(x)|^2\le (8C^2n+2n)|\nabla v(x)-S(x)|^{p(x)}\\
 &\le C \left(|\nabla u(x)-\nabla v(x)|^{p(x)}+g(p(x),d(\nabla u(x),SO(n)))\right)
 \end{align*}
 for a constant $C=C(\Omega,n)>0$. This shows~\eqref{eq:rigest4}. By combining~\eqref{eq:rigest3},~\eqref{eq:rigest1},~\eqref{eq:rigest2}, and~\eqref{eq:rigest4} we get~\eqref{eq:gpxrig}.
\end{proof}

\begin{remark}\label{rem:norm-mod}
As we already discussed in Remark~\ref{rem:on condition (ii)}, condition (iii) of Lemma~\ref{lem:lusin} has to be used in the above proof. Notice that conditions (i) and (ii) therein are actually not sufficient to this aim, as continuity of the maximal operator in variable Lebesgue spaces can only be formulated in terms of the corresponding norms and not as an integral inequality. 
\end{remark}

As a consequence of the $g$-rigidity result, we obtain the following result.

\begin{lemma}\label{lem:IRest}
Let $\Omega\subset\R^n$ be a bounded domain with Lipschitz boundary and let $p\in\mathcal P_b(\Omega)$ with 
$$1<p^-\le p^+\le 2.$$
Assume that $W$ satisfies \emph{(i)--(iv)}. Let $\varepsilon>0$ and let $u_\varepsilon\in W^{1,p(\cdot)}_h(\Omega;\R^n)$. Let $R_\varepsilon\in SO(n)$ be a constant rotation satisfying~\eqref{eq:gpxrig} with $v_\varepsilon(x):=x+\varepsilon u_\varepsilon(x)$ for a.e.\ $x\in\Omega$. There exists a constant $C=C(\Omega,n)>0$ such that
$$|I-R_\varepsilon|^2\le C\varepsilon^2\left[\mathcal F_\varepsilon(u_\varepsilon)+\left(\int_{\partial_D\Omega}|h|\,\de\mathcal H^{n-1}\right)^2\right].$$
\end{lemma}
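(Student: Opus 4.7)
The plan is to combine Lemma~\ref{lem:poincare2} with the $g$-modular rigidity of Theorem~\ref{thm:gpxrig} and the growth assumption~(iv). Setting $v_\varepsilon(x) = x + \varepsilon u_\varepsilon(x)$, Lemma~\ref{lem:poincare2} yields
\begin{equation*}
|I - R_\varepsilon| \le C \int_\Omega |\nabla v_\varepsilon - R_\varepsilon|\, \de x + C\varepsilon \int_{\partial_D\Omega} |h|\, \de\mathcal H^{n-1}
\end{equation*}
for some $C = C(\Omega, n) > 0$, so the key task is to control the first integral by $\varepsilon^2 \mathcal F_\varepsilon(u_\varepsilon)$.

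To this end I would split $\Omega = A_\varepsilon \cup A_\varepsilon^c$ with $A_\varepsilon := \{x \in \Omega : |\nabla v_\varepsilon(x) - R_\varepsilon| \le 1\}$. On $A_\varepsilon$ the definition~\eqref{eq:g} gives $|\nabla v_\varepsilon - R_\varepsilon|^2 = 2\, g(p(x), |\nabla v_\varepsilon - R_\varepsilon|)$, so Cauchy--Schwarz bounds $\int_{A_\varepsilon} |\nabla v_\varepsilon - R_\varepsilon|\, \de x$ by $|\Omega|^{1/2} (2 \int_\Omega g(p(x), |\nabla v_\varepsilon - R_\varepsilon|)\, \de x)^{1/2}$. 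On $A_\varepsilon^c$, instead, one uses the identity $p(x)\, g(p(x),t) = t^{p(x)} + p(x)/2 - 1$, together with $t \le t^{p(x)}$ for $t \ge 1$ and $g(p(x),t) \ge 1/2$ for $t \ge 1$, to obtain the pointwise bound $|\nabla v_\varepsilon - R_\varepsilon| \le 4\, g(p(x), |\nabla v_\varepsilon - R_\varepsilon|)$, hence
\begin{equation*}
\int_{A_\varepsilon^c} |\nabla v_\varepsilon - R_\varepsilon|\, \de x \le 4 \int_\Omega g(p(x), |\nabla v_\varepsilon - R_\varepsilon|)\, \de x.
\end{equation*}
At this stage Theorem~\ref{thm:gpxrig} (applied to $v_\varepsilon$, with the very same rotation $R_\varepsilon$ as in the hypothesis) together with assumption~(iv) gives
\begin{equation*}
\int_\Omega g(p(x), |\nabla v_\varepsilon - R_\varepsilon|)\, \de x \le C \int_\Omega g(p(x), d(\nabla v_\varepsilon, SO(n)))\, \de x \le \int_\Omega W(x, \nabla v_\varepsilon)\, \de x = \varepsilon^2 \mathcal F_\varepsilon(u_\varepsilon),
\end{equation*}
and combining with the two bounds above yields $\int_\Omega |\nabla v_\varepsilon - R_\varepsilon|\, \de x \le C \varepsilon \sqrt{\mathcal F_\varepsilon(u_\varepsilon)} + C \varepsilon^2 \mathcal F_\varepsilon(u_\varepsilon)$.

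Squaring the inequality produced by Lemma~\ref{lem:poincare2} and using $(a+b+c)^2 \le 3(a^2+b^2+c^2)$ produces the desired estimate up to a harmless extra term $C\varepsilon^4 \mathcal F_\varepsilon(u_\varepsilon)^2$. The main (and only) delicate point is absorbing this quartic term, which I would handle by a dichotomy on the size of $\varepsilon^2 \mathcal F_\varepsilon(u_\varepsilon)$: if this quantity is bounded by $1$, then $\varepsilon^4 \mathcal F_\varepsilon(u_\varepsilon)^2 \le \varepsilon^2 \mathcal F_\varepsilon(u_\varepsilon)$ and the claim is immediate; otherwise, the trivial bound $|I - R_\varepsilon|^2 \le 4n$ (valid since $I, R_\varepsilon \in SO(n)$) gives $|I - R_\varepsilon|^2 \le 4n\, \varepsilon^2 \mathcal F_\varepsilon(u_\varepsilon)$, once again yielding the conclusion.
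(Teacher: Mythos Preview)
Your proof is correct and ends with the same dichotomy on the size of $\varepsilon^2\mathcal F_\varepsilon(u_\varepsilon)$ as the paper, but the way you pass from the $L^1$ quantity $\int_\Omega|\nabla v_\varepsilon-R_\varepsilon|\,\de x$ to the $g$-modular is different. You split $\Omega$ into $\{|\nabla v_\varepsilon-R_\varepsilon|\le 1\}$ and its complement and use Cauchy--Schwarz on the first set and the pointwise bound $t\le 4\,g(q,t)$ for $t\ge 1$ on the second; this yields $\int_\Omega|\nabla v_\varepsilon-R_\varepsilon|\,\de x\le C\varepsilon\sqrt{\mathcal F_\varepsilon(u_\varepsilon)}+C\varepsilon^2\mathcal F_\varepsilon(u_\varepsilon)$, and the extra quartic term $\varepsilon^4\mathcal F_\varepsilon(u_\varepsilon)^2$ after squaring is then absorbed by the dichotomy. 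The paper instead exploits the convexity of $t\mapsto g(p^-,t)$ together with the monotonicity $g(p^-,\cdot)\le g(p(x),\cdot)$: by Jensen,
\[
g\Big(p^-,\tfrac{1}{|\Omega|}\!\int_\Omega|\nabla v_\varepsilon-R_\varepsilon|\,\de x\Big)\le \tfrac{1}{|\Omega|}\!\int_\Omega g\big(p(x),|\nabla v_\varepsilon-R_\varepsilon|\big)\,\de x\le C\varepsilon^2\mathcal F_\varepsilon(u_\varepsilon),
\]
and then the dichotomy is run directly on whether the $L^1$ average exceeds $1$ (so that $g(p^-,\cdot)$ equals $t^2/2$ or not). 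The Jensen route is a bit shorter and avoids producing the quartic term in the first place; your splitting is slightly more elementary in that it never leaves the variable exponent $p(x)$ for $p^-$. One cosmetic point: in your displayed chain the constant $C$ from Theorem~\ref{thm:gpxrig} should persist through to $C\varepsilon^2\mathcal F_\varepsilon(u_\varepsilon)$ rather than $\varepsilon^2\mathcal F_\varepsilon(u_\varepsilon)$, but this does not affect the argument.
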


\begin{proof}
 By using~\eqref{eq:gpxrig}, property (iv) of $W$, and Jensen's inequality we have
\begin{align*}
g\left(p^-,\frac{1}{|\Omega|}\int_\Omega|\nabla v_\varepsilon(x)-R_\varepsilon|\,\de x\right)&\le \frac{1}{|\Omega|}\int_\Omega g\big(p^-,|\nabla v_\varepsilon(x)-R_\varepsilon|\big)\,\de x\\
&\le \frac{1}{|\Omega|}\int_\Omega g\big(p(x),|\nabla v_\varepsilon(x)-R_\varepsilon|\big)\,\de x\le C \varepsilon^2\mathcal F_\varepsilon(u_\varepsilon).
\end{align*}
We now use Lemma~\ref{lem:poincare2} to deduce that
\begin{align*}
|I-R_\varepsilon|^2\le C\left[\left(\frac{1}{|\Omega|}\int_\Omega|\nabla v_\varepsilon(x)-R_\varepsilon|\,\de x\right)^2+\varepsilon^2\left(\int_{\partial_D\Omega}|h|\,\de\mathcal H^{n-1}\right)^2\right].
\end{align*}
If $ \frac{1}{|\Omega|} \int_\Omega|\nabla v_\varepsilon(x)-R_\varepsilon|\,\de x\le 1$ we conclude by the definition of $g$, while if $ \frac{1}{|\Omega|} \int_\Omega|\nabla v_\varepsilon(x)-R_\varepsilon|\,\de x>1$, then $C\varepsilon^2\mathcal F_\varepsilon(u_\varepsilon)>\frac{1}{2}$ and we conclude by observing that $|I-R_\varepsilon|^2\le 4n$.
\end{proof}

Based on this, we can give the proof of Proposition~\ref{lem:equicoerv}. 

\begin{proof}[Proof of Proposition~\ref{lem:equicoerv}]
For all $\varepsilon\in(0,1)$ let $R_\varepsilon\in SO(n)$ be the constant matrix given by~\eqref{eq:gpxrig} for the function $v_\varepsilon(x):=x+ \varepsilon u_\varepsilon(x)$ for a.e.\ $x\in\Omega$. By using~\eqref{eq:gest},~\eqref{eq:gqst}, and the monotonicity of $g$ we have
\begin{align*}
\int_\Omega g\big(p(x),|\varepsilon\nabla u_\varepsilon(x)|\big)\,\de x&\le \int_\Omega g\big(p(x),|\nabla v_\varepsilon(x)-R_\varepsilon|+|I-R_\varepsilon|\big)\,\de x\\
&\le 2\int_\Omega g\big(p(x),|\nabla v_\varepsilon(x)-R_\varepsilon|\big)\,\de x+2|\Omega||I-R_\varepsilon|^2\\
&\le C\left(\int_\Omega g\big(p(x),d(\nabla v_\varepsilon(x),SO(n))\big)\,\de x+|I-R_\varepsilon|^2\right),
\end{align*}
for a constant $C=C(\Omega,n)>0$. By Lemma~\ref{lem:IRest} and assumption (iv) of $W$, we can find another constant $C=C(\Omega,n)>0$ such that
\begin{align}\label{eq: LLLLLLLL}
\int_\Omega g(p(x),|\varepsilon\nabla u_\varepsilon(x)|)\,\de x&\le C\varepsilon^2\left[\mathcal F_\varepsilon(u_\varepsilon)+\left(\int_{\partial_D\Omega}|h|\,\de\mathcal H^{n-1}\right)^2
\right].
\end{align}
In particular, this implies 
$$\int_{\{x\in\Omega\,:\,|\varepsilon\nabla u_\varepsilon(x)|\le 1\}}|\varepsilon\nabla u_\varepsilon(x)|^2\,\de x\le 2C\varepsilon^2\left[\mathcal F_\varepsilon(u_\varepsilon)+\left(\int_{\partial_D\Omega}|h|\,\de\mathcal H^{n-1}\right)^2
\right].$$
Since $t^q\le t^2+1$ for all $q\in[1,2]$ and $t\in[0,\infty)$, we deduce that there exists a constant $C=C(\Omega,n)>0$ such that
\begin{equation}\label{eq:coerc1}
\int_{\{x\in\Omega\,:\,|\varepsilon\nabla u_\varepsilon(x)|\le 1\}}|\nabla u_\varepsilon(x)|^{p(x)}\,\de x\le C\left[1+\mathcal F_\varepsilon(u_\varepsilon)+\left(\int_{\partial_D\Omega}|h|\,\de\mathcal H^{n-1}\right)^2\right]
\end{equation}
for all $\varepsilon\in(0,1)$. 

On the other hand, by~\eqref{eq: LLLLLLLL} we have
\begin{align*}
\varepsilon^{p^+}\int_{\{x\in\Omega\,:\,|\varepsilon\nabla u_\varepsilon(x)|> 1\}}|\nabla u_\varepsilon(x)|^{p(x)}\,\de x&\le \int_{\{x\in\Omega\,:\,|\varepsilon\nabla u_\varepsilon(x)|> 1\}}|\varepsilon\nabla u_\varepsilon(x)|^{p(x)}\,\de x\\
&\le 2C\varepsilon^2\left[\mathcal F_\varepsilon(u_\varepsilon)+\left(\int_{\partial_D\Omega}|h|\,\de\mathcal H^{n-1}\right)^2\right],
\end{align*}
which gives 
\begin{equation}\label{eq:coerc2}
\int_{\{x\in\Omega\,:\,|\varepsilon\nabla u_\varepsilon(x)|> 1\}}|\nabla u_\varepsilon(x)|^{p(x)}\,\de x\le C\left[ \mathcal F_\varepsilon(u_\varepsilon) +\left(\int_{\partial_D\Omega}|h|\,\de\mathcal H^{n-1}\right)^2\right]
\end{equation}
for a constant $C=C(\Omega,n)>0$ and for all $\varepsilon\in(0,1)$. By combining~\eqref{eq:coerc1} and~\eqref{eq:coerc2} we deduce~\eqref{eq:equicoer}.
\end{proof}

Let us finally come to the proof of Theorem~\ref{thm:gamma_con}. Since $g(p(x),t)\ge g(p^-,t)$ for a.e.\ $x\in \Omega$ and every $t\in[0,\infty)$ and $1<p^-\le 2$, we can apply~\cite[Theorem~2.4]{AgDMDS} to deduce that ${\mathcal F}_{\varepsilon_j}\xrightarrow{\Gamma}\mathcal{F}\quad\text{as $j\to \infty$}$ in the weak topology of $W^{1,p^-}_h(\Omega;\R^n)$. More precisely, in~\cite{AgDMDS} it has been shown that for each sequence $\lbrace u_{j}\rbrace_j \subset W^{1,p^-}_h(\Omega;\R^n)$ with $u_j \rightharpoonup u$ weakly in $ W^{1,p^-}(\Omega;\R^n)$ we have
\begin{align}\label{gamma1}
\liminf_{j \to \infty} {\mathcal F}_{\varepsilon_j}(u_j) \ge \mathcal{F}(u), 
\end{align}
and for every $u \in W^{1,2}_h(\Omega;\R^n)$ we find a sequence $\lbrace u_j\rbrace_j\subset W^{1,2}_h(\Omega;\R^n)$ converging strongly in $W^{1,2}(\Omega;\R^n)$ such that 
\begin{align}\label{gamma2}
\lim_{j \to \infty} {\mathcal F}_{\varepsilon_j}(u_j) = \mathcal{F}(u). 
\end{align}

\begin{proof}[Proof of Theorem~\ref{thm:gamma_con}]
The $\Gamma$-liminf inequality in the weak topology of $W^{1,p(\cdot)}_h(\Omega;\R^n)$ follows from \eqref{gamma1} and the weak continuity of the embedding $W^{1,p(\cdot)}(\Omega;\R^n)\subseteq W^{1,p^-}(\Omega;\R^n)$. The $\Gamma$-limsup inequality follows from~\eqref{gamma2} and the fact that the strong topology of $W^{1,2}(\Omega;\R^n)$ is stronger than the weak topology of $W^{1,p(\cdot)}_h(\Omega;\R^n)$. 
\end{proof} 

%\section*{Statements and declarations}
%
%\noindent  {\bf Conflict of interest.} The authors have no conflicts of interest to declare.
%
%\smallskip
%
%\noindent {\bf Data availability.} The authors do not analyse or generate any datasets.
%
%\smallskip
%
%\noindent  {\bf Acknowledgments.} This work was supported by the Austrian Science Fund through the projects ESP-61 and P35359-N, by the DFG projects FR 4083/3-1, FR 4083/5-1, by the Deutsche Forschungsgemeinschaft (DFG, German Research Foundation) under Germany's Excellence Strategy EXC 2044 -390685587, Mathematics M\"unster: Dynamics--Geometry--Structure, by the Italian Ministry of Education and Research through the PRIN 2017 project No. 2017BTM7SN, and by project Starplus 2020 Unina Linea 1 "New challenges in the variational modeling of continuum mechanics" from the University of Naples Federico II and Compagnia di San Paolo. Finally, M.C.\ and F.S.\ are partially supported by Gruppo Nazionale per l’Analisi Matematica, la Probabilità e le loro Applicazioni (GNAMPA-INdAM). 

\section*{Acknowledgments}
This work was supported by the Austrian Science Fund through the projects ESP-61 and P35359-N, by the DFG projects FR 4083/3-1, FR 4083/5-1, by the Deutsche Forschungsgemeinschaft (DFG, German Research Foundation) under Germany's Excellence Strategy EXC 2044 -390685587, Mathematics M\"unster: Dynamics--Geometry--Structure, by the Italian Ministry of Education and Research through the PRIN 2017 project No. 2017BTM7SN, and by project Starplus 2020 Unina Linea 1 "New challenges in the variational modeling of continuum mechanics" from the University of Naples Federico II and Compagnia di San Paolo. Finally, M.C.\ and F.S.\ are partially supported by Gruppo Nazionale per l’Analisi Matematica, la Probabilitá e le loro Applicazioni (GNAMPA-INdAM).

\end{document}